\documentclass[12pt]{article}

\usepackage{amsthm,amsmath,stmaryrd,bbm,geometry,color}
\usepackage{amssymb}
\usepackage[english]{babel}
\usepackage[utf8]{inputenc}
\usepackage{graphicx}
\usepackage{verbatim}
\usepackage{enumitem}
\usepackage{mathtools}
\usepackage[titletoc]{appendix}
\usepackage{bm}
\usepackage{authblk}

\setcounter{tocdepth}{2}
\geometry{hscale=0.8,vscale=0.85,centering}
%
\usepackage[dvipsnames]{xcolor}
\usepackage[hyperindex=true,frenchlinks=true,colorlinks=true,
citecolor=Mahogany,linkcolor=Red,urlcolor=Tan,linktocpage]{hyperref}

\usepackage{tikz}
\usetikzlibrary{shapes}
\usetikzlibrary{fit}
\usetikzlibrary{decorations.pathmorphing}


\linespread{1.15}

\title{Deviation inequality for Banach-valued
orthomartingales}
\author{Davide Giraudo}

\date{\today}
\affil[$\dagger$]{Institut de Recherche Mathématique Avancée
UMR 7501, Université de Strasbourg and CNRS
7 rue René Descartes
67000 Strasbourg, France}

\numberwithin{equation}{section}
\setcounter{tocdepth}{2}
\renewcommand{\leq}{\leqslant}
\renewcommand{\geq}{\geqslant}

\newtheorem{Theorem}{Theorem}[section]
\newtheorem{Proposition}[Theorem]{Proposition}
\newtheorem{Lemma}[Theorem]{Lemma}
\newtheorem{Definition}[Theorem]{Definition}
\newtheorem{Corollary}[Theorem]{Corollary}


\theoremstyle{remark}
\newtheorem{Remark}[Theorem]{Remark}

\tikzstyle{Vertex}=[circle,draw=LimeGreen!80,fill=LimeGreen!8,
inner sep=1pt,minimum size=2mm,line width=1pt,font=\scriptsize]
\tikzstyle{Node}=[Vertex,draw=RoyalBlue!80,fill=RoyalBlue!8,inner sep=1.5pt]
\tikzstyle{Leaf}=[rectangle,draw=Black!70,fill=Black!16,
inner sep=0pt,minimum size=1mm,line width=1.25pt]
\tikzstyle{Edge}=[Maroon!80,cap=round,line width=1pt]
\tikzstyle{Mark1}=[draw=BrickRed!80,fill=BrickRed!8]
\tikzstyle{Mark2}=[draw=BurntOrange!80,fill=BurntOrange!8]
\tikzstyle{EdgeRew}=[->,RedOrange!80,cap=round,thick]


\newcommand{\Bca}{\mathcal{B}}

\newcommand{\Fca}{\mathcal{F}}
\newcommand{\Gca}{\mathcal{G}}
\newcommand{\Hca}{\mathcal{H}}

\newcommand{\B}{\mathbb{B}}
\newcommand \ens[1]{\left\{ #1\right\}}

\newcommand \R{\mathbb R}

\newcommand \N{\mathbb N}
\newcommand \PP{\mathbb P}
\newcommand{\el}{\mathbb L}
\newcommand{\E}[1]{\mathbb E\left[#1\right]}

\newcommand \Z{\mathbb Z}

\newcommand \abs[1]{\left|#1\right|}
\newcommand \eps{\varepsilon}

\newcommand{\D}[1]{\mathrm{d}#1}

\newcommand{\pr}[1]{\left(#1\right)}
\newcommand{\norm}[1]{\left\lVert #1 \right\rVert}
\newcommand{\gr}[1]{\bm{#1}}
\newcommand{\imd}{\preccurlyeq}
\newcommand{\smd}{\succcurlyeq}
\newcommand{\grn}{\gr{n}}
\newcommand{\grN}{\gr{N}}
\newcommand{\gru}{\gr{u}}
\newcommand{\gri}{\gr{i}}
\newcommand{\grj}{\gr{j}}
\newcommand{\conv}{\underset{\operatorname{conv}}{\leq}}
\newcommand{\1}[1]{\mathbf{1}\ens{#1}}
\newcommand{\ipr}[1]{\mathbf{1}\pr{#1}}

\newcommand{\ind}[1]{\mathbf{1}_{#1}}
\newcommand{\tail}[1]{\operatorname{Tail}\pr{#1}}

\begin{document}

\maketitle

\begin{abstract}
 We show a deviation inequality inequalities for multi-indexed martingale  We then provide applications to kernel
regression for random fields and rates in the law of large numbers for orthomartingale difference random fields.
\end{abstract}
 
\section{Deviation inequalities for orthomartingale difference random fields}

\subsection{Introduction, motivations and summary of the contribution of the 
paper}

Giving a bound on the tail of a random variable is a fundamental tool in order 
to measure the rates of convergence of a collection of random elements, for 
example in the context of the strong law of large numbers. Such inequalities 
can also be used in order to check tightness criterion. Therefore, a lot of 
attention has been given to the obtention of probability inequalities: in the 
independent case \cite{MR669054,MR144363}, mixing case (chapter 6 in 
\cite{MR2117923}), functions of an i.i.d. sequence \cite{MR3114713} or 
martingales \cite{MR2021875,MR3579898,MR3311214}

In this paper, we will focus on orthomartingale difference random fields, that 
is, a special case of multi-indexed martingale difference random which allows 
to exploit unidimensional martingale difference properties 
and use arguments based on induction on the dimension. 
When the increments are form a strictly stationary random 
field, most of the main limit theorems have been investigated: the central 
limit theorem \cite{MR3427925,MR3913270}, quenched versions 
of the functional central limit theorem \cite{MR4125956} 
and the law of the iterated logarithms \cite{MR4186670}. 
However, the law of large numbers has not been as much investigated as much as 
the other limit theorems. 

We will bring the following contribution to the study of orthomartingale 
difference random fields.
\begin{enumerate}
 \item We establish a deviation inequality for orthomartingale difference 
random field takings values in a separable Banach space that have to satisfy 
some smothness assumptions. Note that we do not assume the random field to 
be identically distributed.
\item Since the random field into consideration can 
have any marginal distribution, provided that it possesses 
the orthomartingale structure, we can consider weighted sums 
of such random fields. This gives the possibility to provide 
applications to regression models.
\item We give also an optimal sufficient condition for 
the law of large numbers of an identically distributed 
orthomartingale difference random field.
\end{enumerate}

\subsection{A deviation inequality for orthomartingale 
difference random fields}

Given a real-valued martingale difference sequence $\pr{D_j,\Fca_{j}}$, it
is possible to control the tail of $\max_{1\leq n\leq N}\abs{\sum_{j=1}^nD_j}$
by a functional of the tails of $\max_{1\leq j\leq N}\abs{D_j}$ and those of
the predictable quadratic variance $\sum_{j=1}^N\E{D_j^2\mid\mathcal F_{j-1}}$,
see \cite{MR2021875}. An analoguous result has been obtained in
\cite{MR4046858} for martingales with values in a smooth Banach space which may
not have a finite moment of order two.

In order to define orthomartingales, we first need to define an order relation
on $\Z^d$. It turns out that the most conveniant one is the coordinatewise
order.
For $\gri=\pr{i_\ell}_{\ell=1}^d,\grj=\pr{j_\ell}_{\ell=1}^d\in\Z^d$, we say
that $\gri\imd\grj$ if for each $\ell\in\ens{1,\dots,d}$, $i_\ell\leq j_\ell$.
Once this order is defined, we can introduce the concept of completely commuting
filtrations.
\begin{Definition}\label{dfn:commutante}
 We say that a collection of $\sigma$-algebras $\pr{\Fca_{\gri}}_{\gri\in\Z^d}$
is a completely commuting filtration if
\begin{enumerate}
 \item for each $\gri,\grj\in\Z^d$ such that $\gri\imd\grj$,
$\Fca_{\gri}\subset\Fca_{\grj}$ and
 \item for each $Y\in\mathbb L^1$ and each $\gri,\grj\in\Z^d$,
 \begin{equation}\label{eq:def_filtration_commutante}
  \E{\E{Y\mid\Fca_{\gri}}\mid\Fca_{\grj}}=
  \E{Y\mid\Fca_{\min\ens{\gri,\grj}}},
 \end{equation}
 where $\min\ens{\gri,\grj}$ is the element of $\Z^d$ defined as the
 coordinatewise minimum of $\gri$ and $\grj$, that is,
 $\min\ens{\gri,\grj}=\pr{\min\ens{i_\ell,j_\ell}}_{\ell=1}^d$.
\end{enumerate}

\end{Definition}

Let us give two examples of commuting filtrations.
\begin{Proposition}\label{prop:exemple_filtra_commutantes}
 \begin{enumerate}
 \item If $\pr{\eps_{\gru}}_{\gru\in\Z^d}$ is i.i.d., then defining
 $\Fca_{\gri}=\sigma\pr{\eps_{\gru},\gru\in\Z^d,\gru\imd\gri}$, the filtration
 $\pr{\Fca_{\gri}}_{\gri\in\Z^d}$ is completely commuting.
 \item Suppose that $\pr{\Fca^{\pr{\ell}}_{\gri^{\pr{\ell}}
}}_{\gri^{\pr{\ell}} \in \Z^{d_\ell}    }$, $1\leq \ell\leq L$, are completely
commuting filtrations on a probability space $\pr{\Omega,\Fca,\PP}$. Suppose
that for each $\gr{i^{\pr{1}}}\in\Z^{d_1},\dots,\gr{i^{\pr{L}}}\in\Z$, the
$\sigma$-algebras $\Fca^{\pr{\ell}}_{\gri^{\pr{\ell}}
}$, $1\leq \ell\leq L$, are independent. Let $d=\sum_{\ell=1}^Ld_\ell$ and
for $\gri=\pr{i^{\pr{\ell}}}_{\ell=1}^L\in\Z^d$, let
$\Fca_{\gri}=\bigvee_{\ell=1}^L\Fca^{\pr{\ell}}_{\gr{i_\ell}}$. Then
$\pr{\Fca_{\gri}}_{\gri\in\Z^d}$ is completely commuting.
\end{enumerate}
\end{Proposition}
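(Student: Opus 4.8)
The plan is to verify the two axioms of Definition \ref{dfn:commutante} in each case, the monotonicity being in both cases immediate from the way the $\sigma$-algebras are built as increasing unions (or joins) of generating families. The real content is the commutation identity \eqref{eq:def_filtration_commutante}.

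For part (1), I would first reduce to $\gri,\grj$ arbitrary and set $\grk=\min\ens{\gri,\grj}$. Writing $\Fca_{\gri}=\sigma\pr{\eps_{\gru}:\gru\imd\gri}$, the key structural fact is that for any $\gri$ the index set $\ens{\gru:\gru\imd\gri}$ and its complement partition $\Z^d$, and by independence of the $\eps_{\gru}$ the conditional expectation $\E{Y\mid\Fca_{\gri}}$ for $Y$ a bounded measurable function of finitely many coordinates is obtained by integrating out exactly the variables $\eps_{\gru}$ with $\gru\not\imd\gri$ against their law. Iterating, $\E{\E{Y\mid\Fca_{\gri}}\mid\Fca_{\grj}}$ integrates out first the coordinates outside $\ens{\gru\imd\gri}$ and then those outside $\ens{\gru\imd\grj}$; since integrating out a coordinate is idempotent and the order in which one integrates out independent coordinates does not matter, the result is integration over all $\gru$ with $\gru\not\imd\gri$ \emph{or} $\gru\not\imd\grj$, i.e. over all $\gru$ with $\gru\not\imd\grk$. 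That is exactly $\E{Y\mid\Fca_{\grk}}$. A density/monotone class argument in $Y$ then removes the restriction to bounded cylindrical functions and gives \eqref{eq:def_filtration_commutante} for all $Y\in\mathbb L^1$. I would phrase the ``integrate out a coordinate'' step cleanly using the independence lemma: if $\Gca$ and $\Hca$ are independent and $Z$ is $\sigma(\Gca,\Hca)$-measurable and integrable, then $\E{Z\mid\Gca}$ is computed by freezing the $\Gca$-part and averaging the $\Hca$-part.

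For part (2), the argument is a factorization over the $L$ blocks. Fix $\gri=\pr{\gri^{\pr\ell}}_{\ell}$ and $\grj=\pr{\grj^{\pr\ell}}_{\ell}$, so that $\min\ens{\gri,\grj}$ has $\ell$-th block $\min\ens{\gri^{\pr\ell},\grj^{\pr\ell}}$. Since the $\Fca^{\pr\ell}_{\bullet}$ (ranging over all indices within block $\ell$) are mutually independent, it suffices to prove the identity on a generating family of $\Fca$, namely on products $Y=\prod_{\ell=1}^L Y_\ell$ with $Y_\ell$ bounded and $\bigvee_{\gri^{\pr\ell}}\Fca^{\pr\ell}_{\gri^{\pr\ell}}$-measurable, and then extend by linearity and a monotone class argument. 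For such a product, independence gives $\E{Y\mid\Fca_{\gri}}=\prod_{\ell=1}^L\E{Y_\ell\mid\Fca^{\pr\ell}_{\gri^{\pr\ell}}}$, and applying $\E{\cdot\mid\Fca_{\grj}}$ once more factorizes again over $\ell$; within each block we invoke the completely commuting hypothesis on $\pr{\Fca^{\pr\ell}_{\bullet}}$ to collapse $\E{\E{Y_\ell\mid\Fca^{\pr\ell}_{\gri^{\pr\ell}}}\mid\Fca^{\pr\ell}_{\grj^{\pr\ell}}}=\E{Y_\ell\mid\Fca^{\pr\ell}_{\min\ens{\gri^{\pr\ell},\grj^{\pr\ell}}}}$. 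Reassembling the product yields $\E{Y\mid\Fca_{\min\ens{\gri,\grj}}}$, as wanted.

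The main obstacle is purely bookkeeping: making the reduction to a generating class rigorous (why it is enough to check \eqref{eq:def_filtration_commutante} on a multiplicative class generating $\Fca$ and containing constants, using that both sides are $\mathbb L^1$-continuous linear operators that agree on a dense set), and, in part (2), being careful that independence of the full collections $\pr{\Fca^{\pr\ell}_{\bullet}}_{\ell}$ — not merely of fixed slices — is what licenses pulling the conditional expectations through the product at both stages. Once these two points are set up, neither step requires a genuinely new idea.
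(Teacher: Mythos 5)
Your proof is correct, and for item (2) it is essentially the paper's own argument: the paper checks \eqref{eq:def_filtration_commutante} on products of indicators $\prod_{\ell=1}^L\ipr{A_\ell}$ with $A_\ell\in\Fca^{\pr{\ell}}_{\gri^{\pr{\ell}}}$ after a Dynkin-class reduction, factorizes the conditional expectation through an independence lemma (Lemma~\ref{lem:esperance_conditionnelle_produit}, applied with $\Gca_\ell=\Fca^{\pr{\ell}}_{\max\ens{\gri^{\pr{\ell}},\grj^{\pr{\ell}}}}$), collapses each factor using the completely commuting hypothesis within each block, and reassembles by a second application of the same lemma --- exactly your three steps. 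The differences are cosmetic: the paper first replaces $Y$ by $\E{Y\mid\Fca_{\gri}}$ so that every factor lies in a \emph{fixed slice}, which lets it use the stated hypothesis (independence of the slices for each fixed choice of indices) verbatim; your version, with $Y_\ell$ measurable with respect to the whole $\ell$-th block $\bigvee_{\gr{k}}\Fca^{\pr{\ell}}_{\gr{k}}$, needs the upgrade from slice independence to independence of the blocks, a point you rightly flag --- it is immediate from a $\pi$-system argument because each block filtration is directed, or can be avoided entirely by restricting your generating class to slice-measurable $Y_\ell$. For item (1) you genuinely depart from the paper: the paper gives no proof, deferring to Proposition~2 of \cite{MR3222815}, whereas your direct argument --- conditioning a cylindrical functional on $\Fca_{\gri}$ integrates out the coordinates $\gru\not\imd\gri$, the composition of two such partial integrations over independent coordinates is the integration over the union of the discarded index sets, namely $\ens{\gru\colon\gru\not\imd\min\ens{\gri,\grj}}$, and an $\mathbb{L}^1$-contraction/density argument removes the cylindrical restriction --- is sound and has the merit of making the statement self-contained.
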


Both examples where introduced in Section 1 of \cite{MR420845}, but without
proof. The first item is a direct consequence of Proposition~2 p. 1693 of
\cite{MR3222815}.

We are now in position to define orthomartingale
martingale difference random field, which allows to exploit the martingale
property in every
direction. To formize this, we need to denote by
$e_{\gr{\ell}}$, $\ell\in\ens{1,\dots,d}$,  the element of $\Z^d$ whose
$\ell$-th coordinate is $1$ and all the others are zero.

\begin{Definition}
 Let $\pr{X_{\gri}}_{\gri\in \Z^d}$ be a random field taking values in a
separable Banach space $\pr{\B,\norm{\cdot}_{\B}}$. We say that
$\pr{X_{\gri}}_{\gri\in \Z^d}$ is an orthomartingale
martingale difference random field with respect to the completely commuting
filtration
$\pr{\Fca_{\gri}}_{\gri\in\Z^d}$ if for each $\gri\in\Z^d$,
$\norm{X_i}_{\B}$ is integrable, $X_{\gri}$ is $\Fca_{\gri}$-measurable and for
each $\ell\in\ens{1,\dots,d}$, $\E{X_{\gri}\mid \Fca_{\gri-\gr{e_\ell}}}=0$.
\end{Definition}

Such a definition is very convenient because summation on a rectangular region
of $\Z^d$ can be treated with martingale properties when summing on a fixed
coordinate.

This allows to use induction arguments. For example,
it can be shown using similar arguments as in the proof of Lemma~2.2
in \cite{MR3483738} that combining an induction argument with Theorem~2.1 in \cite{MR2472010}, the
inequality holds
\begin{equation}
\norm{\sum_{\gr{1}\imd \gri\imd
\gr{N}}X_{\gri}}_p
\leq \pr{p-1}^{d/2}\pr{\sum_{\gr{1}\imd \gri\imd
\gr{N}}\norm{X_{\gri}}_{p}^{2}}^{1/2}.
\end{equation}

Moreover, it has been shown in \cite{MR2264866} that for each $p\geq 1$
and each real-valued orthomartingale difference random field that
\begin{equation}\label{eq:Fazekas}
 c_1\pr{p,d}\norm{\sum_{\gr{1}\imd \gri\imd
\gr{N}}X_{\gri}^2}_{p/2}^{1/2}\leq
\norm{\sum_{\gr{1}\imd \gri\imd
\gr{N}}X_{\gri}}_p
\leq c_2\pr{p,d}\norm{\sum_{\gr{1}\imd \gri\imd
\gr{N}}X_{\gri}^2}_{p/2}^{1/2},
\end{equation}
for some constants $ c_1\pr{p,d}$ and $ c_2\pr{p,d}$ depending only on $p$ and
$d$, which extends the two-dimensional result obtained in \cite{MR520006}.

This suggest that the tails of the partial sums of an orthomartingale
difference random field can be controlled by a functional of the tail of
the sum of squares. We plan to formulate such an inequality for Banach-valued
orthomartingale difference random fields. Some assumptions on the geometry of
the Banach space are required.

\begin{Definition}
 Let $\pr{\B,\norm{\cdot}_{\B}}$ be a separable Banach space. We say that $\B$
is $r$-smooth for $1<r\leq 2$ if there exists an equivalent norm
$\norm{\cdot}'_{\B}$ on $\B$ such that
\begin{equation}
 \sup_{t>0}\sup_{x,y\in\B,\norm{x}'_{\B}=\norm{y}'_{\B}=1,}\frac{\norm{x+ty}'_{\B}+\norm{x-ty}'_{\B}-2}{
t^r}<\infty.
\end{equation}

\end{Definition}

For example, if $\mu$ is $\sigma$-finite on the Borel $\sigma$-algebra of $\R$, 
then $\mathbb L^p\pr{\R,\mu}$ is $\min\ens{p,2}$-smooth.
Moreover, a separable Hilbert space is $2$-smooth.
\begin{Definition}[Martingale in Banach spaces]
 Let $\pr{\B,\norm{\cdot}_{\B}}$ be a separable Banach space. We say that a
 sequence $\pr{D_i}_{i\geq 1}$ is a martingale differences sequence with
respect to a filtration $\pr{\Fca_{i}}_{i\geq 0}$ if for each $i\geq 1$,
$D_{i}$ is $\Fca_i$-measurable and for each $A\in\Fca_{i-1}$,
$\E{X_i\ipr{A}}=0$.
\end{Definition}

By \cite{MR0407963}, we know that if $\B$ is a separable $r$-smooth Banach
space, then
there exists a constant $C$ such that for each martingale
difference sequence $\pr{D_i}_{i\geq 1}$
with values in $\B$ and each $n$,
\begin{equation}\label{eq:moments_ordre_r_martingale_Banach}
 \E{\norm{\sum_{i=1}^nD_i}_{\B}^r}\leq C\sum_{i=1}^n\E{\norm{D_i}_{\B}^r}.
\end{equation}
By definition, an $r$-smooth Banach space is also $p$-smooth for $1<p\leq r$,
hence it is possible to define
\begin{equation} \label{eq:definition_constant_Banach_space}
 C_{p,\B}:=\sup_{n\geq 1}\sup_{\pr{D_i}_{i=1}^n \in \Delta_n}
\frac{\E{\norm{\sum_{i=1}^nD_i}_{\B}^p} }{\sum_{i=1}^n\E{\norm{D_i}_{\B}^p}},
\end{equation}
where the $\Delta_n $ denotes the set of the  martingale differences sequences
$\pr{D_i}_{i=1}^n$ such that $\sum_{i=1}^n \norm{D_i}_{\B}^p$ is not
identically $0$.

A key tool to prove deviation and moment inequalities for martingale difference 
sequence is a so-called "good-$\lambda$-inequality", that is, an 
inequality of the form
\begin{equation}
 \PP\pr{X>\beta \lambda, Y\leq \delta\lambda}\leq 
 \delta^p\pr{\beta-\delta-1}^{-p}\PP\pr{X>\lambda}, 
\end{equation}
where $X=\max_{1\leq n\leq N}\norm{\sum_{i=1}^nD_i}_{\B}$ 
and $Y=\max\ens{\max_{1\leq i\leq N}\norm{D_i}_{\B}, 
\pr{\sum_{i=1}^N\E{\norm{D_i}_{\B}^p\mid\Fca_{i-1}} }^{1/p}}$. Such an approach 
was used in order to derive Burkholder's inequality (see 
\cite{MR365692,MR995572} in the real valued case, \cite{MR3077911,MR4046858} in 
the Banach-valued case). 
Usually, a way to obtain a good-$\lambda$-inequality 
is to introduce a martingale transform of the original martingale based on 
stopping times, where  the increments are controlled as 
well as the indices where the maximum lies between $\lambda$ 
and $\beta\lambda$. Unfortunately, such a method 
does not seem to be appliable in the context of multi-indexed martingales 
essentially because there is no proper generalization of stopping times and 
martingale transforms. 

To overcome this problem, we propose an approach by induction on the dimension. 
In the one dimensional case, we control the tail of the maximum of partial sums 
by the sum of $p$-th power of the increments (see 
Proposition~\ref{prop:deviation_martingales_power}). 
This will be also used for the induction step; the resulting 
sum of powers of norms will be considered as the norm of an orthomartingale 
difference random field indexed by $\Z^{d-1}$ in a modified 
version of the original Banach space, but sharing smoothness properties.

We thus get a control of the tail of the  maximum of the partial sums 
over rectangles by a function of the tail of the sum of $p$-th powers of norms 
of the increments, which is our first result.

\begin{Theorem}\label{thm:deviation_orthomartingales}
Let $1<r\leq 2$ and let $\pr{\B,\norm{\cdot}_{\B}}$ be a separable $r$-smooth
Banach space. For each $p\in (1,r]$, $q>p$ and $d\geq 1$, there exists
a function $f_{p,q,d}\colon \R_+\to \R_+$ such that if $\pr{X_{\gri}}_{\gri\in
\Z^d}$ is a an orthomartingale martingale
differences
random field with respect to a completely commuting filtration
$\pr{\Fca_{\gri}}_{\gri\in\Z^d}$, and taking values in a $\B$, then for each
$1<p\leq r$,
$q>p$ and  $x>0$, the following inequality holds:
\begin{multline}\label{eq:deviation_orthomartingale}
 \PP\pr{\max_{\gr{1}\imd \grn\imd \gr{N}}\norm{\sum_{\gr{1}\imd \gri\imd
 \grn }X_{\gri }  }_{\B} >t
 }
  \leq f_{p,q,d}\pr{C_{p,\B}}
 \int_0^1  u^{q-1} \mathbb
P\pr{
 \pr{\sum_{\gr{1}\imd \gri\imd \gr{N}}\norm{X_{\gri}}_{\B}^p}^{\frac 1p}>tu  }
\mathrm{d}u+\\ +
f_{p,q,d}\pr{C_{p,\B}}
 \int_1^\infty u^{p-1}\pr{1+\log u}^{d-1}\mathbb
P\pr{
 \pr{\sum_{\gr{1}\imd \gri\imd \gr{N}}\norm{X_{\gri}}_{\B}^p}^{\frac 1p}>tu  }
\mathrm{d}u.
\end{multline}
\end{Theorem}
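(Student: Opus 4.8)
The plan is to argue by induction on the dimension $d$, with the one-dimensional case supplied by Proposition~\ref{prop:deviation_martingales_power} (the good-$\lambda$-inequality for martingale difference sequences in $r$-smooth Banach spaces). For $d=1$, one integrates the good-$\lambda$-inequality against $x^{q-1}\,\D x$ to control $\PP\pr{X>t}$ — where $X=\max_{1\le n\le N}\norm{\sum_{i=1}^n X_i}_{\B}$ — by the tails of $Y=\max\ens{\max_i\norm{X_i}_{\B},\pr{\sum_i\E{\norm{X_i}_{\B}^p\mid\Fca_{i-1}}}^{1/p}}$. Since $Y\leq \pr{\sum_i\norm{X_i}_{\B}^p}^{1/p}+$ (a predictable-quadratic-variation term), one further bounds the conditional-variance part of $Y$ by the sum of $p$-th powers of the norms via a maximal-type argument; this produces the two integral regimes $u\le 1$ and $u>1$ in \eqref{eq:deviation_orthomartingale}, with the logarithmic factor absent (since $d-1=0$).

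For the induction step, I would view a $d$-dimensional orthomartingale difference random field $\pr{X_{\gri}}_{\gri\in\Z^d}$ as a martingale difference sequence in the last coordinate: writing $\gri=\pr{\gri',i_d}$ with $\gri'\in\Z^{d-1}$, the family $S_{i_d}=\sum_{\gr{1}'\imd\gri'\imd\gr{N}'}X_{\pr{\gri',i_d}}$ is, for fixed $\gri'$, a martingale difference sequence in $i_d$ with respect to the filtration $\Fca_{\pr{\gr{N}',i_d}}$ (using property (1) of the completely commuting filtration and the orthomartingale property in direction $e_d$). Applying the $d=1$ result in the coordinate $i_d$ reduces the tail of $\max_{\grn\imd\grN}\norm{\sum_{\gri\imd\grn}X_{\gri}}_{\B}$ to the tail of $\pr{\sum_{i_d=1}^{N_d}\norm{\sum_{\gr{1}'\imd\gri'\imd\gr{n}'}X_{\pr{\gri',i_d}}}_{\B}^{p}}^{1/p}$ — but here the inner maximum over $\gr{n}'\in\Z^{d-1}$ must be carried along, which is where one must be careful.

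The key device, as flagged in the introduction, is to reinterpret the quantity $\pr{\sum_{i_d}\norm{\cdot}_{\B}^p}^{1/p}$ as the norm of a $\Z^{d-1}$-indexed orthomartingale difference random field in a new Banach space. Concretely, set $\widetilde\B=\ell^p\pr{\ens{1,\dots,N_d};\B}$ (or rather $\mathbb L^p$ of $\B$-valued sequences), and define $\widetilde X_{\gri'}=\pr{X_{\pr{\gri',i_d}}}_{i_d=1}^{N_d}\in\widetilde\B$, with filtration $\widetilde\Fca_{\gri'}=\Fca_{\pr{\gri',N_d}}$. One checks that $\widetilde\B$ is again $r$-smooth (an $\ell^p$-sum of $r$-smooth spaces is $\min\ens{p,r}=p$-smooth, hence $r$-smooth is too strong — actually $p$-smooth suffices for what follows, and the relevant constant $C_{p,\widetilde\B}$ is controlled by $C_{p,\B}$, essentially because $\E{\norm{\sum_i D_i}_{\widetilde\B}^p}=\sum_{i_d}\E{\norm{\sum_i D_i^{(i_d)}}_{\B}^p}$ by Fubini and the independence of coordinates). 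One also checks $\pr{\widetilde X_{\gri'},\widetilde\Fca_{\gri'}}$ is a genuine $\Z^{d-1}$-orthomartingale difference field: the commuting property in directions $e_1,\dots,e_{d-1}$ is inherited, and $\E{\widetilde X_{\gri'}\mid\widetilde\Fca_{\gri'-e_\ell}}=0$ for $\ell<d$ follows coordinatewise from $\E{X_{\pr{\gri',i_d}}\mid\Fca_{\pr{\gri'-e_\ell,N_d}}}=0$, which in turn uses the orthomartingale property together with \eqref{eq:def_filtration_commutante} to drop the $N_d$ back down. Then $\norm{\widetilde X_{\gri'}}_{\widetilde\B}=\pr{\sum_{i_d}\norm{X_{\pr{\gri',i_d}}}_{\B}^p}^{1/p}$, so $\pr{\sum_{\gri'}\norm{\widetilde X_{\gri'}}_{\widetilde\B}^p}^{1/p}=\pr{\sum_{\gri}\norm{X_{\gri}}_{\B}^p}^{1/p}$, and the induction hypothesis applied to $\pr{\widetilde X_{\gri'}}$ in dimension $d-1$ gives the required bound, with the logarithmic power rising from $d-2$ to $d-1$.

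The main obstacle, and the place I would spend the most care, is the bookkeeping in composing the two inequalities: the $d=1$ step controls $\PP\pr{X>t}$ by an integral $\int u^{\ast-1}(\cdots)\PP\pr{Y>tu}\,\D u$, and the induction step then feeds $\PP\pr{Y>tu}$ into another such integral; iterating $d$ times one must show that the composition of these integral operators produces exactly the weights $u^{q-1}\ind{u\le1}$ and $u^{p-1}\pr{1+\log u}^{d-1}\ind{u>1}$ and a constant of the form $f_{p,q,d}\pr{C_{p,\B}}$. The convolution-in-$\log u$ structure is what generates the $\pr{1+\log u}^{d-1}$ factor (each dimension contributes one power of the logarithm through a convolution of $u^{p-1}$ with itself on the multiplicative group), and one must verify that the $u\le1$ tail, governed by the larger exponent $q>p$, remains integrable and does not accumulate logarithms — this is why $q>p$ is imposed. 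Keeping the constants finite requires that each reduction only costs a factor depending on $p,q$ and on $C_{p,\widetilde\B}\le g(C_{p,\B})$ for an explicit $g$, so that after $d$ steps one still has a finite function of $C_{p,\B}$ alone.
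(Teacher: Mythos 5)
Your overall architecture matches the paper's: induction on the dimension with Proposition~\ref{prop:deviation_martingales_power} as the base case, the reinterpretation of $\pr{\sum_{i_d}\norm{\cdot}_{\B}^p}^{1/p}$ as the norm of data in $\widetilde{\B}=\ell^p\pr{\ens{1,\dots,N_d};\B}$ with $C_{p,\widetilde{\B}}$ controlled by $C_{p,\B}$, and the composition of integral operators generating the $\pr{1+\log u}^{d-1}$ weight. However, there is a genuine gap at exactly the point you flag and then leave unresolved: the double maximum. Your first step, ``apply the $d=1$ result in the coordinate $i_d$'' to $\max_{\gr{n}'\imd\gr{N}'}\max_{n_d\leq N_d}\norm{\sum_{\gri'\imd\gr{n}'}\sum_{i_d\leq n_d}X_{\pr{\gri',i_d}}}_{\B}$, cannot be carried out as stated: for each fixed $\gr{n}'$ you do have a $\B$-valued martingale difference sequence in $i_d$, but Proposition~\ref{prop:deviation_martingales_power} then only controls the maximum over $n_d$ for that single $\gr{n}'$; taking the supremum over $\gr{n}'$ afterwards forces a union bound over all $\gr{n}'\imd\gr{N}'$ (an unbounded loss), while ``carrying the maximum along'' amounts to applying the one-dimensional proposition to a sequence with values in an $\ell^\infty$-type space indexed by $\gr{n}'$, which is not $p$-smooth, so the relevant constant is infinite and the proposition yields nothing. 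Your second step does not repair this: the $(d-1)$-dimensional hypothesis applied to $\widetilde{X}_{\gri'}$ in $\widetilde{\B}$ controls $\max_{\gr{n}'}\pr{\sum_{i_d\leq N_d}\norm{\sum_{\gri'\imd\gr{n}'}X_{\pr{\gri',i_d}}}_{\B}^p}^{1/p}$, where the full sum over $i_d$ sits inside the $p$-th powers; this quantity does not dominate the original maximum, in which partial sums over $i_d$ sit inside a single $\B$-norm.

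The missing ingredient is a maximal inequality that removes one of the two maxima without any smoothness requirement, and this is what the paper supplies first: $Y_{n_{d+1}}:=\max_{\gr{1}\imd\grn\imd\gr{N}}\norm{\sum_{\gri\imd\grn}\sum_{i_{d+1}\leq n_{d+1}}X_{\gri,i_{d+1}}}_{\B}$ is a real-valued nonnegative submartingale in the last coordinate with respect to $\Fca_{\gr{N},n_{d+1}}$ (complete commutation plus the orthomartingale property), so Doob's inequality in the form of Lemma~\ref{lem:Doob} replaces $\max_{n_{d+1}}Y_{n_{d+1}}$ by $Y_{N_{d+1}}$ at the cost of a $T_{1,\infty,0}$ factor. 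Only then is the lower-dimensional induction hypothesis applied to the collapsed $\B$-valued field $\sum_{i_{d+1}\leq N_{d+1}}X_{\gri,i_{d+1}}$, and finally the one-dimensional result in $\widetilde{\B}$ converts $\pr{\sum_{\gri}\norm{\sum_{i_{d+1}}X_{\gri,i_{d+1}}}_{\B}^p}^{1/p}$ into $\pr{\sum_{\gri,i_{d+1}}\norm{X_{\gri,i_{d+1}}}_{\B}^p}^{1/p}$, the whole being assembled through Lemma~\ref{lem:operateur_pq}. Until you insert this Doob/submartingale step (or an equivalent device) your induction step does not control the maximum over all rectangles, so the proposed proof is incomplete even though its other ingredients coincide with the paper's argument.
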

Let us make some comments about this result. 

First, observe that the right hand side of \eqref{eq:deviation_orthomartingale} 
is finite if and only if $\E{\norm{X_{\gri}}_{\B}^p
\pr{1+\log\pr{\norm{X_{\gri}}_{\B}}  }^{d-1} }$ is finite, which is barely more 
restrictive than finiteness of $\E{\norm{X_{\gri}}_{\B}^p}$. Second, letting $Y= 
\pr{\sum_{\gr{1}\imd \gri\imd \gr{N}}\norm{X_{\gri}}_{\B}^p}^{\frac 1p}$, one 
can replace the right hand side of \eqref{eq:deviation_orthomartingale} by 
\begin{equation}
 f_{p,q,d}\pr{C_{p,\B}}\E{\pr{\frac Yt}^q\ind{Y\leq t}}+
 f_{p,q,d}\pr{C_{p,\B}}\E{\pr{\frac Yt}^p  
 \pr{1+\log \pr{\frac Yt}}^{d-1}
 \ind{Y>t}}.
\end{equation}
For $s>p$, multiplying by $st^{s-1}$ in 
\eqref{eq:deviation_orthomartingale} (with $q=s+1$) and integrating over the 
positive real line gives the following moment inequality:
\begin{equation}
 \norm{\max_{\gr{1}\imd \grn\imd \gr{N}}\norm{\sum_{\gr{1}\imd \gri\imd
 \grn }X_{\gri }  }_{\B}}_{s}\leq 
 K\pr{p,s,d}\norm{\pr{\sum_{\gr{1}\imd \gri\imd 
\gr{N}}\norm{X_{\gri}}_{\B}^p}^{\frac 1p}}_{s}.
\end{equation}
This gives a partical generalization to the result of 
\cite{MR2264866}, since we provide an analogue of 
the second inequality in \eqref{eq:Fazekas} for Banach-valued orthomartingale 
difference random fields.

One can generalize Theorem~1.13 of \cite{MR4046858} to stochastically dominated
orthomartingale difference random fields. To state it, we need to define
the following order on random variables: we say that $X\conv Y$ for two
real-valued random variables if for each
convex increasing function $\phi\colon\R\to\R$, $\E{\phi\pr{X}}\leq
\E{\phi\pr{Y}}$.

\begin{Corollary}\label{cor:dev_ortho_stoch_dom}
Let $1<r\leq 2$ and let $\pr{\B,\norm{\cdot}_{\B}}$ be a separable $r$-smooth
Banach space. For each $p\in (1,r]$, $q>p$ and $d\geq 1$, there exists
a function $f_{p,q,d}\colon \R_+\to \R_+$ such that if $\pr{X_{\gri}}_{\gri\in
\Z^d}$ is a an orthomartingale martingale
differences
random field with respect to a completely commuting filtration
$\pr{\Fca_{\gri}}_{\gri\in\Z^d}$, taking values in $\B$, and such that
there exists a real-valued random variable
$V$ such that
\begin{equation}
  \sum_{\gr{1}\imd \gri\imd \gr{N}}\norm{X_{\gri}}_{\B}^p \conv V^p,
\end{equation}
 then for each
$1<p\leq r$,
$q>0$ and  $x>0$, the following inequality holds:
\begin{multline}\label{eq:dev_ortho_stoch_dom}
 \PP\pr{\max_{\gr{1}\imd \grn\imd \gr{N}}\norm{\sum_{\gr{1}\imd \gri\imd
 \grn }X_{\gri }  }_{\B} >t
 }  \leq f_{p,q,d}\pr{C_{p,\B}}
 \int_0^1 u^{q-1}  \mathbb P\pr{V>tu  }
\mathrm{d}u\\
+f_{p,q,d}\pr{C_{p,\B}}
 \int_1^\infty  u^{p-1}\pr{1+ \log u}^{d}\mathbb P\pr{
V>tu  }
\mathrm{d}u.
\end{multline}
\end{Corollary}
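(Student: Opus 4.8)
The plan is to deduce Corollary~\ref{cor:dev_ortho_stoch_dom} from Theorem~\ref{thm:deviation_orthomartingales} by a comparison argument for the order $\conv$. Write $Y=\pr{\sum_{\gr{1}\imd \gri\imd \gr{N}}\norm{X_{\gri}}_{\B}^p}^{1/p}$, so that Theorem~\ref{thm:deviation_orthomartingales} gives a bound on $\PP\pr{\max_{\gr{1}\imd \grn\imd \gr{N}}\norm{\sum_{\gr{1}\imd \gri\imd \grn}X_{\gri}}_{\B}>t}$ in terms of $\int_0^1 u^{q-1}\PP\pr{Y>tu}\,\D u$ and $\int_1^\infty u^{p-1}\pr{1+\log u}^{d-1}\PP\pr{Y>tu}\,\D u$. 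The hypothesis is $Y^p\conv V^p$, and we want to replace $Y$ by $V$ in both integrals. So the core task is: given $Y^p\conv V^p$, show that $\int_0^1 u^{q-1}\PP\pr{Y>tu}\,\D u$ and $\int_1^\infty u^{p-1}\pr{1+\log u}^{d-1}\PP\pr{Y>tu}\,\D u$ are each controlled (up to a multiplicative constant depending only on $p,q,d$, which can be absorbed into $f_{p,q,d}$) by the corresponding integrals with $V$ in place of $Y$ — the second one with exponent $\pr{1+\log u}^{d}$ rather than $\pr{1+\log u}^{d-1}$, which is where the extra logarithmic power in \eqref{eq:dev_ortho_stoch_dom} comes from.

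The key observation is that, by Fubini, each of these integrals is $\E{g\pr{Y}}$ for an explicit nondecreasing function $g\colon\R_+\to\R_+$: indeed $\int_0^1 u^{q-1}\PP\pr{Y>tu}\,\D u = \E{\int_0^{\min\ens{1,Y/t}} u^{q-1}\,\D u} = q^{-1}\E{\min\ens{1,(Y/t)^q}}$, and similarly $\int_1^\infty u^{p-1}\pr{1+\log u}^{d-1}\PP\pr{Y>tu}\,\D u = \E{\psi\pr{Y/t}}$ where $\psi(y)=\int_1^{\max\ens{1,y}}u^{p-1}\pr{1+\log u}^{d-1}\,\D u$. Now $g_1(y):=\min\ens{1,y^q}$ and $\psi$ are nondecreasing, but $\psi$ is not convex at $1$ (it vanishes on $[0,1]$ and then grows), so we cannot directly invoke the definition of $\conv$. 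The standard fix is to dominate $\psi$ by a convex increasing function of comparable size on $[1,\infty)$: on $[1,\infty)$ one has $\psi(y)\leq C\pr{p,d}\, y^p\pr{1+\log y}^{d-1}$, and the function $y\mapsto y^p\pr{1+\log y}^{d}$ (say, suitably extended to a convex increasing function on all of $\R$, e.g. taking its value on $(-\infty,1]$ to be a linear/constant continuation, or more simply the convex increasing majorant) dominates a constant times $y^p\pr{1+\log y}^{d-1}\ind{y\geq 1}$; here the passage from $d-1$ to $d$ buys us the convexity near the threshold. Likewise $y\mapsto \min\ens{1,y^q}$ is dominated by the convex increasing function $y\mapsto y^q$ for $y\geq 0$ (extended by $0$ on the negatives), and since we only evaluate at $Y\geq 0$ and $V\geq 0$ this is harmless.

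With these majorants in hand the argument closes quickly: applying $X\conv Y$ with the convex increasing functions $\phi_1(y)=y^q$ (for the contribution near $0$ it is cleaner to keep $\min\ens{1,y^q}$, which is itself increasing; one can split $\int_0^1 = \int_0^1 u^{q-1}\PP\pr{Y>tu}$ and bound $\PP\pr{Y>tu}\leq \PP\pr{Y^p>(tu)^p}$ and use $V^p\smd Y^p$ i.e. $\E{\phi(Y^p)}\leq\E{\phi(V^p)}$ with $\phi(s)=\ind{s>(tu)^p}$? — no, an indicator is not convex, so one genuinely needs the integrated form via Fubini) one gets $\E{\psi(Y/t)}\leq C\pr{p,d}\E{(Y/t)^p\pr{1+\log(Y/t)}^{d-1}\ind{Y\geq t}}\leq C\pr{p,d}\,\E{\Phi_d(Y/t)}$ where $\Phi_d$ is the chosen convex increasing majorant, hence $\leq C\pr{p,d}\,\E{\Phi_d(V/t)}$ by $Y\conv V$ applied coordinate-wise after noting $Y^p\conv V^p$ and $s\mapsto\Phi_d(s^{1/p})$ is convex increasing in $s$; finally $\E{\Phi_d(V/t)}$ is bounded by a constant times $\int_1^\infty u^{p-1}\pr{1+\log u}^{d}\PP\pr{V>tu}\,\D u$ plus a term absorbed into the first integral. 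The same scheme handles the first integral with exponent $q$. All the emerging numerical constants depend only on $p,q,d$, so they can be folded into a new $f_{p,q,d}$. I expect the main obstacle to be purely bookkeeping: checking that $s\mapsto\Phi_d(s^{1/p})$ (equivalently the relevant composed function) is genuinely convex increasing so that $Y^p\conv V^p$ may be invoked, and verifying the elementary inequality $\int_1^{y}u^{p-1}\pr{1+\log u}^{d-1}\,\D u\leq C\pr{p,d}\,y^p\pr{1+\log y}^{d-1}$ together with a convex-majorant estimate that upgrades the exponent from $d-1$ to $d$ near $y=1$; none of this is deep, but it is where the precise form of \eqref{eq:dev_ortho_stoch_dom}, including the shift $d-1\rightsquigarrow d$, is actually produced.
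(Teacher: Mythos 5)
Your high-level plan coincides with the paper's (start from Theorem~\ref{thm:deviation_orthomartingales} and transfer the two tail integrals from $Y=\pr{\sum_{\gr{1}\imd\gri\imd\gr{N}}\norm{X_{\gri}}_{\B}^p}^{1/p}$ to $V$ via the convex ordering), but your mechanism differs: the paper never majorizes by convex test functions directly; it applies the scaling identity \eqref{eq:T_pq_tail_powers}, then the tail inequality under convex ordering \eqref{eq:conv_ordering_tails_integrales}--\eqref{eq:conv_ordering_tails}, and then merges the resulting composition $T_{1,q/p,d-1}\circ T_{1,\infty,0}$ by Lemma~\ref{lem:operateur_pq}; the shift from $\pr{1+\log u}^{d-1}$ to $\pr{1+\log u}^{d}$ comes precisely from that composition (the indicator $\ind{p_1=p_2}$ in the lemma), not from any convexity requirement. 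In fact your stated reason for the shift is off: the function $s\mapsto s\pr{1+\frac1p\log s}^{d-1}$ is already convex and increasing on $[1,\infty)$ and extends convexly by its tangent at $s=1$, so your majorant argument for the $\int_1^\infty$ term works without upgrading the exponent (and would even give $d-1$, which is stronger than \eqref{eq:dev_ortho_stoch_dom} and harmless).

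The genuine gap is in the $\int_0^1$ term. By Fubini it equals $q^{-1}\E{\min\ens{1,\pr{Y/t}^q}}$, and $\min\ens{1,y^q}$ is increasing but not convex; the only concrete majorant you name is $y\mapsto y^q$, and your closing claim that ``the same scheme handles the first integral with exponent $q$'' rests on it. But after applying the ordering this yields $\E{\pr{V/t}^q}=q\int_0^\infty u^{q-1}\PP\pr{V>tu}\D u$, whose part over $u>1$ is \emph{not} dominated by $\int_1^\infty u^{p-1}\pr{1+\log u}^{d}\PP\pr{V>tu}\D u$ since $q>p$: if $V$ has a finite $\varphi_{p,d}$-moment but an infinite $q$-th moment, the right-hand side of \eqref{eq:dev_ortho_stoch_dom} is finite while your bound is infinite, so this step cannot produce the stated inequality. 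Your own hesitation (indicators are not convex, ``one genuinely needs the integrated form'') flags the problem but does not resolve it. Within your scheme the fix is to majorize $\min\ens{1,y^q}$ by the convex increasing function equal to $y^q$ on $[0,1]$ and to $1+\frac qp\pr{y^p-1}$ on $[1,\infty)$ (whose composition with $s\mapsto s^{1/p}$ is convex, piecewise $s^{q/p}$ and affine with matching slope $q/p$ at $s=1$); the ordering then gives $\E{\pr{V/t}^q\ind{V\leq t}}+C\E{\pr{V/t}^p\ind{V>t}}$, and both expectations convert back into the two integrals on the right of \eqref{eq:dev_ortho_stoch_dom}, absorbing the boundary term $\PP\pr{V>t}\leq q\int_0^1u^{q-1}\PP\pr{V>tu}\D u$. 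Alternatively, follow the paper: apply \eqref{eq:conv_ordering_tails} once to $\tail{Y^p}$ and conclude with Lemma~\ref{lem:operateur_pq} and \eqref{eq:T_pq_tail_powers}, which treats both integrals simultaneously.
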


The functions $f_{p,q,d}$ involved in \eqref{eq:dev_ortho_stoch_dom} are bigger 
than the ones in \eqref{eq:deviation_orthomartingale}. Moreover, the second 
term in the right hand side contains a power $d$ instead of $d-1$, which is due 
to the combination with an other tail inequality under convex ordering (see 
\eqref{eq:conv_ordering_tails_integrales}). In some applications, this will 
play a role.

\section{Applications}

 \subsection{Application to regression models}

 We consider the following regression model:
 \begin{equation}
  Y_{\gri}=g\pr{\frac{\gr{i}}n}+X_{\gr{i}}, \quad
  \gr{i}\in\Lambda_n:=\ens{1,\dots,n}^d,
 \end{equation}
 where $g\colon [0,1]^d\to \R$ is an unknown smooth function
 and $\pr{X_{\gr{i}}}_{\gr{i}\in \Z^d}$ is an orthomartingale difference
random field. Let $K$ be a probability
 kernel defined on $\R^d$ and let $\pr{h_n}_{n\geq 1}$ be a
 sequence of positive numbers which converges to zero and which
 satisfies
 \begin{equation}\label{eq:assumption_nhn}
  \lim_{n\to+\infty}nh_n=+\infty\mbox{ and }\lim_{n\to+\infty}nh_n^{d+1}=0.
 \end{equation}

 We estimate the function
 $g$ by the kernel estimator $g_n$ defined by
 \begin{equation}\label{eq:definition_de_gnx}
 g_n\pr{\gr{x}}=\frac{\sum_{\gr{i}\in\Lambda_n}Y_{\gr{i}}
 K\pr{\frac{\gr{x}-\gr{i}/n}{h_n}}}
 {\sum_{\gr{i}\in\Lambda_n} K\pr{\frac{\gr{x}-\gr{i}/n}{h_n}}}
 ,\quad \gr{x}\in [0,1]^d,
 \end{equation}
where
\begin{equation}\label{eq:definition_de_Lambda_n_regression}
 \Lambda_n=\ens{1,\dots,n}^d.
\end{equation}

 We make the following assumptions on the regression function $g$
 and the probability kernel $K$:

 \begin{enumerate}[label=(A\arabic*)]
  \item\label{itm:assumption1} The probability kernel $K$
  fulfills $ \int_{\R^d}K\pr{\gr{u}}\mathrm d\gr{u}=1$,
  is symmetric, non-negative, supported by $[-1,1]^d$.
  \item\label{itm:assumption_bounds} There
  exist positive constants   $c$ and $C$ such that for any
  $\gr{x} \in [-1,1]^d$,
  $c\leq K\pr{\gr{x}}\leq C$.
  \item\label{itm:assumption2} There exists a positive constant $C$
  such that the absolute values of
  all the derivatives of first order of $g$ are bounded by $K$ on $[0,1]^d$ 	ns 
  for each $\gr{x},\gr{y}\in [0,1]^d$,   $\abs{K\pr{\gr{x}}-K\pr{\gr{y}}} \leq C\norm{\gr{x}-\gr{y}}_\infty$.
 \end{enumerate}

Assumption~\ref{itm:assumption2} will not be used in the
following result. However, by Proposition~1 in \cite{MR2269603},
this guarantees that
\begin{equation}
 \sup_{\gr{x}\in [0,1]^d}\sup_{g\in \operatorname{Lip}\pr{K}}
 \abs{\E{g_n\pr{\gr{x}}}-g\pr{\gr{x}}}=O\pr{h_n},
\end{equation}
where $\operatorname{Lip}\pr{K}$ denotes the collection of
all $K$-Lipschitz functions on $\R^d$.  

\begin{Theorem} \label{thm:LLN_regression}
Let $p>1$ and let $\pr{X_{\gri}}_{\gri\in \Z^d}$ be an identically distributed real-valued orthomartingale difference random field and let
$g_n\colon [0,1]^d\to \R$ be given by \eqref{eq:definition_de_gnx}. Assume that 
\ref{itm:assumption1} and \ref{itm:assumption2} hold.
For each positive $t$,
the following inequality takes place:
\begin{itemize}
 \item for $1<p\leq 2$,
 \begin{multline}\label{eq:regression_p<2}
 \PP\pr{\norm{g_n\pr{\cdot}-\E{g_n\pr{\cdot}} }_{\el^p\pr{[0,1]^d}}>t  }\leq \kappa_{p,q,d}
 \int_0^1u^{q-1}\PP\pr{\abs{X_{\gr{1}}} >t\pr{nh_n}^{d\pr{1-1/p}} u }du\\
 +\kappa_{p,q,d}\int_1^\infty u^{p-1}\pr{1+\log u}^d
 \PP\pr{\abs{X_{\gr{1}}} >t\pr{nh_n}^{d\pr{1-1/p}}u }du;
\end{multline}
 \item for $p>2$,
 \begin{multline}\label{eq:regression_p>2}
 \PP\pr{\norm{g_n\pr{\cdot}-\E{g_n\pr{\cdot}} }_{\el^p\pr{[0,1]^d}}>t  }\leq 
\kappa_{p,q,d}
 \int_0^1u^{q-1}\PP\pr{\abs{X_{\gr{1}}} >t\pr{nh_n}^{d\pr{p-1}/2} 
n^{d\frac{2-p}{2p}} u }du\\
 +\kappa_{p,q,d}\int_1^\infty u\pr{1+\log u}^d
 \PP\pr{\abs{X_{\gr{1}}} >t\pr{nh_n}^{d\pr{p-1}/2} 
n^{d\frac{2-p}{2p}}  u }du.
\end{multline}
\end{itemize}
\end{Theorem}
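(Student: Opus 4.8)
The plan is to rewrite $g_n(\cdot)-\E{g_n(\cdot)}$ as a sum, over $\gri\in\Lambda_n$, of deterministically reweighted copies of the $X_\gri$ viewed as random elements of the Banach space $\el^p\pr{[0,1]^d}$, and then to invoke Corollary~\ref{cor:dev_ortho_stoch_dom}. Put $D_n\pr{\gr x}:=\sum_{\grj\in\Lambda_n}K\pr{(\gr x-\grj/n)/h_n}$ and, for $\gri\in\Lambda_n$, $\psi_{\gri,n}\pr{\gr x}:=K\pr{(\gr x-\gri/n)/h_n}/D_n\pr{\gr x}$; each $\psi_{\gri,n}$ is a fixed element of $\el^p\pr{[0,1]^d}$. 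Since the values $g\pr{\gri/n}$ are deterministic and $\E{X_\gri}=0$ (which follows from $\E{X_\gri\mid\Fca_{\gri-\gr{e_\ell}}}=0$), one has the exact identity $g_n\pr{\gr x}-\E{g_n\pr{\gr x}}=\sum_{\gri\in\Lambda_n}X_\gri\psi_{\gri,n}\pr{\gr x}$. Because $D_n$ is deterministic and $\pr{X_\gri,\Fca_\gri}$ is an orthomartingale difference random field, the family $\pr{X_\gri\psi_{\gri,n}}_{\gri\in\Lambda_n}$, completed by $0$ outside $\Lambda_n$, is an $\el^p\pr{[0,1]^d}$-valued orthomartingale difference random field for the same completely commuting filtration: $X_\gri\psi_{\gri,n}$ is $\Fca_\gri$-measurable and $\E{X_\gri\psi_{\gri,n}\mid\Fca_{\gri-\gr{e_\ell}}}=\E{X_\gri\mid\Fca_{\gri-\gr{e_\ell}}}\psi_{\gri,n}=0$. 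Finally $\el^p\pr{[0,1]^d}$ is $\min\ens{p,2}$-smooth, so Corollary~\ref{cor:dev_ortho_stoch_dom} applies to it with exponent $p$ when $1<p\leq 2$ and with exponent $2$ when $p>2$, which is exactly the dichotomy in the statement.

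Next I would control the weights. By \ref{itm:assumption1} the kernel is supported by $[-1,1]^d$, and by the lower bound $K\geq c$ on $[-1,1]^d$ from \ref{itm:assumption_bounds} (this is the assumption really used here; \ref{itm:assumption2} plays no role in the tail estimate, as noted before the statement), a lattice-point count — legitimate because $nh_n\to+\infty$ — gives $D_n\pr{\gr x}\geq c_1\pr{nh_n}^d$ for all $\gr x\in[0,1]^d$ and all $n$ large, for some $c_1>0$. Consequently
\[
 \norm{\psi_{\gri,n}}_{\el^p\pr{[0,1]^d}}^p
 \leq\frac{1}{c_1^p\pr{nh_n}^{dp}}\int_{\R^d}K\pr{(\gr x-\gri/n)/h_n}^p\,\mathrm d\gr x
 \leq\frac{C^p\pr{2h_n}^d}{c_1^p\pr{nh_n}^{dp}},
\]
so that $\norm{\psi_{\gri,n}}_{\el^p\pr{[0,1]^d}}\leq c_2\,h_n^{d/p}\pr{nh_n}^{-d}$ uniformly in $\gri\in\Lambda_n$, with $c_2:=2^{d/p}C/c_1$.

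It then remains to produce the dominating random variable $V$ needed in Corollary~\ref{cor:dev_ortho_stoch_dom}. Write $s:=\min\ens{p,2}$. Since the $X_\gri$ are identically distributed, for every convex increasing $\phi$ convexity gives $\phi\pr{\sum_{\gri\in\Lambda_n}\abs{X_\gri}^s}=\phi\pr{n^{-d}\sum_{\gri\in\Lambda_n}n^d\abs{X_\gri}^s}\leq n^{-d}\sum_{\gri\in\Lambda_n}\phi\pr{n^d\abs{X_\gri}^s}$, whence $\sum_{\gri\in\Lambda_n}\abs{X_\gri}^s\conv n^d\abs{X_{\gr 1}}^s$; combining this with $\norm{X_\gri\psi_{\gri,n}}_{\el^p}^s=\abs{X_\gri}^s\norm{\psi_{\gri,n}}_{\el^p}^s\leq c_2^s h_n^{ds/p}\pr{nh_n}^{-ds}\abs{X_\gri}^s$ and the monotonicity of $\phi$ yields $\sum_{\gri\in\Lambda_n}\norm{X_\gri\psi_{\gri,n}}_{\el^p}^s\conv V^s$ with $V:=c_2\,h_n^{d/p}\pr{nh_n}^{-d}n^{d/s}\abs{X_{\gr 1}}$. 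Applying Corollary~\ref{cor:dev_ortho_stoch_dom} to $\pr{X_\gri\psi_{\gri,n}}$ with $\grN=\pr{n,\dots,n}$ and exponent $s$, using $\norm{g_n\pr{\cdot}-\E{g_n\pr{\cdot}}}_{\el^p\pr{[0,1]^d}}\leq\max_{\gr 1\imd\grn\imd\grN}\norm{\sum_{\gr 1\imd\gri\imd\grn}X_\gri\psi_{\gri,n}}_{\el^p\pr{[0,1]^d}}$, then substituting $V$ and absorbing $c_2$ by the change of variable $u\mapsto c_2 u$ into the constant (which thereby also depends on $c,C$), delivers the two displayed inequalities, with normalisation $\pr{nh_n}^{d(1-1/p)}$ and tail factor $u^{p-1}\pr{1+\log u}^d$ when $1<p\leq 2$, and, when $p>2$, the factor $n^{d/s}=n^{d/2}$ producing the additional power of $n$ relative to the $p\leq 2$ normalisation, together with the tail factor $u\pr{1+\log u}^d$. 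The step to watch is the passage under convex order from $\sum_{\gri\in\Lambda_n}\norm{X_\gri\psi_{\gri,n}}_{\el^p}^s$ to $V^s$ — one must make sure $\conv$ is preserved both under the deterministic bound on the weights and under replacing the identically distributed sum by a single multiple of $\abs{X_{\gr 1}}$ — and, in the case $p>2$, the bookkeeping of the exponents of $n$ and $h_n$, since only the $2$-smoothness (not the $p$-smoothness) of $\el^p\pr{[0,1]^d}$ is available.
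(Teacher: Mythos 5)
Your reduction is exactly the paper's: the same $\el^p\pr{[0,1]^d}$-valued rewriting $g_n-\E{g_n\pr{\cdot}}=\sum_{\gri\in\Lambda_n}X_{\gri}\psi_{\gri,n}$, the same convexity argument producing a dominating variable $V$ proportional to $\abs{X_{\gr{1}}}$, and the same appeal to Corollary~\ref{cor:dev_ortho_stoch_dom} with exponent $p'=\min\ens{p,2}$. For $1<p\leq 2$ your bookkeeping (uniform bound on $\norm{\psi_{\gri,n}}_{\el^p}$ times $\operatorname{Card}\pr{\Lambda_n}=n^d$) gives $\sum_{\gri}\norm{\psi_{\gri,n}}_{\el^p}^p\leq\kappa\pr{nh_n}^{d\pr{1-p}}$, which is the same bound as in the paper, and \eqref{eq:regression_p<2} follows as you say (the constant absorbs $c,C$ via the change of variable, as in the paper).

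For $p>2$, however, your computation does not deliver \eqref{eq:regression_p>2} as you assert. Your $V=c_2h_n^{d/p}\pr{nh_n}^{-d}n^{d/2}\abs{X_{\gr{1}}}$ produces the threshold $tu\,n^{d/2}h_n^{d\pr{p-1}/p}=tu\,\pr{nh_n}^{d\pr{p-1}/p}n^{d\pr{2-p}/\pr{2p}}$, whereas the statement has $\pr{nh_n}^{d\pr{p-1}/2}n^{d\pr{2-p}/\pr{2p}}$; these differ by the factor $\pr{nh_n}^{d\pr{p-1}\pr{p-2}/\pr{2p}}\to\infty$, so what you actually prove is strictly weaker than the printed inequality, and the gap in your write-up is precisely the last sentence claiming the exponents match. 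To locate the discrepancy: the paper reaches the stronger exponent by writing H\"older as $\sum_{\gri}\alpha_{\gri,p}\leq\sum_{\gri}\alpha_{\gri,p}^{p/2}\operatorname{Card}\pr{\Lambda_n}^{\pr{p-2}/p}$, i.e.\ with the exponent $2/p$ missing on the first factor; restoring it gives $\pr{\sum_{\gri}\alpha_{\gri,p}^{p/2}}^{2/p}n^{d\pr{p-2}/p}\leq\kappa\pr{nh_n}^{2d\pr{1-p}/p}n^{d\pr{p-2}/p}=\kappa\, n^{-d}h_n^{2d\pr{1-p}/p}$, which is exactly your bound, and for a kernel bounded above and below all interior indices $\gri$ contribute comparably, so this order for $\sum_{\gri}\alpha_{\gri,p}$ is sharp. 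In other words, your arithmetic is the correct version of the paper's argument; what it cannot do (and what the paper's own step does not legitimately do either) is produce the exponents displayed in \eqref{eq:regression_p>2}.
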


Note that in the case $1<p\leq 2$, the assumptions that $nh_n\to\infty$ 
and $\E{\abs{X_1}^p\pr{1+ \log\pr{\abs{X_1}}\ind{\abs{X_1}>1}}^{d-1}}$ is finite
 suffice
to guarantee that the right hand side of \eqref{eq:regression_p<2} goes to $0$ 
as $n$ goes to infinity. 
However, in the case $p>2$, the extra term $n^{d\frac{2-p}{2p}}$ imposes 
a restriction in the choice of the bandwidth. For example, if $h_n=n^{-\gamma}$,
then we should have $1/\pr{d+1}<\gamma<1-\pr{p-2}/\pr{p\pr{p-1}}$.

\subsection{Law of large numbers for sums of Banach-valued orthomartingale
difference
random fields over rectangles}\label{subsec:LGN}

In this Subsection, we will deal with convergence rates of
orthomartingale difference random fields.

Althought our first result is not a consequence of
Theorem~\ref{thm:deviation_orthomartingales}, it gives a necessary and
sufficient condition for the Marcinkievicz strong law of large numbers to take
place in a smooth Banach space.

In order to state it, we need, to introduce the following norm.
For $p\geq 1$ and $q\geq 0$, denote by $\norm{\cdot}_{p,q}$ the
Orlicz-norm associated to the Young funtion $\varphi_{p,q}\colon t\in
(0,\infty)]\mapsto t^p\pr{1+\abs{\log t} }^{q}$, that is,
\begin{equation}
 \norm{X}_{p,q}=\inf\ens{\lambda>0\mid \E{\varphi_{p,q}\pr{\frac{\abs{X}
}{\lambda} }} \leq 1 }.
\end{equation}
For $q=0$, $\norm{\cdot}_{p,q}=\norm{\cdot}_{p,0}$ reduces to
the classical $\el^p$-norm and will be simply denoted as
$\norm{\cdot}_p$.

For $\grn=\pr{n_\ell}_{\ell=1}^d\in\N^d$, we define 
$\gr{2^n}=\pr{2^{n_\ell}}_{\ell\geq 1}$, $\abs{\grn}=\prod_{\ell=1}^dn_\ell$ 
and $\max\grn=\max_{1\leq \ell\leq d}n_\ell$.

\begin{Theorem}\label{thm:loi_des_grands_nombres_orthomartingale}
  Let $\pr{\B,\norm{\cdot}_{\B}}$ be a separable
  $r$-smooth Banach space for some $r\in
(1,2]$,  $1<p<r$ and $d\in\N$. There exists a constant
  $K_{p,d,\B}$ such that the following holds: if
$\pr{X_{\gri}}_{\gr{i}\in\Z^d}$ is an identically distributed
   orthomartingale difference random field such
  that  $\norm{X_{\gr{1}}}_{\B}\in\mathbb L_{p,d-1}$,
  then for all positive $x$, the following inequality holds
  \begin{equation}\label{eq:control_of_sum_PAn}
  \sum_{\gr{n}\in\N^d}\PP\pr{ \abs{\gr{2^{n}}}^{-1/p}
  \max_{\gr{1}\imd\grj\imd \gr{2^n}}\norm{S_{\grj}}_{\B}
  >t}
  \leq K_{p,d}\E{\varphi_{p,d-1}\pr{\frac{\norm{X_{\gr{1}}}_{\B}} x }},
  \end{equation}
  where $S_{\grj}=\sum_{\gr{1}\imd\gri\imd\grj}X_{\gri}$.
  In particular, for some constant $C_{p,d}$ depending only on $p$ and $d$,
  \begin{equation}\label{eq:control_weak_Lp_norm_maximum}
\pr{\sup_{t>0}t^p\PP\pr{\sup_{\gr{n}\smd\gr{1}}\frac{\norm{S_{\gr{n}}}_{\B}
}{\abs{\gr {n } } ^ { 1/p } }>t   }}^{1/p}
  \leq C_{p,d,\B} \norm{X_{\gr{1}}}_{p,d-1}
  \end{equation}
  and the following convergence holds:
  \begin{equation}\label{eq:convergence_presque_sure}
  \lim_{N\to \infty  }\sup_{\grn\smd\gr{1}, 
\max\grn \geq N}\frac{\norm{S_{\gr{n}}}_{\B}
}{\abs{\gr{n}}^{1/p}}
=0\mbox{
  almost surely.}
  \end{equation}
  \end{Theorem}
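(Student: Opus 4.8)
The plan is to derive the three conclusions in a pipeline: first establish the summability bound \eqref{eq:control_of_sum_PAn}, then deduce the weak-$\el^p$ estimate \eqref{eq:control_weak_Lp_norm_maximum}, and finally upgrade to the almost-sure convergence \eqref{eq:convergence_presque_sure} by a standard Borel--Cantelli argument along the dyadic skeleton. The heart of the matter is \eqref{eq:control_of_sum_PAn}. For a fixed $\grn\in\N^d$ I would apply the moment/deviation machinery of Theorem~\ref{thm:deviation_orthomartingales} to the orthomartingale difference random field $\pr{X_{\gri}}$ restricted to the rectangle $\gr{1}\imd\gri\imd\gr{2^n}$, using the norm of the sum-of-$p$-th-powers as the driving quantity: since all $X_{\gri}$ are identically distributed, $\sum_{\gr{1}\imd\gri\imd\gr{2^n}}\norm{X_{\gri}}_{\B}^p$ has expectation $\abs{\gr{2^n}}\E{\norm{X_{\gr{1}}}_{\B}^p}$ and more generally its tail is governed by that of a single $\norm{X_{\gr{1}}}_{\B}$ summed $\abs{\gr{2^n}}$ times. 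Plugging $t\mapsto t\abs{\gr{2^n}}^{1/p}$ into \eqref{eq:deviation_orthomartingale} and then summing over $\grn\in\N^d$, one exchanges the sum and the $u$-integrals; the key point is that $\sum_{\grn\in\N^d}\PP\pr{\pr{\sum_{\gr{1}\imd\gri\imd\gr{2^n}}\norm{X_{\gri}}_{\B}^p}^{1/p}>t\abs{\gr{2^n}}^{1/p}u}$ can be controlled, after a truncation and a layer-cake / Fubini computation, by $\E{\varphi_{p,d-1}\pr{\norm{X_{\gr{1}}}_{\B}/(tu\cdot c)}}$ for a suitable constant, where the $\pr{1+\log}^{d-1}$ weight in $\varphi_{p,d-1}$ arises precisely from counting the number of $\grn$ with $\abs{\gr{2^n}}$ in a given dyadic block (which is $\sim (\log)^{d-1}$) — this matches the combinatorial factor $\pr{1+\log u}^{d-1}$ already visible in \eqref{eq:deviation_orthomartingale}.

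For the second assertion, \eqref{eq:control_weak_Lp_norm_maximum}, I would observe that $\sup_{\grn\smd\gr{1}}\norm{S_{\grn}}_{\B}/\abs{\grn}^{1/p}$ is, up to a bounded multiplicative factor, dominated by $\sup_{\grn\in\N^d}\abs{\gr{2^n}}^{-1/p}\max_{\gr{1}\imd\grj\imd\gr{2^n}}\norm{S_{\grj}}_{\B}$, because any $\grj\smd\gr{1}$ lies in a dyadic rectangle $\gr{1}\imd\grj\imd\gr{2^n}$ with $\abs{\gr{2^n}}\leq 2^d\abs{\grj}$. Hence the event $\ens{\sup_{\grn}\norm{S_{\grn}}_{\B}/\abs{\grn}^{1/p}>t}$ is contained in the union over $\grn$ of the events in \eqref{eq:control_of_sum_PAn} with $x$ of order $t$, and \eqref{eq:control_of_sum_PAn} bounds $t^p$ times its probability by $t^p\E{\varphi_{p,d-1}\pr{\norm{X_{\gr{1}}}_{\B}/(ct)}}$; taking the supremum over $t$ and using the definition of the Orlicz norm $\norm{\cdot}_{p,d-1}$ gives the claimed bound with $C_{p,d,\B}$ absorbing the constants. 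Finally, for \eqref{eq:convergence_presque_sure}, the summability \eqref{eq:control_of_sum_PAn} with any fixed $x>0$ shows via Borel--Cantelli that $\abs{\gr{2^n}}^{-1/p}\max_{\gr{1}\imd\grj\imd\gr{2^n}}\norm{S_{\grj}}_{\B}\to 0$ almost surely as $\max\grn\to\infty$; the passage from the dyadic maxima to the full supremum over $\ens{\grn\smd\gr{1},\max\grn\geq N}$ is again handled by the same dyadic-blocking comparison as above, so $\sup_{\grn\smd\gr{1},\max\grn\geq N}\norm{S_{\grn}}_{\B}/\abs{\grn}^{1/p}$ is bounded by $2^{d/p}$ times a tail of an almost-surely convergent-to-zero array, whence it tends to $0$.

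The main obstacle I anticipate is the bookkeeping in the summation over $\grn\in\N^d$ in the proof of \eqref{eq:control_of_sum_PAn}: one must carefully split the range of $u$ (the $(0,1)$ part contributing the $u^{q-1}$ factor with $q>p$, which is summable, versus the $(1,\infty)$ part carrying the $\pr{1+\log u}^{d-1}$ weight), perform a truncation of $\norm{X_{\gr{1}}}_{\B}$ at level comparable to $t\abs{\gr{2^n}}^{1/p}u$, and then verify that the double counting — over dyadic scales $\grn$ and over the truncation levels — produces exactly the Orlicz function $\varphi_{p,d-1}$ and not a larger one. This is where the precise exponent $d-1$ (rather than $d$, as appears in Corollary~\ref{cor:dev_ortho_stoch_dom}) has to be tracked, and it requires using Theorem~\ref{thm:deviation_orthomartingales} directly rather than its stochastically-dominated corollary. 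The rest is routine once this computation is set up correctly.
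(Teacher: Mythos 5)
Your reductions of \eqref{eq:control_weak_Lp_norm_maximum} and \eqref{eq:convergence_presque_sure} to \eqref{eq:control_of_sum_PAn} (dyadic comparison, scaling in the Orlicz norm, Borel--Cantelli) coincide with the paper's and are fine. The gap is in your plan for \eqref{eq:control_of_sum_PAn} itself: the paper states explicitly, just before the theorem, that this result is \emph{not} a consequence of Theorem~\ref{thm:deviation_orthomartingales}, and the obstruction is precisely the point you flag but do not resolve. To feed Theorem~\ref{thm:deviation_orthomartingales} you must bound the tail of $\pr{\sum_{\gr{1}\imd\gri\imd\gr{2^n}}\norm{X_{\gri}}_{\B}^p}^{1/p}$ using only the marginal law of $\norm{X_{\gr{1}}}_{\B}$, since the increments are not independent. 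The only available devices are the convex domination $\sum\norm{X_{\gri}}_{\B}^p\conv\abs{\gr{2^n}}\norm{X_{\gr{1}}}_{\B}^p$ (which is exactly the route of Corollary~\ref{cor:dev_ortho_stoch_dom} and costs the extra logarithm you want to avoid), or a truncation/union-bound estimate of the form $\PP\pr{\sum_iW_i>Ns}\leq N\PP\pr{W>Ns/2}+\dots$ with $W=\norm{X_{\gr{1}}}_{\B}^p$. In either case the term $N\,\PP\pr{\norm{X_{\gr{1}}}_{\B}^p>Nt^pu^p}$ has to be integrated against the weight $u^{p-1}\pr{1+\log u}^{d-1}$ on $u>1$ and then summed over the roughly $k^{d-1}$ dyadic boxes with $\abs{\gr{2^n}}=2^k$; a layer-cake computation shows that this produces quantities of the order $\E{\norm{X_{\gr{1}}}_{\B}^p\pr{1+\log_+\norm{X_{\gr{1}}}_{\B}}^{2d-1}}$, not $\E{\varphi_{p,d-1}\pr{\norm{X_{\gr{1}}}_{\B}}}$. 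This is not a bookkeeping artifact: already for i.i.d.\ heavy-tailed increments the one-big-jump lower bound $\PP\pr{\sum_iW_i>Ns}\gtrsim N\PP\pr{W>2Ns}$ shows that the right-hand side of Theorem~\ref{thm:deviation_orthomartingales}, summed over $\grn$, can be infinite while $\norm{X_{\gr{1}}}_{\B}\in\el_{p,d-1}$, so no rearrangement of the Fubini argument can recover the exponent $d-1$.

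The paper proceeds by a different mechanism, which is the missing idea. For fixed $\grn$ one truncates at level $\abs{\gr{2^n}}^{1/p}$ and restores the orthomartingale structure by setting $X'_{\gri}=\sum_{\delta\in\ens{0,1}^d}\pr{-1}^{\delta_1+\dots+\delta_d}\E{X_{\gri}\ind{\norm{X_{\gri}}_{\B}\leq\abs{\gr{2^n}}^{1/p}}\mid\Fca_{\gri-\delta}}$, with $X''_{\gri}$ defined analogously from the complementary indicator (plain truncation would destroy the martingale-difference property, so the alternating sum of conditional expectations is essential). The truncated field is treated by Chebyshev at order $r>p$, Doob's inequality and the $r$-smooth moment bound of Proposition~\ref{prop:moments_ordre_r_orthomartingale_Banach}, yielding $C\abs{\gr{2^n}}^{1-r/p}\E{\norm{X_{\gr{1}}}_{\B}^r\ind{\norm{X_{\gr{1}}}_{\B}\leq\abs{\gr{2^n}}^{1/p}}}$, while the remainder is treated by Markov at order one, yielding $C\abs{\gr{2^n}}^{1-1/p}\E{\norm{X_{\gr{1}}}_{\B}\ind{\norm{X_{\gr{1}}}_{\B}>\abs{\gr{2^n}}^{1/p}}}$. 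Because $1-r/p<0<1-1/p$, the sums over $k$ are geometric in the relevant ranges, so the $\pr{k+1}^{d-1}$ count contributes only one factor $\pr{1+\log_+\norm{X_{\gr{1}}}_{\B}}^{d-1}$ (inequalities \eqref{eq:sum_indicators_2skkd_leq} and \eqref{eq:sum_indicators_2skkd}); this is exactly how the exponent $d-1$ survives, and it is what your pipeline through Theorem~\ref{thm:deviation_orthomartingales} cannot reproduce.
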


Note that the condition $X\in\el_{p,d-1}$ cannot be removed, not even
in the independent identically distributed case (see the theorem p.~165 in
\cite{MR346881} for the normalization by $\abs{\grn}$ instead of
$\abs{\grn}^{1/p}$ and \cite{MR494431} for the latter one).

Note that the convergence in \eqref{eq:convergence_presque_sure} holds if only 
one of the coordinates of $\grn$ goes to infinity and uniformly with respect to 
the other coordinates of $\grn$. For example, if $d=2$, we have 
\begin{equation}
 \lim_{N\to\infty}\sup_{n_2\geq 
1}\frac{\norm{S_{N,n_2}}_{\B}}{N^{1/p}n_2^{1/p}}=
 \lim_{N\to\infty}\sup_{n_1\geq 
1}\frac{\norm{S_{n_1,N}}_{\B}}{n_1^{1/p}N^{1/p}}=0.
\end{equation}

We now complete this section by giving results in the spirit  
of those obtained in \cite{MR4046858}.

Let $\pr{X_{\gr{i}}}_{\gr{i}\in\Z^d}$ be an i.i.d. real-valued random field.
Theorem~4.1 in \cite{MR494431} gives the equivalence between the following two assertions for
$\alpha>1/2$ and $p\geq \max\ens{1/\alpha,1}$:
\begin{enumerate}
\item $X_{\gr{1}}$ belongs to $\mathbb L^p\log^{d-1}\mathbb L$;
\item for each positive $\varepsilon$,
\begin{equation}
\sum_{\gr{n}\in\N^d}\abs{\gr{n}}^{p\alpha-2}\PP\pr{\max_{\gr{1}\imd
\gr{i}\imd\gr{n}}
\abs{S_{\gr{i}}}>\varepsilon \abs{\gr{n}}^{\alpha}}<+\infty.
\end{equation}
\end{enumerate}

Deviation inequalities has been used in \cite{MR2794415,MR3451971} for
the question of complete convergence of orthomartingale differences random
fields.

\begin{Theorem}\label{thm:large_deviation_orthomartingale_stationary_p_leq_2}
 Let $\B$ be a separable $r$-smooth Banach space. For each identically
 distributed
$\B$-valued orthomartingale difference
 random field $\pr{X_{\gr{i}}}_{\gr{i}\in \Z^d}$ , for each positive
$\varepsilon$ and each
 $\alpha\in \left(1/r,1\right]$, the following inequality
 takes place:
\begin{equation}\label{eq:Baum_Katz_champs}
\sum_{\gr{n}\in\N^d}\abs{\gr{n}}^{r\alpha-2}\PP\pr{\max_{\gr{1}\imd
\gr{i}\imd\gr{n}}
\norm{S_{\gr{i}}}_{\B}>\varepsilon \abs{\gr{n}}^{\alpha}}\leq C\pr{r,d,\B}
\E{ \varphi_{r,2d}\pr{\frac{\norm{X_{\gr{1}}}_{\B}}{\varepsilon} }   }.
\end{equation}
\end{Theorem}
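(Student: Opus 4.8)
The plan is to pass from the sum over all rectangles to a sum over dyadic rectangles, to estimate each dyadic term by Corollary~\ref{cor:dev_ortho_stoch_dom} after replacing the sum of $r$-th powers of the increments by a single random variable in the convex order, and finally to sum the resulting tail integrals while tracking logarithmic factors.

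First I would carry out the dyadic reduction. For $\gr n=(n_\ell)_{\ell=1}^d\in\N^d$ put $k_\ell=\lceil\log_2 n_\ell\rceil$, so that $\gr k$ has non-negative integer coordinates, $\gr n\imd\gr{2^{\gr k}}$, $2^{-d}\abs{\gr{2^{\gr k}}}\leq\abs{\gr n}$, and at most $\abs{\gr{2^{\gr k}}}$ indices $\gr n$ correspond to a given $\gr k$. Since $r\alpha\leq 2$ the map $x\mapsto x^{r\alpha-2}$ is non-increasing, so with $M_{\gr k}:=\max_{\gr 1\imd\gri\imd\gr{2^{\gr k}}}\norm{S_\gri}_{\B}$ and $\varepsilon':=2^{-d\alpha}\varepsilon$ one gets
\[
\sum_{\gr n\in\N^d}\abs{\gr n}^{r\alpha-2}\PP\pr{\max_{\gr 1\imd\gri\imd\gr n}\norm{S_\gri}_{\B}>\varepsilon\abs{\gr n}^{\alpha}}
\leq 2^{2d}\sum_{\gr k}\abs{\gr{2^{\gr k}}}^{r\alpha-1}\PP\pr{M_{\gr k}>\varepsilon'\abs{\gr{2^{\gr k}}}^{\alpha}},
\]
the sum over $\gr k$ running over all $d$-tuples of non-negative integers.

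Next, since $\pr{\norm{X_\gri}_{\B}^{r}}_{\gri}$ is identically distributed, Jensen's inequality applied to the uniform average over the rectangle gives $\sum_{\gr 1\imd\gri\imd\gr{2^{\gr k}}}\norm{X_\gri}_{\B}^{r}\conv\abs{\gr{2^{\gr k}}}\,\norm{X_{\gr 1}}_{\B}^{r}$, so Corollary~\ref{cor:dev_ortho_stoch_dom} applies to the block $\gr{2^{\gr k}}$ with $p=r$, $q=r+1$, $V=\abs{\gr{2^{\gr k}}}^{1/r}\norm{X_{\gr 1}}_{\B}$ and threshold $t=\varepsilon'\abs{\gr{2^{\gr k}}}^{\alpha}$. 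Writing $\beta:=\alpha-1/r>0$ and $\PP(V>tu)=\PP\pr{\norm{X_{\gr 1}}_{\B}>\varepsilon'\abs{\gr{2^{\gr k}}}^{\beta}u}$, this bounds $\PP\pr{M_{\gr k}>\varepsilon'\abs{\gr{2^{\gr k}}}^{\alpha}}$ by $f_{r,r+1,d}(C_{r,\B})$ times $\int_0^1 u^{r}\PP\pr{\norm{X_{\gr 1}}_{\B}>\varepsilon'\abs{\gr{2^{\gr k}}}^{\beta}u}\mathrm du+\int_1^\infty u^{r-1}(1+\log u)^{d}\PP\pr{\norm{X_{\gr 1}}_{\B}>\varepsilon'\abs{\gr{2^{\gr k}}}^{\beta}u}\mathrm du$. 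I expect this to be the conceptual heart of the proof: the natural tool here is Corollary~\ref{cor:dev_ortho_stoch_dom} rather than Theorem~\ref{thm:deviation_orthomartingales}, because under mere identical distribution nothing is known about the tail of $\pr{\sum\norm{X_\gri}_{\B}^{r}}^{1/r}$, only its convex-order domination, which is precisely the hypothesis of the corollary; this is also where the extra logarithmic power in the final bound originates.

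Finally I would sum over $\gr k$: grouping the $\gr k$'s by $j:=\sum_\ell k_\ell$, of which there are $\binom{j+d-1}{d-1}$ with $\abs{\gr{2^{\gr k}}}=2^{j}$, then substituting $v=2^{j\beta}u$ and interchanging the $j$-sum with the integral by Tonelli. The crucial point is the identity $r\alpha-1-\beta r=0$, by which all powers of $2^{j}$ cancel after the substitution; what remains is the combinatorial weight $\binom{j+d-1}{d-1}$, which for the $\int_1^\infty$ term is summed over $j\leq\beta^{-1}\log_2 v$ and contributes a factor $\asymp(1+\log v)^{d}$, while for the $\int_0^1$ term the surviving geometric factor $2^{-j\beta}$ contributes only $\asymp(1+\log v)^{d-1}$. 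Multiplying by the $(1+\log v)^{d}$ already carried by Corollary~\ref{cor:dev_ortho_stoch_dom}, the dominant term is $\int_1^\infty v^{r-1}(1+\log v)^{2d}\PP\pr{\norm{X_{\gr 1}}_{\B}>\varepsilon'v}\mathrm dv$, which by $\int_1^{M}v^{r-1}(1+\log v)^{2d}\mathrm dv\leq C_{r,d}M^{r}(1+\log M)^{2d}$ and Fubini is a constant multiple of $\E{\varphi_{r,2d}\pr{\norm{X_{\gr 1}}_{\B}/\varepsilon'}}$, the contribution of small $v$ being absorbed into the same Orlicz expectation since $\varphi_{r,2d}(t)\geq t^{r}$; and $\E{\varphi_{r,2d}\pr{\norm{X_{\gr 1}}_{\B}/\varepsilon'}}\leq C_{r,d}\E{\varphi_{r,2d}\pr{\norm{X_{\gr 1}}_{\B}/\varepsilon}}$ because $\varepsilon/\varepsilon'\leq 2^{d}$ and $\varphi_{r,2d}(ct)\leq c^{r}(1+\abs{\log c})^{2d}\varphi_{r,2d}(t)$. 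The only genuinely delicate part I foresee is keeping the logarithmic powers straight across the change of variables and the Tonelli interchange; the remaining estimates are elementary. (Tracking constants, the implied constant depends on $r$, $d$, $\B$, and through $\beta=\alpha-1/r$ also on $\alpha$.)
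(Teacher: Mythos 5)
Your proposal is correct and follows essentially the same route as the paper: reduction to dyadic rectangles, the convex-order domination $\sum_{\gr{1}\imd\gri\imd\gr{2^{\gr k}}}\norm{X_{\gri}}_{\B}^r\conv\abs{\gr{2^{\gr k}}}\norm{X_{\gr{1}}}_{\B}^r$ followed by Corollary~\ref{cor:dev_ortho_stoch_dom} with $p=r$, $q=r+1$, and then summation over dyadic indices grouped by $\sum_\ell k_\ell$ (the paper's \eqref{eq:card_sum_=k}), yielding the $\varphi_{r,2d}$ Orlicz bound. The only difference is bookkeeping — you perform the final summation by the substitution $v=2^{j\beta}u$ and Tonelli, where the paper invokes its prepared inequality \eqref{eq:sum_indicators_2skkd} and then estimates the remaining $u$-integrals — and your remark that the constant also depends on $\alpha$ through $\beta=\alpha-1/r$ is accurate (the paper's proof has the same implicit dependence).
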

\begin{Remark}
One could also formulate the corresponding result where $r$ is replaced in
\eqref{eq:Baum_Katz_champs} by $1<p<r$. But this could be established in a more
general context
than ours, namely, that of stochastically dominated orthomartingale differences
random fields,
by using truncation arguments like in \cite{MR2743029}.
\end{Remark}

\begin{Theorem}\label{thm:large_deviation_orthomartingale_stationary_p_geq_2}
 Let $\B$ be a separable $r$-smooth Banach space and $s>r$. For each
identically distributed $\B$-valued orthomartingale
 difference random field $\pr{X_{\gr{i}}}_{\gr{i}\in \Z^d}$, for each positive
$\varepsilon$ and
 each $\alpha\in \left(1/r,1\right]$, the following inequality takes place
\begin{equation}\label{eq:Baum_Katz_champs_pgeq_2}
\sum_{\gr{n}\in\N^d}\abs{\gr{n}}^{s\pr{\alpha-1/r}-1}\PP\pr{\max_{\gr{1}\imd
\gr{i}\imd\gr{n}}
\norm{S_{\gr{i}}}_{\B}>\varepsilon \abs{\gr{n}}^{\alpha}}\\
\leq C\pr{r,d, \B}
\E{ \varphi_{s,d}\pr{\frac{\norm{X_{\gr{1}}}_{\B}}{\varepsilon}}  }.
\end{equation}
\end{Theorem}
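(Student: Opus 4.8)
\textbf{Proof plan for Theorem~\ref{thm:large_deviation_orthomartingale_stationary_p_geq_2}.}
The plan is to deduce the Baum--Katz type estimate from the deviation inequality of Theorem~\ref{thm:deviation_orthomartingales}, applied to the truncated random field, combined with the dyadic block decomposition that is already at work in Theorem~\ref{thm:loi_des_grands_nombres_orthomartingale}. First I would reduce the series over $\grn\in\N^d$ to a series over dyadic exponents $\grk\in\N^d$: writing $I(\grk)=\ens{\grn\colon 2^{k_\ell-1}\leq n_\ell<2^{k_\ell},\ \ell=1,\dots,d}$, the cardinality of $I(\grk)$ is $\asymp\abs{\gr{2^\grk}}$, on $I(\grk)$ the weight $\abs{\grn}^{s(\alpha-1/r)-1}$ is comparable to $\abs{\gr{2^\grk}}^{s(\alpha-1/r)-1}$, and since $\alpha\leq 1$ the maximum $\max_{\gr{1}\imd\gri\imd\grn}\norm{S_{\gri}}_\B$ for $\grn\in I(\grk)$ is bounded by $\max_{\gr{1}\imd\gri\imd\gr{2^\grk}}\norm{S_{\gri}}_\B$ while $\abs{\grn}^\alpha\geq\abs{\gr{2^{\grk-\gr{1}}}}^\alpha\asymp\abs{\gr{2^\grk}}^\alpha$. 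Hence the left-hand side of \eqref{eq:Baum_Katz_champs_pgeq_2} is bounded, up to a constant depending on $s,\alpha,d$, by
\[
 \sum_{\grk\in\N^d}\abs{\gr{2^\grk}}^{s(\alpha-1/r)}
 \PP\pr{\max_{\gr{1}\imd\gri\imd\gr{2^\grk}}\norm{S_{\gri}}_\B>\varepsilon'\abs{\gr{2^\grk}}^\alpha}
\]
for a suitable $\varepsilon'$ proportional to $\varepsilon$.

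Next, for a fixed block $\gr{2^\grk}$ I would apply Theorem~\ref{thm:deviation_orthomartingales} with $t=\varepsilon'\abs{\gr{2^\grk}}^\alpha$, the exponent $p$ of the theorem taken to be $r$ (so that $\B$ is $r$-smooth and $C_{r,\B}<\infty$), and the auxiliary parameter $q$ of the theorem chosen to be $q=s$ (legitimate since $s>r$). Using identical distribution, $\sum_{\gr{1}\imd\gri\imd\gr{2^\grk}}\norm{X_{\gri}}_\B^r$ has the same law as a sum of $\abs{\gr{2^\grk}}$ copies of $\norm{X_{\gr{1}}}_\B^r$, and the right-hand side of \eqref{eq:deviation_orthomartingale} becomes, after the substitution $v=\abs{\gr{2^\grk}}^{1/r}u$, an expression of the form
\[
 f_{r,s,d}(C_{r,\B})\abs{\gr{2^\grk}}^{-s/r}\!\int_0^{\abs{\gr{2^\grk}}^{1/r}}\!\! v^{s-1}\PP\!\pr{\Sigma_\grk>\varepsilon'\abs{\gr{2^\grk}}^{\alpha-1/r}v}dv
 +f_{r,s,d}(C_{r,\B})\abs{\gr{2^\grk}}^{-1}\!\int_{\abs{\gr{2^\grk}}^{1/r}}^{\infty}\!\! v^{r-1}(1+\tfrac1r\log\abs{\gr{2^\grk}}+\log v)^{d-1}\PP\!\pr{\Sigma_\grk>\varepsilon'\abs{\gr{2^\grk}}^{\alpha-1/r}v}dv,
\]
where $\Sigma_\grk:=(\sum_{\gr{1}\imd\gri\imd\gr{2^\grk}}\norm{X_{\gri}}_\B^r)^{1/r}$. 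Since $\PP(\Sigma_\grk>v)\leq\abs{\gr{2^\grk}}\PP(\norm{X_{\gr{1}}}_\B>v\abs{\gr{2^\grk}}^{-1/r}\cdot c)$ is too lossy, I would instead replace $\PP(\Sigma_\grk>s)$ directly by $\abs{\gr{2^\grk}}\PP(\norm{X_{\gr{1}}}_\B>s)$ only after noting $\Sigma_\grk\geq\max_{\gr{1}\imd\gri\imd\gr{2^\grk}}\norm{X_{\gri}}_\B$, giving $\PP(\Sigma_\grk>s)\leq\abs{\gr{2^\grk}}\PP(\norm{X_{\gr{1}}}_\B>s)$. Multiplying by $\abs{\gr{2^\grk}}^{s(\alpha-1/r)}$ and summing over $\grk\in\N^d$, one is left with a double sum over $\grk$ and the integration variable; exchanging the order of summation, for each fixed value $y=\norm{X_{\gr{1}}}_\B$ the inner geometric-type sum over $\grk$ of $\abs{\gr{2^\grk}}^{s(\alpha-1/r)+1-s/r}$ (resp.\ $+1-1$) times indicator that $\varepsilon'\abs{\gr{2^\grk}}^{\alpha-1/r}v\leq y$, integrated against $v^{s-1}dv$ (resp.\ $v^{r-1}(\dots)^{d-1}dv$), produces $\asymp y^s$ from the first piece and $\asymp y^r(1+\log y)^d$ from the second; the extra $(1+\log\abs{\gr{2^\grk}})^{d-1}$ in the second integral, once the number of lattice points $\grk$ with $\abs{\gr{2^\grk}}$ in a dyadic range is counted ($\asymp (\log)^{d-1}$ of them), upgrades the logarithmic power from $d-1$ to $2d-1$ in the first term and accounts for the $\varphi_{s,d}$ on the right-hand side; the factor $\alpha>1/r$ makes all the geometric sums in $\abs{\gr{2^\grk}}$ converge with the right exponent. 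Summing the two contributions gives the bound $C(r,d,\B)\,\E{\varphi_{s,d}(\norm{X_{\gr{1}}}_\B/\varepsilon)}$, possibly after enlarging the constant to absorb the passage from $\varepsilon'$ to $\varepsilon$ and the $\varphi_{r,\cdot}$ versus $\varphi_{s,\cdot}$ comparison (here one uses $s>r$ and $\E{\varphi_{r,2d-1}(X)}<\infty$ being implied by $\E{\varphi_{s,d}(X)}<\infty$, since $t^r(1+\abs{\log t})^{2d-1}\lesssim t^s(1+\abs{\log t})^d$ for $t$ bounded away from $0$ and $\lesssim$ for $t$ near $0$ as well when $s>r$).

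The main obstacle I anticipate is the bookkeeping of the logarithmic powers: one must track exactly how the $(1+\log u)^{d-1}$ inside the theorem's second integral interacts, after the dyadic substitution, with the counting multiplicity $\asymp(\log)^{d-1}$ of exponents $\grk$ lying on a given ``diagonal'' $\abs{\gr{2^\grk}}\asymp 2^m$, and to confirm that the product is still dominated by $\varphi_{s,d}$ rather than requiring $\varphi_{s,2d-1}$. The point is that the geometric decay in $\abs{\gr{2^\grk}}$ furnished by the strict inequality $\alpha>1/r$ (and by $s>r$, which makes $s/r-s(\alpha-1/r)-1>0$ in the first piece and analogously in the second) kills all but finitely many scales for each fixed value of $y$, so only the $(\log y)^{d}$-many relevant scales survive and the $(\log\abs{\gr{2^\grk}})^{d-1}=O((\log y)^{d-1})$ is absorbed into one extra logarithmic power, yielding exactly $\varphi_{s,d}$. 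Aside from this, the argument is routine: a dyadic reduction, one application of Theorem~\ref{thm:deviation_orthomartingales}, Fubini, and summation of geometric series.
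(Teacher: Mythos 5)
Your overall architecture --- dyadic reduction of the series, one application of a deviation inequality per block with exponent $p=r$ and an auxiliary parameter near $s$, then Fubini, the count of exponents with $\sum_\ell N_\ell=k$, and geometric summation driven by $\alpha>1/r$ and $s>r$ --- is essentially the paper's route. But the step you use to transfer the tail of $\Sigma_{\gr{N}}:=\bigl(\sum_{\gr{1}\imd\gri\imd\gr{2^{\gr{N}}}}\norm{X_{\gri}}_{\B}^r\bigr)^{1/r}$ to the tail of $\norm{X_{\gr{1}}}_{\B}$ is false, and it is the crux of the proof. From $\Sigma_{\gr{N}}\geq\max_{\gr{1}\imd\gri\imd\gr{2^{\gr{N}}}}\norm{X_{\gri}}_{\B}$ you can only conclude $\PP\pr{\Sigma_{\gr{N}}>v}\geq\PP\pr{\max_{\gri}\norm{X_{\gri}}_{\B}>v}$; it does not yield the claimed upper bound $\PP\pr{\Sigma_{\gr{N}}>v}\leq\abs{\gr{2^{\gr{N}}}}\PP\pr{\norm{X_{\gr{1}}}_{\B}>v}$, which is simply wrong: if $\norm{X_{\gri}}_{\B}\equiv 1$ almost surely, then for $v=2$ and any block with at least $2^r$ points the left-hand side equals $1$ while the right-hand side is $0$. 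The tail of the sum is genuinely heavier than the tail of the maximum, and the only elementary substitute, the pigeonhole bound $\PP\pr{\Sigma_{\gr{N}}>v}\leq\abs{\gr{2^{\gr{N}}}}\PP\pr{\norm{X_{\gr{1}}}_{\B}>v\abs{\gr{2^{\gr{N}}}}^{-1/r}}$, you correctly discard as too lossy: the prefactor $\abs{\gr{2^{\gr{N}}}}$ makes the series over blocks diverge (with your choice $q=s$, already the contribution of the integral over $u\in(0,1)$ is bounded below by a constant times $\sum_k k^{d-1}2^{k}$). So as written the argument has no valid control of $\PP\pr{\Sigma_{\gr{N}}>v}$.

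The missing idea is the convex-ordering domination, which is exactly why the paper proves Corollary~\ref{cor:dev_ortho_stoch_dom} rather than using Theorem~\ref{thm:deviation_orthomartingales} directly here. Since the variables $\norm{X_{\gri}}_{\B}^r$ are identically distributed (no independence is needed), an averaging/Jensen argument gives $\sum_{\gr{1}\imd\gri\imd\gr{2^{\gr{N}}}}\norm{X_{\gri}}_{\B}^r\conv\abs{\gr{2^{\gr{N}}}}\norm{X_{\gr{1}}}_{\B}^r$, and the tail inequality \eqref{eq:conv_ordering_tails_integrales} converts this into a usable bound: the threshold is rescaled by $\abs{\gr{2^{\gr{N}}}}^{-1/r}$, as in the pigeonhole bound, but the multiplicative factor $\abs{\gr{2^{\gr{N}}}}$ is replaced by one extra logarithmic factor (composing with $T_{1,\infty,0}$ raises the power of the logarithm from $d-1$ to $d$). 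The paper thus applies Corollary~\ref{cor:dev_ortho_stoch_dom} with $p=r$, $q=s+1$ and $V^r=\abs{\gr{2^{\gr{N}}}}\norm{X_{\gr{1}}}_{\B}^r$ on each dyadic block; this is also why the Orlicz function in \eqref{eq:Baum_Katz_champs_pgeq_2} is $\varphi_{s,d}$ and not $\varphi_{s,d-1}$. Once this replacement is made, your remaining bookkeeping (the $c_dk^{d-1}$ count, \eqref{eq:sum_indicators_2skkd}, and the two integral estimates handling the logarithms) goes through as in the paper.
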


\begin{Remark}
On one hand, the results in \cite{MR3451971} require boundedness of
the conditional moments, whereas we do not. On the other hand, their result do not require
that $\pr{\abs{X_{\gr{i}}}}_{\gr{i}\in\Z^d}$ is identically distributed hence
the
results are not directly comparable.
\end{Remark}

\subsection{Law of large number for weighted sums of orthomartingale difference
random fields}

In this subsection, we study the convergence rates in the law of large numbers 
for identically distributed orthomartingale difference random fields. As we 
work with Banach space valued random variables, we
may consider sums of linear bounded operators from a 
Banach space $\B$ to itself. 

\begin{Theorem}\label{thm:law_large_numbers_weighted_sums}
Let $\pr{\B,\norm{\cdot}_{\B}}$ be a separable $r$-smooth 
Banach space for some 
$r\in (1,2]$. For $\gri\in\Z^d$ and $n\geq 1$, let $A_{n,\gri}\colon\B\to\B$ 
be a linear bounded operator and denote its norm by 
$\norm{A_{n,\gri} 
}_{\Bca\pr{\B}}:=\sup\ens{\norm{A_{n,\gri}\pr{x}}_{\B}/\norm{x}_{\B }, 
x\in\B\setminus\ens{0}  
}$. Let $1<p<r$ and let $C_{n,p}:=\pr{\sum_{\gri\in\Z}
\norm{A_{n,\gri} 
}_{\Bca\pr{\B}}^p}^{1/p}$. 
Let $\pr{X_{\gri}}_{\gri\in\Z^d}$ be an identically 
distributed $\B$-valued
orthomartingale difference random field and assume that 
$\norm{X_{\gr{1}}}\in\el^s$ for some $s>p$. Then 
for each positive $\varepsilon$ and each increasing sequence of positive 
numbers $\pr{R_n}_{n\geq 1}$ such that $R_n\to\infty$, 
\begin{equation}\label{eq:law_large_numbers_weighted_sums}
 \sum_{n\geq 1}\pr{R_n^s-R_{n-1}^s}
 \PP\pr{\norm{\sum_{\gri\in\Z^d}A_{n,\gri}\pr{X_{\gri}} }_{\B}>\varepsilon 
C_n^{1/p} R_n}<\infty.
\end{equation}

\end{Theorem}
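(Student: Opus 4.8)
The plan is to reduce the weighted‑sum statement to the deviation inequality of Theorem~\ref{thm:deviation_orthomartingales} applied in a suitable Banach space and then to sum a Baum--Katz type series. First I would fix $n$ and regard $Y^{(n)}_{\gri}:=A_{n,\gri}(X_{\gri})$; since each $A_{n,\gri}$ is $\Fca$-measurable (deterministic) and bounded linear, and $X_{\gri}$ is $\Fca_{\gri}$-measurable with $\E{X_{\gri}\mid\Fca_{\gri-\gr{e_\ell}}}=0$, linearity and boundedness of the conditional expectation give $\E{A_{n,\gri}(X_{\gri})\mid\Fca_{\gri-\gr{e_\ell}}}=A_{n,\gri}(\E{X_{\gri}\mid\Fca_{\gri-\gr{e_\ell}}})=0$, so $\pr{Y^{(n)}_{\gri}}_{\gri}$ is again a $\B$-valued orthomartingale difference random field for the same completely commuting filtration. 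The indexing set in the theorem is a rectangle $\gr{1}\imd\gri\imd\gr{N}$; since the $A_{n,\gri}$ are summable in norm, one passes to the full $\Z^d$ (or $\N^d$) sum by a truncation/limiting argument, applying \eqref{eq:deviation_orthomartingale} on $\ens{-N,\dots,N}^d$ and letting $N\to\infty$, the right‑hand side converging by dominated convergence.

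Next I would feed this into Theorem~\ref{thm:deviation_orthomartingales} with $t=\varepsilon C_{n,p}R_n$ (and, say, $q=s+1>p$, legitimate since $p<r$ and $s>p$). The key quantity controlling the right‑hand side is
\[
 Z_n:=\pr{\sum_{\gri\in\Z^d}\norm{A_{n,\gri}(X_{\gri})}_{\B}^p}^{1/p}
 \leq \pr{\sum_{\gri\in\Z^d}\norm{A_{n,\gri}}_{\Bca(\B)}^p\norm{X_{\gri}}_{\B}^p}^{1/p}.
\]
Since the $X_{\gri}$ are identically distributed with $\norm{X_{\gr{1}}}_{\B}\in\el^s$, one has $\E{Z_n^p}\le C_{n,p}^p\,\E{\norm{X_{\gr{1}}}_{\B}^p}$, and more usefully the scaled variable $Z_n/C_{n,p}$ has $\el^s$‑norm bounded independently of $n$: $\E{(Z_n/C_{n,p})^s}\le \E{\norm{X_{\gr{1}}}_{\B}^s}$ by the triangle inequality in $\el^{s/p}$ applied to the weights $\norm{A_{n,\gri}}_{\Bca(\B)}^p/C_{n,p}^p$ (which sum to one). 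Thus, dividing through by $C_{n,p}$ throughout and writing $W_n:=Z_n/C_{n,p}$, the probability on the left of \eqref{eq:law_large_numbers_weighted_sums} is bounded, via \eqref{eq:deviation_orthomartingale}, by $f_{p,q,d}(C_{p,\B})$ times
\[
 \int_0^1 u^{q-1}\PP\pr{W_n>\varepsilon R_n u}\,du+\int_1^\infty u^{p-1}(1+\log u)^{d-1}\PP\pr{W_n>\varepsilon R_n u}\,du.
\]

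Then I would multiply by $R_n^s-R_{n-1}^s$, sum over $n\ge 1$, and exchange sum and integral (Tonelli). Because $\pr{R_n}$ is increasing with $R_n\to\infty$, the sums $\sum_{n\ge1}(R_n^s-R_{n-1}^s)\PP\pr{W_n>\varepsilon R_n u}$ are comparable to $\int_0^\infty s r^{s-1}\sup_n\PP\pr{W_n> \varepsilon r u}\,dr$; using $\sup_n\PP(W_n>v)\le v^{-s}\sup_n\E{W_n^s}\le v^{-s}\E{\norm{X_{\gr{1}}}_{\B}^s}$ is too crude near $v=0$, so instead I would argue pointwise: for fixed $u$, $\sum_{n\ge 1}(R_n^s-R_{n-1}^s)\PP(W_n>\varepsilon R_n u)\le \varepsilon^{-s}u^{-s}\E{W_n^s}\cdot(\text{const})$—more precisely one dominates each term using that $r\mapsto\PP(W>\varepsilon r u)$ is nonincreasing and $\sum(R_n^s-R_{n-1}^s)g(R_n)\le \int_0^\infty g(r)\,d(r^s)$ for nonincreasing $g$, then $\int_0^\infty \PP(W>\varepsilon r u)\,d(r^s)=\varepsilon^{-s}u^{-s}\E{W^s}$ after the substitution $v=\varepsilon r u$, uniformly in $n$ after taking $\sup_n\E{W_n^s}\le\E{\norm{X_{\gr{1}}}_{\B}^s}<\infty$. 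Plugging $u^{-s}$ back in leaves $\int_0^1 u^{q-1-s}du$, finite precisely because $q=s+1$, and $\int_1^\infty u^{p-1-s}(1+\log u)^{d-1}du$, finite because $s>p$. This yields a bound of the form $C(r,d,\B)\,\varepsilon^{-s}\E{\norm{X_{\gr{1}}}_{\B}^s}<\infty$, which is the claim. The main obstacle I anticipate is the interchange of summation over $n$ with the $u$‑integral and the uniform‑in‑$n$ control of $\sum_n(R_n^s-R_{n-1}^s)\PP(W_n>\varepsilon R_nu)$: this needs the Abel/integral comparison for nonincreasing functions together with the uniform $\el^s$ bound on $W_n$, and care that the convexity‑ordering subtlety from Corollary~\ref{cor:dev_ortho_stoch_dom} is \emph{not} needed here since $Z_n$ is an honest sum, so the cleaner exponents $d-1$ (not $d$) in \eqref{eq:deviation_orthomartingale} apply.
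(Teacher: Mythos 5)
Your reduction of the problem is fine up to the last step: $\pr{A_{n,\gri}\pr{X_{\gri}}}_{\gri}$ is indeed again an orthomartingale difference random field, Theorem~\ref{thm:deviation_orthomartingales} applies with $q=s+1$, and the bound $\sup_n\E{W_n^s}\leq \E{\norm{X_{\gr{1}}}_{\B}^s}$ for $W_n=Z_n/C_{n,p}$ is correct (Jensen/Minkowski with the weights $\norm{A_{n,\gri}}_{\Bca\pr{\B}}^p/C_{n,p}^p$). The genuine gap is the summation over $n$. Your comparison $\sum_n\pr{R_n^s-R_{n-1}^s}g\pr{R_n}\leq\int_0^\infty g\pr{r}\,d\pr{r^s}$ needs a \emph{single} nonincreasing $g$, i.e.\ a single random variable; here the variable $W_n$ changes with $n$, and a uniform bound on $\E{W_n^s}$ is not enough to conclude. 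Concretely, take $R_n^s=n$ and any random variables $W_n=2R_n\ipr{A_n}$ with $\PP\pr{A_n}=\pr{2R_n}^{-s}$: then $\E{W_n^s}=1$ for all $n$, yet $\sum_n\pr{R_n^s-R_{n-1}^s}\PP\pr{W_n>R_n}=\sum_n 2^{-s}n^{-1}=\infty$. So the step ``uniformly in $n$ after taking $\sup_n\E{W_n^s}$'' does not close the argument; nothing in your proof uses the structure of $W_n$ beyond this moment bound at that point.

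The missing ingredient is precisely the convex-ordering step you declare unnecessary. The paper proves $\sum_{\gri}\norm{A_{n,\gri}\pr{X_{\gri}}}_{\B}^p\conv C_{n,p}^p\norm{X_{\gr{1}}}_{\B}^p$ (convexity plus identical distribution) and then applies Corollary~\ref{cor:dev_ortho_stoch_dom} with $V=C_{n,p}\,^{1/p}$-normalised, i.e.\ $V=C_n^{1/p}\norm{X_{\gr{1}}}_{\B}$; this replaces the $n$-dependent tail $\PP\pr{W_n>\varepsilon R_n u}$ by the tail of the \emph{fixed} variable $\norm{X_{\gr{1}}}_{\B}$, at the harmless price of the exponent $d$ instead of $d-1$ in the logarithmic factor (still integrable against $u^{p-1-s}$ since $s>p$). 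After that substitution, your plan works verbatim: for each fixed $u$, the elementary inequality $\sum_n\pr{R_n^s-R_{n-1}^s}\PP\pr{Y>R_n}\leq\E{Y^s}$ with $Y=\norm{X_{\gr{1}}}_{\B}/\pr{\varepsilon u}$, then Tonelli and the convergence of $\int_0^1 u^{q-1-s}du$ and $\int_1^\infty u^{p-1-s}\pr{1+\log u}^{d}du$. So the route is salvageable, but only by reinstating the stochastic-domination (convex ordering) reduction to a single random variable before summing in $n$.
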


Let us give an example where Theorem~\ref{thm:law_large_numbers_weighted_sums} 
can be used. Suppose that $\pr{\Lambda_n}_{n\geq 1}$ is  
a sequence of subsets of $\Z^d$ such that 
$\pr{\operatorname{Card}\pr{\Lambda_n}}_{n\geq 1}$ forms an increasing sequence 
(note that we do not assume the sequence $\pr{\Lambda_n}_{n\geq 1}$ to be 
increasing). Let $A_{n,\gri}$ be the identity operator if $\gri$ belongs to 
$\Lambda_n$ and $A_{n,\gri}=0$ otherwise. For a positive 
$\gamma$, define $R_n=\operatorname{Card}\pr{\Lambda_n}^\gamma$. 
Then \eqref{eq:law_large_numbers_weighted_sums} reads
\begin{equation}\label{eq:law_large_numbers_sums_on_sets}
 \sum_{n\geq 
1}\pr{\operatorname{Card}\pr{\Lambda_n}^{s\gamma}-
\operatorname{Card}\pr{\Lambda_{n -1} } ^ { s \gamma}}
 \PP\pr{\norm{\sum_{\gri\in\Lambda_n} X_{\gri} }_{\B}>\varepsilon  
\operatorname{Card}\pr{
\Lambda_n}^{\frac 1p+\gamma}}<\infty.
\end{equation}

\section{Proofs}

\subsection{Proof of Proposition~\ref{prop:exemple_filtra_commutantes}}

As pointed out before, it suffices to prove the second item.
We will use the following lemma:
\begin{Lemma}\label{lem:esperance_conditionnelle_produit}
 Let $\Gca_\ell, 1\leq \ell\leq L$ be independent sub-$\sigma$-algebras 
 of $\Fca$, where $\pr{\Omega,\Fca,\PP}$ is a probability space. For each 
$\ell\in\ens{1,\dots, L}$, let $\Gca'_\ell$ be a sub-$\sigma$-algebra of 
$\Gca_\ell$ and $A_\ell\in\Gca_\ell$. Then 
\begin{equation}\label{eq:esperance_conditionnelle_produit}
 \E{\prod_{\ell=1}^L  \ipr{A_\ell}\mid \bigvee_{\ell=1}^L \Gca'_\ell  }
 =\prod_{\ell=1}^L \E{\ipr{A_\ell} \mid \Gca'_\ell }.
\end{equation}
\end{Lemma}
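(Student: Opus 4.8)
The plan is to verify the defining property of conditional expectation: I must show that the right-hand side $Z:=\prod_{\ell=1}^L \E{\ipr{A_\ell}\mid\Gca'_\ell}$ is $\bigvee_{\ell=1}^L\Gca'_\ell$-measurable and that $\E{Z\ipr{B}}=\E{\prod_{\ell=1}^L\ipr{A_\ell}\ipr{B}}$ for every $B\in\bigvee_{\ell=1}^L\Gca'_\ell$. Measurability is immediate, since each factor $\E{\ipr{A_\ell}\mid\Gca'_\ell}$ is $\Gca'_\ell$-measurable, hence $\bigvee_{\ell=1}^L\Gca'_\ell$-measurable. For the integral identity, the key structural fact is that the classes $\Gca'_\ell$ are mutually independent (being sub-$\sigma$-algebras of the independent $\Gca_\ell$), so $\bigvee_{\ell=1}^L\Gca'_\ell$ is generated by the $\pi$-system $\ens{\bigcap_{\ell=1}^L B_\ell\mid B_\ell\in\Gca'_\ell}$. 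By the monotone class / $\pi$-$\lambda$ argument it suffices to check the identity for $B=\bigcap_{\ell=1}^L B_\ell$ with $B_\ell\in\Gca'_\ell$.

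For such a $B$, I would compute both sides using independence. On the left,
\[
\E{Z\ipr{B}}=\E{\prod_{\ell=1}^L \E{\ipr{A_\ell}\mid\Gca'_\ell}\ipr{B_\ell}},
\]
and since $\E{\ipr{A_\ell}\mid\Gca'_\ell}\ipr{B_\ell}$ is $\Gca'_\ell$-measurable and these $\sigma$-algebras are independent, the expectation of the product factorizes into $\prod_{\ell=1}^L\E{\E{\ipr{A_\ell}\mid\Gca'_\ell}\ipr{B_\ell}}$, which by the definition of conditional expectation (using $B_\ell\in\Gca'_\ell$) equals $\prod_{\ell=1}^L\E{\ipr{A_\ell}\ipr{B_\ell}}$. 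On the right,
\[
\E{\prod_{\ell=1}^L\ipr{A_\ell}\ipr{B}}=\E{\prod_{\ell=1}^L\ipr{A_\ell}\ipr{B_\ell}}=\prod_{\ell=1}^L\E{\ipr{A_\ell}\ipr{B_\ell}},
\]
again by independence of the $\Gca_\ell$ (note $A_\ell\cap B_\ell\in\Gca_\ell$). The two sides agree, which completes the verification.

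The only genuine subtlety — the step I would be most careful about — is the reduction to rectangles $B=\bigcap_\ell B_\ell$: one must note that this family is a $\pi$-system generating $\bigvee_\ell\Gca'_\ell$, and that the collection of $B$ for which the identity holds is a $\lambda$-system (closure under complements and countable disjoint unions follows from dominated convergence, all integrands being bounded by $1$), so Dynkin's theorem applies. Everything else is a direct application of independence and the tower property. Having established Lemma~\ref{lem:esperance_conditionnelle_produit}, the second item of Proposition~\ref{prop:exemple_filtra_commutantes} follows by approximating $Y\in\mathbb L^1$ by linear combinations of products of indicators $\prod_\ell\ipr{A_\ell}$ with $A_\ell\in\Fca^{(\ell)}$, applying the lemma with $\Gca'_\ell$ the relevant coordinate $\sigma$-algebras on each side, and using that each $\pr{\Fca^{(\ell)}_{\gri^{(\ell)}}}$ is itself completely commuting.
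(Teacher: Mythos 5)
Your proposal is correct and follows essentially the same route as the paper: verify the defining property of conditional expectation, reduce via the $\pi$-system of rectangles $\bigcap_{\ell}B_\ell$ (Dynkin), and factorize both sides using independence of the $\Gca_\ell$ (hence of the $\Gca'_\ell$) together with $B_\ell\in\Gca'_\ell$. The extra remark on the $\lambda$-system closure is a detail the paper leaves implicit, but there is no substantive difference.
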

\begin{proof}
 Note that the random variable in the right hand side of 
\eqref{eq:esperance_conditionnelle_produit} is measurable with respect to 
$\bigvee_{\ell=1}^L \Gca'_\ell$ hence it suffices to show that for each $G
\in \bigvee_{\ell=1}^L \Gca'_\ell$,  
\begin{equation}\label{eq:esperance_conditionnelle_produit_etape}
  \E{\prod_{\ell=1}^L  \ipr{A_\ell} \ipr{G} }
 =\E{\prod_{\ell=1}^L \E{\ipr{A_\ell} \mid \Gca'_\ell } \ipr{G}}.
\end{equation}
Since $\bigvee_{\ell=1}^L \Gca'_\ell$ is generated by the $\pi$-system of sets 
of the form $\bigcap_{\ell=1}^LG_\ell$, $G_\ell\in \Gca'_\ell$, it suffices to 
check \eqref{eq:esperance_conditionnelle_produit_etape} when $G$ has this form, 
that is, 
\begin{equation}\label{eq:esperance_conditionnelle_produit_etape2}
 \forall G_1\in\Gca'_1,\dots,G_L\in\Gca'_L, \quad 
   \E{\prod_{\ell=1}^L  \ipr{A_\ell} \prod_{\ell'=1}^L\ipr{G_{\ell'}} }
   =\E{\prod_{\ell=1}^L \E{\ipr{A_\ell} \mid \Gca'_\ell 
}\prod_{\ell'=1}^L\ipr{G_{\ell'}}}.
\end{equation}
The left hand side of \eqref{eq:esperance_conditionnelle_produit_etape2} 
is $\prod_{\ell=1}^L\PP\pr{A_\ell\cap G_\ell}$ since the sets $A_\ell\cap 
G_\ell$ belong to $\Gca_\ell$. Then note that 
\begin{equation}
 \PP\pr{A_\ell\cap G_\ell}
 =\E{ \ipr{G_\ell}\E{\ipr{A_\ell}\mid\Gca'_\ell  } }
\end{equation}
hence 
\begin{equation}
  \E{\prod_{\ell=1}^L  \ipr{A_\ell} \prod_{\ell'=1}^L\ipr{G_{\ell'}} }
  =\prod_{\ell=1}^L \E{ \ipr{G_\ell}\E{\ipr{A_\ell}\mid\Gca'_\ell  } }
\end{equation}
and an other use of the independence of the $\sigma$-algebras gives 
\eqref{eq:esperance_conditionnelle_produit_etape2} and finishes the proof of 
Lemma~\ref{lem:esperance_conditionnelle_produit}.
\end{proof}

Let $\gri\in\Z^d,\grj\in\Z^d$ of the form 
$\gri=\pr{\gr{i^{\pr{\ell}}}}_{\gr{i^{\pr{\ell}} } \in\Z^{d_\ell}}$ and 
$\grj=\pr{\gr{j^{\pr{\ell}}}}_{\gr{j^{\pr{\ell}} } \in\Z^{d_\ell}}$
Since item 1 of Definition~\ref{dfn:commutante} is clear, it remains to check 
\eqref{eq:def_filtration_commutante}.
 To do so, it suffices to check it when 
$Y$ is $\Fca_{\gri}$-measurable. By a standard approximation argument and  
Dynkin's theorem, it suffices to do it when 
$Y=\prod_{\ell=1}^L\ipr{A_\ell}$, where 
$A_\ell\in \Fca^{\pr{\ell}}_{\gr{i^{\pr{\ell}}}}$. Applying 
Lemma~\ref{lem:esperance_conditionnelle_produit} to these $A_\ell$, 
$\Gca'_\ell=\Fca_{\gr{j^{\pr{\ell}}}}^{\pr{\ell}}$ and 
$\Gca_\ell=\Fca_{\max\ens{\gr{i^{\pr{\ell}}} ,\gr{j^{\pr{\ell}}}} }$, where the 
maximum is taken coordinatewise, we get
\begin{align}
 \E{\prod_{\ell=1}^L\ipr{A_\ell} \mid\Fca_{\grj}}&=\prod_{\ell=1}^L
 \E{\ipr{A_\ell} \mid\Fca_{ \gr{j^{\pr{\ell}} }}^{\pr{\ell}}} \\
 &=\prod_{\ell=1}^L
 \E{\ipr{A_\ell} \mid\Fca_{\min\ens{\gr{i^{\pr{\ell}}} ,\gr{j^{\pr{\ell}}}} }}\\
 &=  \E{\prod_{\ell=1}^L\ipr{A_\ell} \mid \bigvee_{\ell=1}^L 
\Fca_{\min\ens{\gr{i^{\pr{\ell}}} ,\gr{j^{\pr{\ell}}}} }  }
\end{align}
where the second equality uses the commutativity of the filtration 
$\pr{\Fca_{ \gr{i^{\pr{\ell}} }}^{\pr{\ell}}}_{\gr{i^{\pr{\ell}} 
}\in\Z^{d_\ell}    }$ and the third one by an other use of 
Lemma~\ref{lem:esperance_conditionnelle_produit} to this time 
$\Gca'_\ell=\Fca_{\min\ens{\gr{i^{\pr{\ell}}} ,\gr{j^{\pr{\ell}}}} }$. This 
ends the proof of Proposition~\ref{prop:exemple_filtra_commutantes}.

\subsection{Proof of Theorem~\ref{thm:deviation_orthomartingales}}

We will proceed by induction over the dimension $d$. We will actually 
show the following assertion $A\pr{d}$ by induction:
" For each $1<p\leq 2$ and 
each $q>p$, there exists a function $f_{p,q,d}\colon \pr{0,\infty}\to\pr{0,\infty}$ 
such that if $\pr{\B,\norm{\cdot}_{\B}}$ is a separable Banach space for 
which $C_{p,\B}$ defined as in \eqref{eq:definition_constant_Banach_space} is 
finite and $\pr{X_{\gri}}_{\gri\in\Z^d}$ is an orthomartingale difference random field taking values in $\B$, then  
\begin{multline}\label{eq:deviation_orthomartingale_A(d)}
 \PP\pr{\max_{\gr{1}\imd \grn\imd \gr{N}}\norm{\sum_{\gr{1}\imd \gri\imd
 \grn }X_{\gri }  }_{\B} >t
 }
  \leq f_{p,q,d}\pr{C_{p,\B}}
 \int_0^1  u^{q-1} \mathbb
P\pr{
 \pr{\sum_{\gr{1}\imd \gri\imd \gr{N}}\norm{X_{\gri}}_{\B}^p}^{\frac 1p}>tu  }
\mathrm{d}u+\\ +
f_{p,q,d}\pr{C_{p,\B}}
 \int_1^\infty u^{p-1}\pr{1+\log u}^{d-1}\mathbb
P\pr{
 \pr{\sum_{\gr{1}\imd \gri\imd \gr{N}}\norm{X_{\gri}}_{\B}^p}^{\frac 1p}>tu  }
\mathrm{d}u."
\end{multline}
For an $r$-smooth, the constant $C_{p,\B}$ is finite for all $p\in (1,r]$ hence 
$A\pr{d}$ contains the statement of Theorem~\ref{thm:deviation_orthomartingales}.

\subsubsection{The case $d=1$}
The statement of  $A\pr{1}$ is
exactly Proposition~\ref{prop:deviation_martingales_power}, for
which a proof is given right after.

\subsubsection{Induction step}

We will proceed by induction on the dimension $d$. We will denote by $\gri$
the elements of $\Z^d$ and $\pr{\gri,i_{d+1}}$ (and similarly for other
letters) the elements of $\Z^{d+1}$.

We assume that $A\pr{d}$ holds. This
means that for each   $1<p\leq 2$, $q>p$ and
each   separable
Banach space $\pr{\B,\norm{\cdot}_{\B}}$ for which  $C_{p,\B}$ defined as in \eqref{eq:definition_constant_Banach_space} is 
finite, there exists
a function $f_{p,q,d}\colon \pr{0,\infty}\to\pr{0,\infty}$ such that if $\pr{X_{\gri}}_{\gri\in
\Z^d}$ is a $\B$-valued orthomartingale martingale
differences
random field with respect to a completely commuting filtration
$\pr{\Fca_{\gri}}_{\gri\in\Z^d}$, then for each
$1<p\leq r$,
$q>p$ and  $x>0$, \eqref{eq:deviation_orthomartingale_A(d)} holds.

Let $\B$ be such a Banach space and let $1< p\leq 2$ and $q>p$. In order
to complete the induction step we have to find a function
$f_{p,q,d+1}\pr{C_{p,\B}}$ such that if
$\pr{X_{\gri,i_{d+1}}}_{\gri\in\Z^d,i_{d+1}\in\Z}$ is an orthomartingale
differences random field with respect to the completely commuting filtration
$\pr{\Fca_{\gri,i_{d+1}}}_{\gri\in\Z^d,i_{d+1}\in\Z}$, then
\begin{multline}\label{eq:deviation_orthomartingale_dim_d+1}
 \PP\pr{\max_{\gr{1}\imd \grn\imd \gr{N}}\max_{1\leq
n_{d+1}\leq N_{d+1}}\norm{\sum_{\gr{1}\imd \gri\imd
 \grn }\sum_{i_{d+1}=1}^{n_{d+1}}X_{\gri,i_{d+1} }  }_{\B} >t
 }\\
  \leq f_{p,q,d+1}\pr{C_{p,\B}}
 \int_0^\infty \min\ens{u^{q-1},u^{p-1}}\pr{1+\log u}^{d+1-1}\mathbb P\pr{
 \pr{\sum_{\gr{1}\imd \gri\imd
\gr{N}}\sum_{i_{d+1}=1}^{N_{d+1}}\norm{X_{\gri,i_{d+1}}}_{\B}^p}^{1/p}>tu  }
\mathrm{d}u.
\end{multline}
Let $\pr{X_{\gri,i_{d+1}}}_{\gri\in\Z^d,i_{d+1}\in\Z}$ and
$\pr{\Fca_{\gri,i_{d+1}}}_{\gri\in\Z^d,i_{d+1}\in\Z}$ be such a random field
and a filtration. Let $\grN\in\N^d$ be fixed and such that $\grN\smd\gr{1}$.
\begin{equation}
 Y_{n_{d+1}}:=\max_{\gr{1}\imd \grn\imd \gr{N}} \norm{\sum_{\gr{1}\imd \gri\imd
 \grn }\sum_{i_{d+1}=1}^{n_{d+1}}X_{\gri,i_{d+1} }  }_{\B}
\end{equation}
and $\Gca_{n_{d+1}}:=\Fca_{\gr{N},n_{d+1}}$. Then $Y_{n_{d+1}}$ is
$\Gca_{n_{d+1}}$-measurable and
\begin{equation}
 \E{Y_{n_{d+1}}\mid\Gca_{n_{d+1}-1}}
 \geq \max_{\gr{1}\imd \grn\imd \gr{N}} \norm{\sum_{\gr{1}\imd \gri\imd
 \grn }\E{\sum_{i_{d+1}=1}^{n_{d+1}}X_{\gri,i_{d+1} }  \mid\Gca_{n_{d+1}-1}
}}_{\B}
\end{equation}
and the orthomartingale difference property gives that
\begin{equation}
 \E{\sum_{\gr{1}\imd \gri\imd
 \grn } X_{\gri,n_{d+1} }  \mid\Gca_{n_{d+1}-1}}=0
\end{equation}
hence
\begin{equation}
  \E{Y_{n_{d+1}}\mid\Gca_{n_{d+1}-1}}\geq Y_{n_{d+1}};
\end{equation}
in other words, $\pr{Y_{n_{d+1}},\Fca_{n_{d+1}}}$ is a non-negative
sub-martingale. Therefore, by Lemma~\ref{lem:Doob}, we derive that
\begin{multline}\label{eq:deviation_orthomartingale_dim_d+1_step1}
 \tail{\max_{\gr{1}\imd \grn\imd \gr{N}}\max_{1\leq
n_{d+1}\leq N_{d+1}}\norm{\sum_{\gr{1}\imd \gri\imd
 \grn }\sum_{i_{d+1}=1}^{n_{d+1}}X_{\gri,i_{d+1} }  }_{\B}}\pr{t}
 \\
  \leq 4 T_{1,\infty,0}
  \tail{\max_{\gr{1}\imd \grn\imd \gr{N}} \norm{\sum_{\gr{1}\imd \gri\imd
 \grn }\sum_{i_{d+1}=1}^{N_{d+1}}X_{\gri,i_{d+1} }  }_{\B}}\pr{t/4}.
\end{multline}
Then we use the induction assumption $A\pr{d}$ in the setting
$\widetilde{X_{\gri}}=\sum_{i_{d+1}=1}^{N_{d+1}}X_{\gri,i_{d+1} }$,
$\widetilde{\Fca_{\gri}}=\Fca_{\gri,N_{d+1}}$  This gives
 \begin{multline}\label{eq:deviation_orthomartingale_dim_d+1_step2}
 \tail{\max_{\gr{1}\imd \grn\imd \gr{N}}\max_{1\leq
n_{d+1}\leq N_{d+1}}\norm{\sum_{\gr{1}\imd \gri\imd
 \grn }\sum_{i_{d+1}=1}^{n_{d+1}}X_{\gri,i_{d+1} }  }_{\B}}\pr{t
 }\\
  \leq 4f_{p,q,d}\pr{C_{p,\B}}  T_{1,\infty,0}\circ  T_{p,q,d-1}
  \pr{\tail{ \pr{\sum_{\gr{1}\imd\gri\imd\gr{N}}\norm{
\sum_{i_{d+1}=1}^{N_{d+1}}X_{\gri,i_{d+1} } }_{\B}^p}^{1/p}}}\pr{\frac{t}{4}}
.
\end{multline}

The control of the tail of $\pr{\sum_{\gr{1}\imd\gri\imd\gr{N}}\norm{
\sum_{i_{d+1}=1}^{N_{d+1}}X_{\gri,i_{d+1} } }_{\B}^p}^{1/p}$ will be done by an
other use of the martingale property by summing on the $\pr{d+1}$-th
coordinate. More precisely, define the separable Banach space
$\pr{\widetilde{\B},\norm{\cdot}_{\widetilde{\B}}}$ by
\begin{equation}
 \widetilde{\B}=\ens{\pr{x_{\gri}}_{\gr{1}\imd\gri\imd\gr{N}} , x_{\gri}\in \B
\mbox{ for each }\gr{1}\imd\gri\imd\gr{N} }
\end{equation}
\begin{equation}
 \norm{\pr{x_{\gri}}_{\gr{1}\imd\gri\imd\gr{N}} }_{\widetilde{\B}}
 =\pr{\sum_{ \gr{1}\imd\gri\imd\gr{N}   }\norm{x_{\gri}}_{\B}^p    }^{1/p}.
\end{equation}
Observe that if $\pr{D_k}_{k=1}^n$ is a martingale differences sequence taking
its values in $\widetilde{\B}$, then denoting by
$D_{k,\gri},\gr{1}\imd\gri\imd\gr{N} $ the coordinates of $D_k$, we have
\begin{equation}
 \E{\norm{\sum_{k=1}^n D_k}_{\widetilde{\B}}^p}
 =\E{\sum_{\gr{1}\imd\gri\imd\gr{N}}\norm{
\sum_{k=1}^nD_{k,\gri}}_{\B}^p}
\leq C_{p,\B}\E{\sum_{\gr{1}\imd\gri\imd\gr{N}}\sum_{k=1}^n\norm{
D_{k,\gri}}_{\B}^p}
\end{equation}
hence
\begin{equation}
  \E{\norm{\sum_{k=1}^n D_k}_{\widetilde{\B}}^p}\leq
   C_{p,\B}\E{ \sum_{k=1}^n\norm{
D_{k}}_{\widetilde{\B}^p}}.
\end{equation}
This shows, by the definition \eqref{eq:definition_constant_Banach_space} of
$C_{p,\B}$, that $C_{p,\widetilde{\B}}\leq C_{p,\B}$. Moveover, considering
$\widetilde{\B}$-valued martingale
differences sequences which vanish on all the coordinates which are not
$\gr{1}$ shows that $C_{p,\widetilde{\B}}=C_{p,\B}$.

We thus apply the induction assumption to the case $d=1$,  the Banach
$\pr{\widetilde{\B},\norm{\cdot}_{\widetilde{\B}}}$ and the
$\widetilde{\B}$-valued martingale differences sequence
$\pr{D_k}_{k=1}^{N_{d+1}}$  (with respect to
the filtration $\pr{\Fca_{\gr{N},k  }}_{k=0}^{N_{d+1}}$) given by
\begin{equation}
 D_k=\pr{D_{k,\gri}}_{\gr{1}\imd \gri\imd\gr{N}}, D_{k,\gri}
 =X_{\gri,k}.
\end{equation}
Since
\begin{equation}
 \sum_{k=1}^{N_{d+1}}\norm{D_k}_{\widetilde{\B}}^p=
  \sum_{k=1}^{N_{d+1}}\sum_{\gr{1}\imd \gri\imd\gr{N}}
  \norm{X_{\gri,k}}_{\B}^p
\end{equation}
and
\begin{equation}
 \norm{\sum_{k=1}^{N_{d+1}}D_k}_{\widetilde{\B}}^p=
 \sum_{\gr{1}\imd \gri\imd\gr{N}} \norm{\sum_{k=1}^{N_{d+1}}
  X_{\gri,k}}_{\B}^p,
\end{equation}
we get that

\begin{multline}\label{eq:deviation_orthomartingale_dim_d+1_step3}
  \tail{ \pr{\sum_{\gr{1}\imd\gri\imd\gr{N}}\norm{
\sum_{i_{d+1}=1}^{N_{d+1}}X_{\gri,i_{d+1} } }^p}^{1/p}}\pr{t/4}
=\tail{\norm{\sum_{k=1}^{N_{d+1}}D_k}_{\widetilde{\B}}}\pr{t/4}\\
\leq f_{p,q,1}\pr{C_{p,\B}}
T_{p,q,0}\pr{\tail{
 \pr{ \sum_{k=1}^{N_{d+1}}\norm{D_k}_{\widetilde{\B}}^p}^{1/p}   }}\pr{t/4}\\
=f_{p,q,1}\pr{C_{p,\B}}
T_{p,q,0}\pr{\tail{ \pr{\sum_{k=1}^{N_{d+1}}\sum_{\gr{1}\imd \gri\imd\gr{N}}
  \norm{X_{\gri,k}}_{\B}^p}^{1/p}  }  }\pr{t/4}.
\end{multline}
The combination \eqref{eq:deviation_orthomartingale_dim_d+1_step2} with
\eqref{eq:deviation_orthomartingale_dim_d+1_step3} gives the bound
 \begin{multline}\label{eq:deviation_orthomartingale_dim_d+1_step4}
 \tail{ \max_{\gr{1}\imd \grn\imd \gr{N}}\max_{1\leq
n_{d+1}\leq N_{d+1}}\norm{\sum_{\gr{1}\imd \gri\imd
 \grn }\sum_{i_{d+1}=1}^{n_{d+1}}X_{\gri,i_{d+1} }  }_{\B}}\pr{t
 }\\
  \leq  4f_{p,q,d}\pr{C_{p,\B}} f_{p,q,1}\pr{C_{p,\B}} T_{p,q,0}\circ
T_{1,\infty,0}\circ  T_{p,q,d-1}
 \tail{\pr{\sum_{\gr{1}\imd \gri\imd\gr{N}}\sum_{i_{d+1}=1}^{N_{d+1}}
  \norm{X_{\gri,i_{d+1}}}_{\B}^p}^{1/p}}\pr{t/4}.
\end{multline}
 Using twice Lemma~\ref{lem:operateur_pq} ends the proof of
Theorem~\ref{thm:deviation_orthomartingales}.

\subsection{Proof of Corollary~\ref{cor:dev_ortho_stoch_dom}}

We have to bound the tail of
$\sum_{\gr{1}\imd\gri\imd\gr{N}}\norm{X_{\gri}}_{\B}^p$ in terms of the tail
of $V$. To do so,  we apply successively
Theorem~\ref{thm:deviation_orthomartingales},
\eqref{eq:T_pq_tail_powers}, \eqref{eq:conv_ordering_tails}
and Lemma~\ref{lem:operateur_pq} in order to derive
\begin{align}
 \tail{\max_{\gr{1}\imd \grn\imd \gr{N}}\norm{\sum_{\gr{1}\imd \gri\imd
 \grn }X_{\gri }  }_{\B} }\pr{t } 
  &\leq K\pr{p,q,d,\B}T_{p,q,d-1}\pr{\tail{\pr{\sum_{\gr{1}\imd
  \gri\imd\gr{N}} \norm{X_{\gri}}_{\B}^p}^{1/p}  }  }\pr{t}\\
  &=K\pr{p,q,d,\B}p^{-1}T_{1,q/p,d-1}\pr{\tail{ \sum_{\gr{1}\imd
  \gri\imd\gr{N}} \norm{X_{\gri}}_{\B}^p}   }  \pr{t^p}\\
  &\leq K'\pr{p,q,d,\B}T_{1,q/p,d-1}\circ T_{1,\infty,0}\pr{\tail{
 V^p}   }  \pr{t^p}\\
 &\leq
 K''\pr{p,q,d,\B}T_{1,q/p,d-1+1} \pr{\tail{
 V^p}   }  \pr{t^p}
\end{align}
 and we apply another time \eqref{eq:T_pq_tail_powers}.
 This concludes the proof of
Corollary~\ref{cor:dev_ortho_stoch_dom}.

\subsection{Proof of Theorem~\ref{thm:LLN_regression}}

Let $\B=\mathbb L^p\pr{[0,1]^d,\lambda_d}$ endowed with
the norm $\norm{f}_{\B}=\pr{\int_{[0,1]^d}\abs{f\pr{x}}^p
dx}^{1/p}$. Let $n\geq 1$ be fixed.
Define for $ \gr{i}\in\Lambda_n$ the $\B$-valued random variable
$\widetilde{X_{\gri}}$ by
\begin{equation}
 \widetilde{X_{\gri}}:=\gr{x}\in [0,1]^d\mapsto
 \frac{ X_{\gr{i}}
 K\pr{\frac{\gr{x}-\gr{i}/n}{h_n}}}
 {\sum_{\gr{j}\in\Lambda_n} K\pr{\frac{\gr{x}-\gr{j}/n}{h_n}}}
 ,\quad \gr{x}\in [0,1]^d.
\end{equation}
Then $g_n\pr{\cdot}-\E{g_n\pr{\cdot}}=\sum_{
\gri\in\Lambda_n}\widetilde{X_{\gri}}$. We will now show that 
for $p'=\min\ens{p,2}$, 
\begin{equation}
 \sum_{\gri\in\Lambda_n}
 \norm{ \widetilde{X_{\gri}}}_{\B}^{p'}\conv 
 \begin{cases}
\kappa\pr{K,p}\abs{X_{\gr{1}}}^{p}\pr{nh_n}^{d\pr{1-p}}& \mbox{ if }1<p\leq 2.\\
\kappa\pr{K,p}X_{\gr{1}}^2\pr{nh_n}^{d\pr{1-p}} n^{\frac{p-2}p}&
\mbox{ if }p>2,
 \end{cases}
 \end{equation}
where $\kappa\pr{K,p}$ depends only on $K$ and $p$.
Define 
\begin{equation}
 a_{\gri,p}:= \pr{\int_{[0,1]^d} \frac{
K\pr{\frac{\gr{x}-\gr{i}/n}{h_n}}^p}  { \pr{\sum_{\gr{j}\in\Lambda_n}
K\pr{\frac{\gr{x}-\gr{j}/n}{h_n}}  }^{p}   }  dx  }^{p'/p}
\end{equation}
 and observe that
\begin{equation*}
  \sum_{\gri\in\Lambda_n}
 \norm{ \widetilde{X_{\gri}}}_{\B}^{p'}=
  \sum_{\gri\in\Lambda_n}\abs{X_{\gri}}^{p'}\alpha_{\gri,p}.
\end{equation*}

Let $\varphi\colon\R\to\R$ be a convex increasing function. Then
denoting $A_p=\sum_{\gri\in\Lambda_n} \alpha_{\gri,p}$,
convexity and identical distribution of the $X_{\gri}$ imply that
\begin{align}
 \E{\varphi\pr{
  \sum_{\gri\in\Lambda_n}
 \norm{ \widetilde{X_{\gri}}}_{\B}^{p'}}}
& = \E{\varphi\pr{
  \sum_{\gri\in\Lambda_n}\frac{\alpha_{\gri,p}}{A_p} A_p\abs{X_{\gri}}^{p'}
}} \\
&\leq  \sum_{\gri\in\Lambda_n}\frac{\alpha_{\gri,p}}{A_p}\E{\varphi\pr{
   A_p\abs{X_{\gri}}^{p'}
}}  \\
&=\sum_{\gri\in\Lambda_n}\frac{\alpha_{\gri,p}}{A_p}\E{\varphi\pr{
   A_p\abs{X_{\gr{1}}}^{p'}
}}
\end{align}
hence
\begin{equation}
 \sum_{\gri\in\Lambda_n}
 \norm{ \widetilde{X_{\gri}}}_{\B}^{p'}\conv
A_p\abs{X_{\gr{1}}}^{p'}   .
\end{equation}
It remains to bound the term
$A_p=\sum_{\gri\in\Lambda_n}\alpha_{\gri,p}$. First assume that $p\leq 2$. Then
$p'=p$ and
\begin{equation}
 \sum_{\gri\in\Lambda_n}\alpha_{\gri,p}
 =  \int_{[0,1]^d} \frac{\sum_{\gri\in\Lambda_n}
K\pr{\frac{\gr{x}-\gr{i}/n}{h_n}}^p}  { \pr{\sum_{\gr{j}\in\Lambda_n}
K\pr{\frac{\gr{x}-\gr{j}/n}{h_n}}  }^{p}   }  dx.
\end{equation}
Since $K$ is supported on $[-1,1]^d$, by assumptions~\ref{itm:assumption1} and
\ref{itm:assumption_bounds}, we derive that
\begin{equation}\label{eq:bound_kernel}
 c^s \ind{n\gr{x}-nh_n\gr{1} \imd \gri \imd
n\gr{x}+nh_n\gr{1}} \leq K\pr{\frac{\gr{x}-\gr{i}/n}{h_n}}^s\leq
C^s\ind{n\gr{x}-nh_n\gr{1} \imd \gri \imd n\gr{x}+nh_n\gr{1}},s\in\ens{1,p}
\end{equation}
 hence there exists a constant $\kappa$ such that
 \begin{equation}
  \sum_{\gri\in\Lambda_n}\alpha_{\gri,p}\leq \kappa
  \pr{nh_n}^{d\pr{1-p}}.
 \end{equation}
Now assume that $p>2$. An application of Hölder's inequality
to the conjugate exponents $p/2$ and $p/\pr{p-2}$ gives
\begin{equation}
 \sum_{\gri\in\Lambda_n}\alpha_{\gri,p}\leq
  \sum_{\gri\in\Lambda_n}\alpha_{\gri,p}^{p/2}
\operatorname{Card}\pr{\Lambda_n}^{ \pr{p-2}/p}.
\end{equation}
Using again \eqref{eq:bound_kernel} gives the bound
 \begin{equation}
  \sum_{\gri\in\Lambda_n}\alpha_{\gri,p}^{p/2} \leq \kappa \pr{nh_n}^{d\pr{1-p}}
 \end{equation}
hence
\begin{equation}
 \sum_{\gri\in\Lambda_n}\alpha_{\gri,p}\leq \kappa
 \pr{nh_n}^{d\pr{1-p}}n^{d\frac{p-2}p}
\end{equation}
Then an application of Corollary~\ref{cor:dev_ortho_stoch_dom} with $p$ replaced by $p'=\min\ens{p,2}$ allows to conclude.

\subsection{Proof of the results of Subsection~\ref{subsec:LGN}}
\begin{proof}[Proof of Theorem~\ref{thm:loi_des_grands_nombres_orthomartingale}]
  Let us prove \eqref{eq:control_of_sum_PAn} for $x=1$; the general case follow
by
  applying the previous one to $X_{\gr{i}}/x$.
  We define for all $\gr{n}\smd \gr{1}$ the event
 \begin{equation}
 A_{\gr{n}}:=\ens{\abs{\gr{2^{n}}}^{-1/p}
  \max_{\gr{1}\imd\gr{i}\imd \gr{2^n}}\norm{S_{\gr{i}}}_{\B}
  >2}.
\end{equation}
Let us fix $\gr{n}\smd \gr{1}$ and define for $\gr{1}\imd\gr{i}\imd \gr{2^n}$
the
random variables
\begin{equation}
X'_{\gr{i}}:=\sum_{\delta\in\ens{0;1}^d}\pr{-1}^{\delta_1+
\dots+\delta_d}  \E{X_{\gri}\mathbf 1\ens{\norm{X_{\gri}}_{\B}
\leq \abs{\gr{2^n}}^{1/p}
 }  \mid \Fca_{\gri-\delta } } \mbox{ and }
\end{equation}
\begin{equation}
X''_{\gr{i}}:=\sum_{\delta\in\ens{0;1}^d}\pr{-1}^{\delta_1+
\dots+\delta_d}  \E{X_{\gr{i}}\mathbf 1\ens{\norm{X_{\gr{i}}}_{\B}
> \abs{\gr{2^n}}^{1/p}
 }  \mid \Fca_{\gr{i}-\delta } }.
\end{equation}
We denote by $S'_{\gr{n}}$ and $S''_{\gr{n}}$ the respective partial sums.
Since $\pr{X_{\gri}}_{\gr{i}\in\Z^d}$ is an
orthomartingale difference random field
with respect to the filtration $\pr{\Fca_{\gr{i}}}_{\gr{i}\in\Z^d}$, the
equality $X_{\gr{i}}=X'_{\gr{i}}+X''_{\gr{i}}$ holds
hence $\PP\pr{A_{\gr{n}}}\leq \PP\pr{A'_{\gr{n}}}+\PP\pr{A''_{\gr{n}}}$, where
\begin{equation}
A'_{\gr{n}}:=\ens{\abs{\gr{2^{n}}}^{-1/p}
  \max_{\gr{1}\imd\gr{i}\imd \gr{2^n}}\norm{S'_{\gr{i}}}_{\B}
  >1}\mbox{ and }A''_{\gr{n}}:=\ens{\abs{\gr{2^{n}}}^{-1/p}
  \max_{\gr{1}\imd\gr{i}\imd \gr{2^n}}\norm{S''_{\gr{i}}}_{\B}
  >1}.
\end{equation}
By Chebyshev's inequality,
\begin{equation}
 \PP\pr{A'_{\gr{n}}}\leq \abs{\gr{2^{n}}}^{-r/p}
 \E{ \max_{\gr{1}\imd\gr{i}\imd \gr{2^n}}\norm{S'_{\gr{i}}}_{\B}^r},
\end{equation}
and since $\pr{X'_{\gri} }_{\gr{i}\in\Z^d}$ is an orthomartingale
difference random field
with respect to the filtration $\pr{\Fca_{\gr{i}}}_{\gr{i}\in\Z^d}$, Doob's
inequality
and  Proposition\ref{prop:moments_ordre_r_orthomartingale_Banach} gives
\begin{equation}
 \PP\pr{A'_{\gr{n}}}\leq C_{p,d,\B}\abs{\gr{2^{n}}}^{-r/p}
 \sum_{\gr{1}\imd\gr{i}\imd \gr{2^n}}\E{\norm{X'_{\gr{i}}}_{\B}^r}.
\end{equation}
Since
\begin{equation}
\E{\norm{X'_{\gr{i}}}_{\B}^r}=\norm{\norm{\norm{X'_{\gr{i}}}_{\B}}  }_{r}^r
 \leq \kappa\pr{d,r}
 \norm{X_{\gr{i}}\mathbf 1\ens{\norm{X_{\gr{i}}}_{\B}\leq
\abs{\gr{2^n}}^{1/p}}}_r^r,
 \end{equation}
we derive that
\begin{equation}
 \PP\pr{A'_{\gr{n}}}\leq  C_{p,d,\B}\abs{\gr{2^{n}}}^{-r/p}
 \sum_{\gr{1}\imd\gr{i}\imd \gr{2^n}}\E{\norm{X_{\gr{i}}}_{\B}^r\mathbf
1\ens{\norm{X_{\gr{i}}}_{\B}\leq\abs{\gr{2^n}}^{1/p}}}.
\end{equation}
Moreover, since the random variable $X_{\gri}$ has the same distribution as
$X_{\gr{1}}$, we derive that
\begin{equation}
 \E{\norm{X_{\gr{i}}}_{\B}^r\mathbf
1\ens{\norm{X_{\gr{i}}}_{\B}\leq\abs{\gr{2^n}}^{1/p}}}
=\E{\norm{X_{\gr{1}}}_{\B}^r\mathbf
1\ens{\norm{X_{\gr{1}}}_{\B}\leq\abs{\gr{2^n}}^{1/p}}}
\end{equation}

which leads to the bound
\begin{equation}\label{eq:bound_PAn_prim}
\PP\pr{A'_{\gr{n}}}\leq  C_{p,d,\B}\abs{\gr{2^{n}}}^{1-r/p}
\E{\norm{X_{\gr{1}}}_{\B}^r\mathbf
1\ens{\norm{X_{\gr{1}}}_{\B}\leq\abs{\gr{2^n}}^{1/p}}}.
\end{equation}
In order to bound $\PP\pr{A''_{\gr{n}}}$, we use Markov's inequality and
$\max_{\gr{1}\imd\gr{i}\imd \gr{2^n}}\norm{S''_{\gr{i}}}_{\B}\leq
\sum_{\gr{1}\imd\gr{i}\imd \gr{2^n}}\norm{X''_{\gr{i}}}_{\B}$ to get
\begin{equation}
\PP\pr{A''_{\gr{n}}}\leq \abs{\gr{2^{n}}}^{-1/p}
\sum_{\gr{1}\imd\gr{i}\imd \gr{2^n}}\E{\norm{X''_{\gr{i}}}_{\B} }
\leq \abs{\gr{2^{n}}}^{-1/p}
\sum_{\gr{1}\imd\gr{i}\imd \gr{2^n}}\E{\norm{X_{\gr{i}}}_{\B}
\mathbf 1\ens{\norm{X_{\gr{i}}}_{\B}> \abs{\gr{2^{n}}}^{1/p}  }
}.
\end{equation}
An other use of the fact that $ X_{\gri}$ has the same distribution as
$X_{\gr{1}}$ leads to
\begin{equation}\label{eq:bound_PAn_sec}
\PP\pr{A''_{\gr{n}}}\leq 2^d \abs{\gr{2^{n}}}^{1-1/p}
\E{\norm{X_{\gr{1}}}_{\B}
\mathbf 1\ens{\norm{X_{\gr{1}}}_{\B}> \abs{\gr{2^{n}}}^{1/p}  }
}.
\end{equation}
Combining \eqref{eq:bound_PAn_prim} with \eqref{eq:bound_PAn_sec}, we obtain
\begin{multline}
\sum_{\gr{n}\in\N^d}\PP\pr{ \abs{\gr{2^{n}}}^{-1/p}
  \max_{\gr{1}\imd\gr{i}\imd \gr{2^n}}\norm{S_{\gr{i}}}_{\B}
  >2}\leq C_{p,d,\B}\sum_{\gr{n}\in\N^d} \abs{\gr{2^{n}}}^{1-r/p}
\E{\norm{X_{\gr{1}}}_{\B}^r\mathbf
1\ens{\norm{X_{\gr{1}}}_{\B}\leq\abs{\gr{2^n}}^{1/p}}}\\
+ C_{p,d,\B}\sum_{\gr{n}\in\N^d} \abs{\gr{2^{n}}} \PP\pr{
\norm{X_{\gr{1}}}_{\B}  >\abs{\gr{2^n}}^{1/p}
 }
+ C_{p,d,\B} \sum_{\gr{n}\in\N^d}\abs{\gr{2^{n}}}^{1-1/p}
\E{\norm{X_{\gr{1}}}_{\B}\mathbf 1\ens{\norm{X_{\gr{1}}}_{\B} >
\abs{\gr{2^{n}}}^{1/p}  }}.
\end{multline}
The number of elements of $\N^d$ whose sum is $k$ does not exceed
$\pr{k+1}^{d-1}$ hence
\begin{multline}
\sum_{\gr{n}\in\N^d}\PP\pr{ \abs{\gr{2^{n}}}^{-1/p}
  \max_{\gr{1}\imd\gr{i}\imd \gr{2^n}}\norm{S_{\gr{i}}}_{\B}
  >2}\leq  C_{p,d,\B} \sum_{k=0}^{+\infty} 2^{k\pr{1-r/p}}\pr{k+1}^{d-1}
\E{\norm{X_{\gr{1}}}_{\B}^r\mathbf 1\ens{X\leq 2^{k/p}}}\\
+  C_{p,d,\B} \sum_{k=0}^{+\infty} 2^{k} \pr{k+1}^{d-1}\PP\pr{
\norm{X_{\gr{1}}}_{\B}  >2^{k/p}   }
+  C_{p,d,\B}  \sum_{k=0}^{+\infty}2^{k\pr{1-1/p}}\pr{k+1}^{d-1}
\E{\norm{X_{\gr{1}}}_{\B}\mathbf 1\ens{\norm{X_{\gr{1}}}_{\B}> 2^{k/p}  }}.
\end{multline}
Now, \eqref{eq:control_of_sum_PAn} follows from 
\eqref{eq:sum_indicators_2skkd_leq} and \eqref{eq:sum_indicators_2skkd}.

In order to prove \eqref{eq:control_weak_Lp_norm_maximum}, observe that
\eqref{eq:control_of_sum_PAn} entails that for any positive $t$,
\begin{equation}
\PP\pr{\sup_{\gr{n}\smd\gr{1}}\frac{\norm{S_{\gr{n}}}_{\B}
}{\abs{\gr{n}}^{1/p}}
>t}\leq C_{p,d,\B} \E{\varphi_{p,d-1}\pr{\frac{\norm{X_{\gr{1}}}_{\B}}t}},
\end{equation}
where $C_{p,d}$ depends only on $d$ and $p$ and is bigger than $1$.
Let $R$ be a positive number. If $tleq R$, then
$t^p\PP\pr{\sup_{\gr{n}\smd\gr{1}}\frac{\norm{S_{\gr{n}}}_{\B}}{\abs{\gr{n}}^{
1/p}}
>t}\leq R^p$ and if $t>R$, then
\begin{multline*}
t^p\PP\pr{\sup_{\gr{n}\smd\gr{1}}\frac{\norm{S_{\gr{n}}}_{\B}}{\abs{\gr{n}}^{
1/p}}
>t}
\leq
C_{p,d}\E{\norm{X_{\gr{1}}}_{\B}^p\pr{1+L\pr{\frac{\norm{X_{\gr{1}}}_{\B}}t}
}^{d-1}   }\\ \leq
C_{p,d}\E{\norm{X_{\gr{1}}}_{\B}^p\pr{1+L\pr{\frac{\norm{X_{\gr{1}}}_{\B}}R}
}^{d-1}   }
\end{multline*}
hence for all $t$ and all positive $R$,
\begin{multline}
t^p\PP\pr{\sup_{\gr{n}\smd\gr{1}}\frac{\norm{S_{\gr{n}}}_{\B}}{\abs{\gr{n}}^{
1/p}}
>t} \leq R^p+
C_{p,d}\E{\norm{X_{\gr{1}}}_{\B}^p\pr{ L\pr{\frac{\norm{X_{\gr{1}}}_{\B}}R}
}^{d-1}   }\\
\leq C_{p,d}R^p
\E{\varphi_{p,d-1}\pr{\frac{\norm{X_{\gr{1}}}_{\B}}R}}.
\end{multline}
In particular, for all $R>\norm{X}_{p,d-1}$,
\begin{equation}
\sup_{t>0}t^p\PP\pr{\sup_{\gr{n}\smd\gr{1}}\frac{\norm{S_{\gr{n}}}_{\B}}{\abs{
\gr{n}} ^ { 1/p } }
>t}
\leq C_{p,d}R^p,
\end{equation}
which gives \eqref{eq:control_weak_Lp_norm_maximum}.

In order to prove \eqref{eq:convergence_presque_sure}, we define for $1\leq
j\leq d$
the random variable
\begin{equation}
M_{j,p, N}:=\sup\ens{\frac{\norm{S_{\gr{n}}}_{\B}}{\abs{\gr{n}}^{1/p}}
,\gr{n}\smd \gr{1}, n_j\geq 2^N}.
\end{equation}
Then the combination of \eqref{eq:control_of_sum_PAn} with the Borel-Cantelli
lemma
gives that $M_{j,p, N}\to 0$ almost surely. This ends the proof of
Theorem~\ref{thm:loi_des_grands_nombres_orthomartingale}.
  \end{proof}
  
\begin{proof}[Proof of 
Theorem~\ref{thm:large_deviation_orthomartingale_stationary_p_leq_2}]
 
In what follows, $C\pr{r,d,\B}$ will denote a constant 
that depends only on $r$, $d$ and $\B$ and that may 
change from line to line. Observe that partitionning 
$\pr{\N\setminus\ens{0}}^d$ 
into rectangles of the form $\ens{\grn  \in\N^d, \mbox{ for each 
}\ell\in\ens{1,\dots,d},
2^{N_\ell}\leq n_\ell\leq 2^{N_{\ell}+1}-1}$, it suffices 
to prove that 
\begin{equation}\label{eq:Baum_Katz_champs_dyad}
\sum_{\gr{N}\in\N^d}\abs{\gr{2^N}}^{r\alpha-1}\PP\pr{\max_{\gr{1}\imd
\gr{i}\imd \gr{2^N}}
\norm{S_{\gr{i}}}_{\B}>\varepsilon \abs{\gr{2^N}}^{\alpha}}\leq C\pr{r,d,\B}
\E{ \varphi_{r,2d}\pr{\frac{\norm{X_{\gr{1}}}_{\B}}{\varepsilon} }   }.
\end{equation}
Moreover, replacing $\norm{X_{\gr{1}}}_{\B}$ by
$\norm{X_{\gr{1}}}_{\B}/\eps$ if necessary, we can assume that $\eps=1$.
Noticing that $\sum_{\gr{1}\imd\gri \imd\gr{2^N}} 
\norm{X_{\gri}}_{\B}^r\conv \abs{\gr{2^N}}\norm{X_{\gr{1}}}_{\B}$, an 
application of Corollary~\ref{cor:dev_ortho_stoch_dom} with $p=r$, $q=r+1$ and
$V^r=\abs{\gr{2^N}}\norm{X_{\gr{1}}}_{\B}^r$ gives
\begin{multline}\label{eq:demo_pour_r_LGN}
\PP\pr{\max_{\gr{1}\imd
\gr{i}\imd \gr{2^N}}
\norm{S_{\gr{i}}}_{\B}>  \abs{\gr{2^n}}^{\alpha}} \leq
C\pr{r,d,\B}
 \int_0^1 u^{r}  \PP\pr{\norm{X_{\gr1}}_{\B}
>u\abs{\gr{2^N}}^{\alpha-1/r}}
\mathrm{d}u\\
+C\pr{r,d,\B}
 \int_1^\infty  u^{r-1}\pr{1+ \log u}^{d}\PP\pr{\norm{X_{\gr1}}_{\B}
>u\abs{\gr{2^N}}^{\alpha-1/r}}
\mathrm{d}u.
\end{multline}
Using the fact that for a fixed $k$, 
\begin{equation}\label{eq:card_sum_=k}
 \operatorname{Card}\pr{\ens{\gr{N}=\pr{N_\ell}_{\ell=1}^d\in\N^d,
\sum_{\ell=1}^d N_\ell=k}} \leq c_dk^{d-1},
\end{equation}
 we derive that 
\begin{multline}
 \sum_{\gr{N}\in\N^d}\abs{\gr{2^N}}^{r\alpha-1}\PP\pr{\max_{\gr{1}\imd
\gr{i}\imd \gr{2^N}}
\norm{S_{\gr{i}}}_{\B}>\abs{\gr{2^N}}^{\alpha}}
\\ 
\leq 
C\pr{r,d,\B}\sum_{k\geq 1}2^{k\pr{r\alpha-1}}k^{d-1}
 \int_0^1 u \PP\pr{\norm{X_{\gr1}}_{\B}
>u2^{k\pr{\alpha-1/r}} }
\mathrm{d}u\\
+C\pr{r,d,\B}
 \sum_{k\geq 1}2^{k\pr{r\alpha-1}}k^{d-1}\int_1^\infty  u^{r-1}\pr{1+ \log
u}^{d}\PP\pr{\norm{X_{\gr1}}_{\B} 
>u2^{k\pr{\alpha-1/r}} }
\mathrm{d}u.
\end{multline}
Using \eqref{eq:sum_indicators_2skkd}, we get that 
\begin{multline}\label{eq:step_Baum_Katz_r}
 \sum_{\gr{N}\in\N^d}\abs{\gr{2^n}}^{r\alpha-1}\PP\pr{\max_{\gr{1}\imd
\gr{i}\imd \gr{2^N}}
\norm{S_{\gr{i}}}_{\B}>\varepsilon \abs{\gr{2^n}}^{\alpha}}
\\ 
\leq 
C\pr{r,d,\B}
\E{\norm{X_{\gr1}}_{\B}^{r} \int_0^1 u
L\pr{ \frac{ \norm{X_{\gr1}}_{\B}}{u}  }^{d-1}
\ind{\norm{X_{\gr1}}_{\B}>u    }   }
\mathrm{d}u\\
+C\pr{r,d,\B}\E{\norm{X_{\gr1}}_{\B}^{r}\int_1^\infty  u^{ -1}\pr{1+ \log
u}^{d}
L\pr{ \frac{ \norm{X_{\gr1}}_{\B}}{u}  }^{d-1}
\ind{\norm{X_{\gr1}}_{\B}>u    }   }
\mathrm{d}u.
\end{multline}
Using that for a fixed $x$,
\begin{align*}
 \int_0^1u
L\pr{ \frac{ x}{u}  }^{d-1}
\ind{x>u    }
\mathrm{d}u&=\ind{x>1    }\int_0^1u
L\pr{ \frac{ x}{u}  }^{d-1}
\mathrm{d}u+ \ind{x\leq 1    }\int_0^xu
L\pr{ \frac{ x}{u}  }^{d-1}
\mathrm{d}u\\
& =\pr{1+L\pr{x}}^{d-1}
\end{align*}
and using $L\pr{x/u}^{d-1}\leq \pr{L\pr{x}+L\pr{u}}^{d-1
}\leq 2^{d}\pr{L\pr{x}+L\pr{u}}$, we get
\begin{align*}
 \int_1^{\infty}u^{ -1}\pr{1+ \log
u}^{d}
L\pr{ \frac{ x}{u}  }^{d-1}\ind{x>u}\mathrm{d}u&
= \ind{x>1}\int_1^{x}u^{ -1}\pr{1+ \log
u}^{d}
L\pr{ \frac{ x}{u}  }^{d-1}\mathrm{d}u\\
&\leq c_d\ind{x>1}\pr{1+\log x}^{2d},
\end{align*}
which implies that both terms in the right hand side of  \eqref{eq:step_Baum_Katz_r} are smaller than
$K_{r,d,\B}\E{\varphi_{r,2d}\pr{\norm{X_{\gr{1}}}_{\B} }}$.
This ends the proof of Theorem~\ref{thm:large_deviation_orthomartingale_stationary_p_leq_2}.
\end{proof}

  \begin{proof}[Proof of 
Theorem~\ref{thm:large_deviation_orthomartingale_stationary_p_geq_2}]
 
In what follows, $C\pr{r,d,\B}$ will denote a constant that 
depends only on $r$, $d$ and $\B$ and that may change from line to line.

By the same arguments as at the beginning of the proof
of Theorem~\ref{thm:large_deviation_orthomartingale_stationary_p_leq_2},
we are reduced to prove that
 \begin{equation}\label{eq:Baum_Katz_champs_dyad_s>r}
\sum_{\gr{N}\in\N^d}\abs{\gr{2^N}}^{\alpha-1/r}\PP\pr{\max_{\gr{1}\imd
\gr{i}\imd \gr{2^N}}
\norm{S_{\gr{i}}}_{\B}> \abs{\gr{2^N}}^{\alpha}}\leq C\pr{r,d,\B}
\E{ \varphi_{r,d-1}\pr{ \norm{X_{\gr{1}}}_{\B}  }   }.
\end{equation}
To do so, we apply Corollary~\ref{cor:dev_ortho_stoch_dom} 
in the setting $p=r$, $q=s+1$ and $V^p=V^r=\abs{\gr{2^{N}}}
\norm{X_{\gr{1}}}_{\B}^r$. Using again \eqref{eq:card_sum_=k}, we obtain 
\begin{multline}
 \sum_{\gr{N}\in\N^d}\abs{\gr{2^N}}^{\alpha-1/r}\PP\pr{\max_{\gr{1}\imd
\gr{i}\imd \gr{2^N}}
\norm{S_{\gr{i}}}_{\B}>\varepsilon \abs{\gr{2^N}}^{\alpha}}\\
\leq C\pr{r,d,\B}\sum_{k\geq 1}
2^{k\pr{\alpha-1/r}}k^{d-1}\int_0^1
u^s\PP\pr{\norm{X_{\gr1}}_{\B}>u2^{k\pr{\alpha-/r}}  }\D{u}\\
+C\pr{r,d,\B}\sum_{k\geq 1}2^{k\pr{\alpha-1/r}}k^{d-1}\int_1^\infty
u^{r-1}\pr{1+\log u}^d\PP\pr{\norm{X_{\gr1}}_{\B}>u2^{k\pr{\alpha-/r}}  }\D{u}.
\end{multline}
Then using \eqref{eq:sum_indicators_2skkd}, we derive that 
\begin{multline}
 \sum_{\gr{N}\in\N^d}\abs{\gr{2^N}}^{\alpha-1/r}\PP\pr{\max_{\gr{1}\imd
\gr{i}\imd \gr{2^N}}
\norm{S_{\gr{i}}}_{\B}>  \abs{\gr{2^N}}^{\alpha}}\\
\leq C\pr{r,d,\B}\int_0^1\E{\norm{X_{\gr1}}_{\B}^s
L\pr{\frac{\norm{X_{\gr1}}_{\B}}u }^{d-1}\ind{\norm{X_{\gr1}}_{\B}>u} \D{u} }\\
+C\pr{r,d,\B}\int_1^\infty u^{r-s-1}\pr{1+\log u}^d
\E{\norm{X_{\gr1}}_{\B}^s
L\pr{\frac{\norm{X_{\gr1}}_{\B}}u }^{d-1}
\ind{\norm{X_{\gr1}}_{\B}>u} \D{u} }.
\end{multline}
Using that for a fixed $x$,
\begin{align*}
 \int_0^1 
L\pr{ \frac{ x}{u}  }^{d-1}
\ind{x>u    }
\mathrm{d}u&=\ind{x>1    }\int_0^1
L\pr{ \frac{ x}{u}  }^{d-1}
\mathrm{d}u+ \ind{x\leq 1    }\int_0^x
L\pr{ \frac{ x}{u}  }^{d-1}
\mathrm{d}u\\
& =\pr{1+L\pr{x}}^{d-1}
\end{align*}
and using $L\pr{x/u}^{d-1}\leq \pr{L\pr{x}+L\pr{u}}^{d-1
}\leq 2^{d}\pr{L\pr{x}+L\pr{u}}^{d-1}$, we get
\begin{align*}
 \int_1^{\infty}u^{ r-s-1}\pr{1+ \log
u}^{d}
L\pr{ \frac{ x}{u}  }^{d-1}\ind{x>u}\mathrm{d}u&
= \ind{x>1}\int_1^{x}u^{ r-s-1}\pr{1+ \log
u}^{d}
L\pr{ \frac{ x}{u}  }^{d-1}\mathrm{d}u\\
&\leq c_d\ind{x>1}\pr{1+\log x}^{d-1}\int_1^{\infty}u^{ r-s-1}\pr{1+ \log
u}^{2d-1},
\end{align*}
we get \eqref{eq:Baum_Katz_champs_dyad_s>r}, which finishes the proof of 
Theorem~\ref{thm:large_deviation_orthomartingale_stationary_p_geq_2}.
\end{proof}

\subsection{Proof of Theorem~\ref{thm:law_large_numbers_weighted_sums}}

For a fixed $n$, let $\widetilde{X_{\gri}}
=A_{n,\gri}\pr{X_{\gri}}$. By linearity of $A_{n,\gri}$, 
$\pr{\widetilde{X_{\gri}}}_{\gri\in\Z^d}$ is 
an orthomartingale difference random field. Moreover, 
the following inequality takes place:
\begin{equation}\label{eq:convex_bound_weighted_sum}
 \sum_{\gri\in\Z^d}\norm{\widetilde{X_{\gri}}}_{\B}^p
 \conv C_{n,p}\norm{X_{\gr{1}}}_{\B}^p.
\end{equation}
Indeed, let $\varphi\colon\R\to\R$ be a convex non-decreasing
function. Then using the fact that $\varphi$ is non-decreasing 
and the elementary bound 
$\norm{Ax}_{\B}\leq\norm{A}_{\Bca\pr{\B}}\norm{x}_{\B}$, we derive that  
\begin{equation}
 \E{\varphi\pr{\sum_{\gri\in\Z^d}\norm{\widetilde{X_{\gri}}}_{\B}^p}}\leq 
\E{\varphi\pr{\sum_{\gri\in\Z^d}\frac{
\norm{A_{n,\gri}}_{\Bca\pr{\B}}^p}{C_{n,p}}C_{n,p}
\norm{ X_{\gri}}_{\B}^p} }
\end{equation}
hence convexity of $\varphi$ and the fact that random
variables $\norm{X_{\gri}}_{\B}^p, \gri\in\Z^d$, have the 
same distribution gives \eqref{eq:convex_bound_weighted_sum}. 

We are now in position to apply Corollary~\ref{cor:dev_ortho_stoch_dom} to 
$q=s+1$ and $V=C_n^{1/p}\norm{X_{\gr{1}}}_{\B}$, which gives 
\begin{multline}
 \PP\pr{\norm{\sum_{\gri\in\Z^d}A_{n,\gri}\pr{X_{\gri}} }_{\B}>\varepsilon 
C_n^{1/p} R_n}
\leq f_{p,s,d}\pr{C_{p,\B}}
\int_0^1u^{s}\PP\pr{\norm{X_{\gr{1}}}_{\B}>\varepsilon R_nu }du\\
+f_{p,s,d}\pr{C_{p,\B}}
\int_1^\infty 
u^{p-1}\pr{1+\log u}^{d}\PP\pr{\norm{X_{\gr{1}}}_{\B}>\varepsilon R_nu }du.
\end{multline}
From the elementary (in)equalities 
\begin{align}
 \sum_{n\geq 1}\pr{R_n^s-R_{n-1}^s}\PP\pr{Y>R_n}
 &=\sum_{n\geq 1}\sum_{k\geq n}
 \pr{R_n^s-R_{n-1}^s}\PP\pr{R_k<Y\leq R_{k+1}}\\
 &= \sum_{k\geq 1}\sum_{n: k\geq n}
 \pr{R_n^s-R_{n-1}^s}\PP\pr{R_k<Y\leq R_{k+1}}\\
 &\leq \sum_{k\geq 1}R_k^s\PP\pr{R_k<Y\leq R_{k+1}}\\
 &\leq \sum_{k\geq 1}
 \E{Y^s \ind{R_k<Y\leq R_{k+1}}}\leq \E{Y^s },
\end{align}
 we derive that 
 \begin{multline}
  \sum_{n\geq 1}\pr{R_n^s-R_{n-1}^s} 
\PP\pr{\norm{\sum_{\gri\in\Z^d}A_{n,\gri}\pr{X_{\gri}} }_{\B}>\varepsilon 
C_n^{1/p} R_n}\leq 
f_{p,s,d}\pr{C_{p,\B}}
\int_0^1u^{s}\E{\norm{X_{\gr{1}}}_{\B}^s \pr{\varepsilon u}^{-s} } du\\
+f_{p,s,d}\pr{C_{p,\B}}
\int_1^\infty 
u^{p-1}\pr{1+\log u}^{d}\E{\norm{X_{\gr{1}}}_{\B}^s \pr{\varepsilon u}^{-s} }du
 \end{multline}
and since $s>p$, the last integral is convergent. This ends the proof of 
Theorem~\ref{thm:law_large_numbers_weighted_sums}. 

\begin{appendices}
\section{Appendix}
\subsection{Properties of orthomartingales}

 \begin{Proposition}\label{prop:moments_ordre_r_orthomartingale_Banach}
 Let $\pr{\B,\norm{\cdot}_{\B}}$ be an $r$-smooth Banach space.
 There exists a constant $C\pr{\B}$ such that for each
 $d\geq 1$ and each orthomartingale difference
 random field $\pr{X_{\gri}}_{\gri\in\Z^d}$,
 \begin{equation}\label{eq:moments_ordre_r_orthomartingale_Banach}
  \E{\norm{\sum_{\gr{1}\imd \gri\imd\grn} X_{\gri} }_{\B}^r }
  \leq C\pr{\B}^d
\sum_{\gr{1}\imd \gri\imd\grn}
\E{\norm{X_{\gri}}_{\B}^r}.
 \end{equation}
 \end{Proposition}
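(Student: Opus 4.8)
The plan is to argue by induction on the dimension $d$, the constant being $C\pr{\B}:=C_{r,\B}$, the finite constant of \eqref{eq:definition_constant_Banach_space} with $p=r$. The base case $d=1$ is nothing but \eqref{eq:moments_ordre_r_martingale_Banach}, since an orthomartingale difference random field indexed by $\Z$ is precisely a martingale difference sequence. If the right-hand side of \eqref{eq:moments_ordre_r_orthomartingale_Banach} is infinite there is nothing to prove, so we may and do assume $\norm{X_{\gri}}_{\B}\in\mathbb L^r$ for all $\gri$ throughout; moreover, if some coordinate of $\grn$ is smaller than $1$ both sides vanish, so we assume $\grn\smd\gr1$.

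For the induction step, assume the statement holds in dimension $d$ and let $\pr{X_{\gri,i_{d+1}}}_{\pr{\gri,i_{d+1}}\in\Z^{d+1}}$ be an orthomartingale difference random field for a completely commuting filtration $\pr{\Fca_{\gri,i_{d+1}}}$; fix the upper corner $\pr{\grn,n_{d+1}}$ with $\grn\smd\gr1$ and $n_{d+1}\geq 1$. The first step is to sum out the last coordinate: set $S_{\gri}:=\sum_{j=1}^{n_{d+1}}X_{\gri,j}$ and $\widetilde{\Fca_{\gri}}:=\Fca_{\gri,n_{d+1}}$ for $\gri\in\Z^d$. I would then observe that $\pr{\widetilde{\Fca_{\gri}}}_{\gri\in\Z^d}$ is again completely commuting --- freezing the last coordinate at $n_{d+1}$ in the commutation relation \eqref{eq:def_filtration_commutante} is compatible with coordinatewise minima --- and that $\pr{S_{\gri}}_{\gri\in\Z^d}$ is an orthomartingale difference random field with respect to it: for $\ell\in\ens{1,\dots,d}$ and $j\leq n_{d+1}$, using that $X_{\gri,j}$ is $\Fca_{\gri,j}$-measurable together with \eqref{eq:def_filtration_commutante},
\begin{equation*}
 \E{X_{\gri,j}\mid\Fca_{\gri-\gr{e_\ell},n_{d+1}}}=\E{X_{\gri,j}\mid\Fca_{\min\ens{\pr{\gri,j},\pr{\gri-\gr{e_\ell},n_{d+1}}}}}=\E{X_{\gri,j}\mid\Fca_{\gri-\gr{e_\ell},j}}=0,
\end{equation*}
so that $\E{S_{\gri}\mid\widetilde{\Fca_{\gri-\gr{e_\ell}}}}=0$. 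The induction hypothesis then gives
\begin{equation*}
 \E{\norm{\sum_{\gr1\imd\gri\imd\grn}S_{\gri}}_{\B}^r}\leq C\pr{\B}^d\sum_{\gr1\imd\gri\imd\grn}\E{\norm{S_{\gri}}_{\B}^r}.
\end{equation*}

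The second step bounds each summand on the right. For fixed $\gri$, the orthomartingale property in direction $d+1$ says precisely that $\pr{X_{\gri,j}}_{j\geq 1}$ is a martingale difference sequence for $\pr{\Fca_{\gri,j}}_{j\geq 0}$, so \eqref{eq:moments_ordre_r_martingale_Banach} yields $\E{\norm{S_{\gri}}_{\B}^r}\leq C\pr{\B}\sum_{j=1}^{n_{d+1}}\E{\norm{X_{\gri,j}}_{\B}^r}$. Substituting into the previous display and reindexing the sum over the rectangle gives
\begin{equation*}
 \E{\norm{\sum_{\gr{1}\imd\pr{\gri,j}\imd\pr{\grn,n_{d+1}}}X_{\gri,j}}_{\B}^r}\leq C\pr{\B}^{d+1}\sum_{\gr{1}\imd\pr{\gri,j}\imd\pr{\grn,n_{d+1}}}\E{\norm{X_{\gri,j}}_{\B}^r},
\end{equation*}
which is \eqref{eq:moments_ordre_r_orthomartingale_Banach} in dimension $d+1$ and closes the induction. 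There is no serious obstacle here: the only point demanding attention is that "summing out a coordinate" and "freezing a coordinate in the index of the filtration" both preserve the orthomartingale structure, and this is exactly what the complete commutation hypothesis of Definition~\ref{dfn:commutante} provides; the remainder is bookkeeping with rectangles.
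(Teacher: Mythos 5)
Your proof is correct and follows essentially the same route as the paper: induction on $d$, summing out the last coordinate to obtain a $d$-dimensional orthomartingale difference random field, applying the induction hypothesis, and then using \eqref{eq:moments_ordre_r_martingale_Banach} in the direction $d+1$. The only difference is that you spell out, via the complete commutation relation \eqref{eq:def_filtration_commutante}, why the summed-out field is again an orthomartingale difference random field, a verification the paper leaves implicit.
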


\begin{proof}
 We proceed by induction: for $d=1$, this is exactly
\eqref{eq:moments_ordre_r_martingale_Banach}. Assume now that
\eqref{eq:moments_ordre_r_orthomartingale_Banach} is true for any
$d$ dimensional orthomartingale difference random field and let
$\pr{X_{\gri;i_{d+1}}}_{\gri\in\Z^d,i_{d+1}\in\Z}$ be a
$\pr{d+1}$-dimensional
$\B$-valued orthomartingale difference random field. Since
$\pr{\sum_{i_{d+1}=1}^{n_{d+1}} X_{\gri;i_{d+1}} }_{\gri\in\Z^d}$ is an
orthomartingale difference random field, the induction assumption implies
that
\begin{equation}
 \E{\norm{\sum_{\gr{1}\imd \gri\imd\grn}\sum_{i_{d+1}=1}^{n_{d+1}}
X_{\gri,i_{d+1}} }_{\B}^r }
  \leq C\pr{\B}^d\sum_{\gr{1}\imd \gri\imd\grn}
\E{  \norm{\sum_{i_{d+1}=1}^{n_{d+1}}   X_{\gri,i_{d+1} }}_{\B}^r}
\end{equation}
and we conclude by an other application of
\eqref{eq:moments_ordre_r_martingale_Banach}.
\end{proof}

\subsection{Some operators}

\begin{Definition}\label{def:operateur_pq}
 For $-\infty\leq p<q\leq \infty$ and $d\in\N$, denote by $\Hca_{p,q,d}$
 the set of functions
  $g\colon\R_+\to\R_{+}$ such that
$\int_0^\infty\min\ens{u^{p-1},u^{q-1}}\pr{1+\ind{u>1}\log
u}^dg\pr{u}du<\infty$.
 We define the operator $T_{p,q,d}$ defined on
$\Hca_{p,q}$ by
\begin{equation} \label{eq:def_operateur_pq}
T_{p,q,d}\pr{g}\pr{x}:=
 \int_{0}^\infty \min\ens{u^{p-1},u^{q-1}}\pr{1+\ind{u>1}\log
u}^dg\pr{xu}du.
\end{equation}
 \end{Definition}
 Since $p<q$, we have
 \begin{equation}
T_{p,q,d}\pr{g}\pr{x}=
 \int_0^1 u^{q-1}g\pr{xu}du+ \int_1^\infty u^{p-1}\pr{1+\log u}^dg\pr{xu}du.
\end{equation}
 Note that here, $p$ is allowed to be equal to $-\infty$, in which case,
 \begin{equation}\label{eq:dfn_T_moins_inf_q}
  T_{-\infty,q,d}\pr{g}\pr{x}=
 \int_{0}^1 u^{q-1} g\pr{xu}du
 \end{equation}
and similarly, the case $q=\infty$ is also possible to consider, giving
 \begin{equation}
  T_{p,\infty,d}\pr{g}\pr{x}=
 \int_1^\infty u^{p-1}\pr{1+\log u}^dg\pr{xu}du .
 \end{equation}

 Most of the deviation inequalities in this paper take the form
 $f\pr{x} \leq T_{p,q,d}\pr{g}\pr{x}$, where $f$ and $g$ are tail functions of
some random variables. Repeated applications of such inequalities lead us to
consider composition of operators $T_{p,q,d}$ with different parameters.
 \begin{Lemma}\label{lem:operateur_pq}
 Let $p_1,q_1,p_2$ and $q_2$ be such that $-\infty\leq p_1,
p_2<\infty$,
 $q_1\neq q_2$, $-\infty<q_1,q_2\leq  \infty$
and $\max\ens{p_1,p_2}<\min\ens{q_1,q_2}$. Let $d_1,d_2\in\N$,
$p=\max\ens{p_1,p_2}$, $q=\min\ens{q_1,q_2}$ and
$d=d_1+d_2+\ind{p_1=p_2}$. Assume that $p<q$. Then for
each function $g$ in
$\Hca_{p,q,d}$ and each positive $x$,
\begin{equation}\label{eq:iteration_Tpq}
 T_{p_1,q_1,d_1}\circ T_{p_2,q_2,d_2}\pr{g}\pr{x}
 \leq C_{p_1,q_1,p_2,q_2}
T_{p, q ,d}\pr{g}\pr{x},
\end{equation}
where
\begin{equation}
 C_{p_1,q_1,p_2,q_2,d_1,d_2} =I_{q-p,d_1+d_2}2^{d_1+d_2}
 \pr{\frac{\ind{p_1\neq p_2} }{\abs{p_1-p_2 }}
+\ind{p_1=p_2} +\frac{1}{\abs{q_1-q_2}}},
\end{equation}
\begin{equation}\label{eq:def_de_Iqd}
  I_{q,d}=\int_1^{\infty}v^{-1-q}\pr{1+\log v}^d dv
\end{equation}

and when one of the numbers $p_1$ or $p_2$ is $-\infty$ or $q_1$ or $q_2$ is
$\infty$, the corresponding fraction in $C_{p_1,q_1,p_2,q_2,d_1,d_2} $ is
understood as $0$, as well as $\ind{p_1\neq p_2} /\abs{p_1-p_2 }$ if $p_1=p_2$.
\end{Lemma}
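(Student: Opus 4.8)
The plan is to reduce \eqref{eq:iteration_Tpq} to a pointwise estimate for a single multiplicative-convolution kernel. Write $\phi_i\pr{u}=\min\ens{u^{p_i-1},u^{q_i-1}}\pr{1+\ind{u>1}\log u}^{d_i}$ for $i\in\ens{1,2}$, and $\phi\pr{u}=\min\ens{u^{p-1},u^{q-1}}\pr{1+\ind{u>1}\log u}^{d}$, so that $T_{p_i,q_i,d_i}\pr{g}\pr{x}=\int_0^\infty\phi_i\pr{u}g\pr{xu}\,\mathrm{d}u$ and likewise for $T_{p,q,d}$. Since $\phi_1,\phi_2$ and $g$ are non-negative, Tonelli's theorem together with the change of variable $w=uv$ at fixed $u$ gives
\begin{equation*}
 T_{p_1,q_1,d_1}\circ T_{p_2,q_2,d_2}\pr{g}\pr{x}=\int_0^\infty g\pr{xw}K\pr{w}\,\mathrm{d}w,\qquad K\pr{w}:=\int_0^\infty\phi_1\pr{u}\phi_2\pr{w/u}\,\frac{\mathrm{d}u}{u}.
\end{equation*}
Hence it is enough to prove the pointwise inequality $K\pr{w}\leq C_{p_1,q_1,p_2,q_2,d_1,d_2}\,\phi\pr{w}$ for all $w>0$ (which also shows the composition is well defined on $\Hca_{p,q,d}$). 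Two elementary facts will be used throughout: the identity $\int_0^1 v^{\delta-1}\pr{1-\log v}^a\,\mathrm{d}v=I_{\delta,a}$, obtained from \eqref{eq:def_de_Iqd} through $v\mapsto1/v$ and finite precisely when $\delta>0$, together with the monotonicity of $I_{\delta,a}$ (nonincreasing in $\delta$, nondecreasing in $a$); and $1+\log\pr{ab}\leq\pr{1+\log a}\pr{1+\log b}$ for $a,b\geq1$. Note that $q_1-p_2\geq q-p>0$ and $q_2-p_1\geq q-p>0$, which is exactly what makes the integrals below converge.

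To bound $K\pr{w}$ I split the $u$-integral at the points $1$ and $w$ and, on each of the three resulting intervals, replace both $\min$'s and both logarithmic factors by the branch actually in force, namely $\phi_i\pr{u}=u^{q_i-1}$ for $u<1$ and $\phi_i\pr{u}=u^{p_i-1}\pr{1+\log u}^{d_i}$ for $u>1$. For $w\leq1$ the three pieces, after the substitution $u=wv$ on the first and direct integration on the other two, reduce to $w^{q_1-1}I_{q_1-p_2,d_2}$, to $w^{q-1}\abs{q_1-q_2}^{-1}$ (here $q_1\neq q_2$ is used), and to $w^{q_2-1}I_{q_2-p_1,d_1}$ respectively; since $w\leq1$, $q_1,q_2\geq q$, and $\phi\pr{w}=w^{q-1}$ in this range, each is at most a constant times $\phi\pr{w}$, the constants being controlled by $I_{q-p,d_1+d_2}$ and $\abs{q_1-q_2}^{-1}$ via the monotonicity of $I$.

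For $w>1$ the same splitting gives integrals over $\pr{0,1}$, $\pr{1,w}$ and $\pr{w,\infty}$. On $\pr{0,1}$ only $\phi_1$ has lost its logarithm, and $1+\log\pr{w/u}\leq\pr{1+\log w}\pr{1-\log u}$ reduces the piece to $w^{p_2-1}\pr{1+\log w}^{d_2}I_{q_1-p_2,d_2}$; on $\pr{w,\infty}$ only $\phi_2$ has lost its logarithm, and the substitution $u=wv$ with $1+\log u\leq\pr{1+\log w}\pr{1+\log v}$ gives $w^{p_1-1}\pr{1+\log w}^{d_1}I_{q_2-p_1,d_1}$; both are at most a constant times $w^{p-1}\pr{1+\log w}^{d}=\phi\pr{w}$. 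The decisive piece is the middle one, $\int_1^w u^{p_1-1}\pr{1+\log u}^{d_1}\pr{w/u}^{p_2-1}\pr{1+\log\pr{w/u}}^{d_2}\,\frac{\mathrm{d}u}{u}$. If $p_1\neq p_2$, bounding both logarithmic factors by $\pr{1+\log w}^{d_1+d_2}$ and using $\int_1^w u^{-\abs{p_1-p_2}-1}\,\mathrm{d}u\leq\abs{p_1-p_2}^{-1}$ yields $\abs{p_1-p_2}^{-1}w^{p-1}\pr{1+\log w}^{d_1+d_2}$. If $p_1=p_2=p$, the substitution $t=\log u$ turns the piece into $w^{p-1}\int_0^{\log w}\pr{1+t}^{d_1}\pr{1+\log w-t}^{d_2}\,\mathrm{d}t\leq w^{p-1}\pr{1+\log w}^{d_1+d_2}\log w\leq w^{p-1}\pr{1+\log w}^{d_1+d_2+1}$, which is exactly $\phi\pr{w}$ with $d=d_1+d_2+1$. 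Summing the three contributions and collecting the multiplicative constants yields $K\pr{w}\leq C_{p_1,q_1,p_2,q_2,d_1,d_2}\,\phi\pr{w}$ in this range as well, hence \eqref{eq:iteration_Tpq}. When one of $p_1,p_2$ is $-\infty$ or one of $q_1,q_2$ is $\infty$, the corresponding interval of integration (or the corresponding $\min$-branch) is simply empty, so the matching term drops out, consistently with the stated convention that its fraction in $C_{p_1,q_1,p_2,q_2,d_1,d_2}$ equals $0$.

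The main obstacle is the bookkeeping for $w>1$, and especially the middle interval $\pr{1,w}$, where a genuine resonance occurs when $p_1=p_2$: the powers $u^{p_1-1}$ and $\pr{w/u}^{p_2-1}$ combine to the $u$-independent factor $w^{p-1}$, so the surviving logarithmic integral over an interval of length $\log w$ forces the extra power recorded in $d=d_1+d_2+\ind{p_1=p_2}$. All the remaining pieces reduce, through the substitutions above, to the convergent integrals $I_{\delta,a}$ with $\delta=q_j-p_k>0$, and the final constant is assembled from $I_{q-p,d_1+d_2}$, $\abs{p_1-p_2}^{-1}$ and $\abs{q_1-q_2}^{-1}$, which matches the displayed expression up to the explicit numerical factor.
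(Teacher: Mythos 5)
Your proof is correct and follows essentially the same route as the paper's: Tonelli plus the substitution $t=uv$ reduces the composition to a multiplicative-convolution kernel, the kernel integral is split at $1$ and $w$ into the same three regimes the paper encodes with its four indicator pieces $f_1,\dots,f_4$, each piece is bounded by the same integrals $I_{\delta,a}$ with $\delta=q_j-p_k\geq q-p>0$, and the resonance on $\pr{1,w}$ when $p_1=p_2$ produces the extra logarithm exactly as in the paper's bound of $f_1$. The only differences are cosmetic (your submultiplicativity bound $1+\log\pr{ab}\leq\pr{1+\log a}\pr{1+\log b}$ versus the paper's $2^{d_i}$ factors), except for one compressed step worth noting: your one-line estimate $\int_1^w u^{-\abs{p_1-p_2}-1}\,\mathrm{d}u\leq\abs{p_1-p_2}^{-1}$ literally covers only the subcase $p_1<p_2$, while for $p_1>p_2$ one needs $\int_1^w u^{\abs{p_1-p_2}-1}\,\mathrm{d}u\leq w^{\abs{p_1-p_2}}/\abs{p_1-p_2}$ combined with the prefactor $w^{p_2-1}$, which still gives the bound $w^{p-1}\abs{p_1-p_2}^{-1}\pr{1+\log w}^{d_1+d_2}$ that you state, so the conclusion stands.
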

 \begin{proof}
  We will treat the case where all the numbers $p_1,q_1,p_2$ and $q_2$ are
finite. The general case can be deduced by monotone convergence.
By definition of $T_{p,q,d}$,
  \begin{multline}
   T_{p_1,q_1,d_1}\circ T_{p_2,q_2,d_2} g\pr{x}
\\=\int_{0}^\infty\int_{0}^\infty\min\ens{u^{p_1-1},u^{q_1-1}}
\pr{1+\ind{u>1} \log u}^{d_1}
\min\ens{v^{p_2-1},v^{ q_2-1 } }\pr{1+\ind{v>1} \log v}^{d_2}
   g\pr{uvx}dvdu.
  \end{multline}
Then doing the substitution $t=uv$ for a fixed $u$ gives
  \begin{multline}
   T_{p_1,q_1,d_1}\circ T_{p_2,q_2,d_2} g\pr{x}   \\
=\int_{0}^\infty\int_{0}^\infty\min\ens{u^{p_1-1},u^{q_1-1}}\pr{1+\ind{u>1}
\log
u}^{d_1}
\min\ens{\pr{\frac{t}u}^{p_2-1},\pr{\frac{t}u}^{ q_2-1 } }\pr{1+\ind{t>u} \log
\frac tu}^{d_2}\frac 1u
   g\pr{tx}dtdu.
  \end{multline}
  and we are reduced to bound
  \begin{equation}
f\pr{t}= \int_{0}^\infty\min\ens{u^{p_1-1},u^{q_1-1}}\pr{1+\ind{u>1} \log
u}^{d_1}
\min\ens{\pr{\frac{t}u}^{p_2-1},\pr{\frac{t}u}^{ q_2-1 } }\pr{1+\ind{t>u} \log
\frac tu}^{d_2}\frac 1u.
  \end{equation}
To do so, we split the integral according to the cases
$t\leq 1$ or not and $u\leq t$ or not in order to get
\begin{multline}
f\pr{t}=f_1\pr{t}+f_2\pr{t}+f_3\pr{t}+f_4\pr{t}\\
:=\int_{0}^\infty \ind{u>1}\ind{t> u}u^{p_1-1}\pr{\frac{t}u}^{p_2-1}
\pr{1+\log u}^{d_1}\pr{1+\log\pr{\frac tu}}^{d_2}
 \frac
1u    du
\\ +\int_{0}^\infty \ind{u>1}\ind{t\leq u}u^{p_1-1}
\pr{1+\log u}^{d_1}
\pr{\frac{t}u}^{q_2-1} \frac
1u    du
 +\int_{0}^\infty\ind{u\leq 1 } \ind{t>u}
u^{q_1-1}\pr{\frac{t}u}^{p_2-1}\pr{1+\log\pr{\frac tu}}^{d_2}
du \\
+\int_{0}^\infty\ind{u\leq 1} \ind{t\leq u}  u^{q_1-1}\pr{\frac{t}u}^{q_2-1}
du.
\end{multline}
Then $f_i$, $1\leq i\leq 4$ can be bounded as follows:
\begin{align*}
f_1\pr{t} &=\ind{t>1}t^{p_2-1}\int_{1}^t
u^{p_1-p_2- 1}\pr{1+\log u}^{d_1}\pr{1+\log\pr{\frac tu}}^{d_2}du  \\
&\leq
\ind{t>1}t^{p_2-1}\pr{1+\log
t}^{d_1+d_2}\pr{ \ind{p_1\neq p_2}\frac{t^{p_1-1}-t^{p_2-1}}{ p_1-p_2 }
+\ind{p_1=p_2}t^{p_2-1}\log t}\\
&\leq \ind{t>1}t^{p-1}\pr{1+\log
t}^{d_1+d_2}\pr{ \frac{\ind{p_1\neq p_2} }{\abs{p_1-p_2 }}
+\ind{p_1=p_2} \log t},
\end{align*}
\begin{align*}
 f_2\pr{t}&=t^{q_2-1}\int_{\max\ens{1,t}}^\infty u^{p_1-q_2-1}\pr{1+\log
u}^{d_1}du\\
&=t^{q_2-1}\max\ens{1,t} ^{p_1-q_2}\int_{1}^\infty
v^{p_1-q_2-1}\pr{1+\log\pr{v\max\ens{1,t} }}^{d_1}du\\
 &\leq 2^{d_1-1}I_{q_2-p_1,d_1}t^{q_2-1}\max\ens{1,t} ^{p_1-q_2}
 \pr{1+\log\pr{ \max\ens{1,t}}  }^{d_1}\\
 &=\ind{t\leq 1} 2^{d_1-1}I_{q-p,d_1}t^{q-1}+
 \ind{t>1} 2^{d_1-1}I_{q-p,d_1}t^{p-1}
 \pr{1+\log\pr{ t}  }^{d_1},
\end{align*}
where $I_{q,d}$ is defined as in \eqref{eq:def_de_Iqd},
\begin{align*}
f_3\pr{t} &= t^{p_2-1}\int_{0}^{\min\ens{1,t}}u^{ q_1-p_2-1}\pr{1+\log\pr{\frac
tu}}^{d_2}    du
\\
&\leq 2^{d_2}I_{q_1-p_2,d_2} t^{p_2-1}
  \min\ens{1,t}^{ q_1-p_2}\pr{1+\log\pr{\frac{t}{\min\ens{1,t}}}}^{d_2}\\
&= \ind{t\leq 1}2^{d_2}I_{q-p,d_2} t^{q-1}
+\ind{t>1}2^{d_2}I_{q-p,d_2} t^{p-1}
   \pr{1+\log\pr{t}}^{d_2}
\end{align*}
where we did the substitution $v=\min\ens{1,t}/u$ and
 \begin{equation}
  f_4\pr{t}=\ind{t\leq 1}t^{q_2-1}\int_t^1 u^{q_1-q_2-1}du
  =\ind{t\leq 1}t^{q -1}\frac{1}{\abs{q_1-q_2}}
 \end{equation}
hence
\begin{multline}
 f\pr{t}\leq I_{q-p,d_1+d_2}2^{d_1+d_2}\ind{t\leq 1}t^{q-1}
     \frac{ 1 }{\abs{q_1-q_2}}
   \\
  + I_{q-p,d_1+d_2}2^{d_1+d_2} \ind{t>1} t^{p-1}\pr{1+\log
t}^{d_1+d_2}\pr{ \frac{\ind{p_1\neq p_2} }{\abs{p_1-p_2 }}
+\ind{p_1=p_2} \log t  },
\end{multline}
and
bounding the constant terms by $C\pr{p_1,q_1,p_2,q_2,d_1,d_2}$ gives
\eqref{eq:iteration_Tpq}, which ends the proof of Lemma~\ref{lem:operateur_pq}.

 \end{proof}

\subsection{Tail inequalities}

In order to state the tail inequalities, we will make a use of the operators
$T_{p,q,d}$ given in Definition~\ref{def:operateur_pq}. In order to ease the
notation, we will use the notation $\tail{Y}$ for the tail function of the
non-negative random variable $Y$, that is,
\begin{equation}\label{eq:def_tail}
 \tail{Y}\colon t\mapsto\PP\pr{Y>t}.
\end{equation}
A substitution shows that for a positive $s$ and $t$,
\begin{equation}\label{eq:T_pq_tail_powers}
 T_{p,q,d}\pr{\tail{Y^s}}\pr{t^s}=sT_{sp,sq,d}\pr{\tail{Y}}\pr{t}.
\end{equation}

\subsubsection{Doob's inequality}

\begin{Lemma}\label{lem:Doob}
Let $\pr{Y_n}_{n\geq 1}$ be a non-negative submartingale with respect to 
a filtration $\pr{\Fca_n}_{n\geq 0}$. Then for each positive $x$, 
\begin{equation}
\tail{\max_{1\leq n\leq N}Y_n}\pr{t}\leq 4T_{1,\infty,
0}\pr{\tail{Y_N}}\pr{t/4}.
\end{equation}
\end{Lemma}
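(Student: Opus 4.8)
The plan is to derive the inequality from the classical Doob maximal inequality for non-negative submartingales, combined with a layer-cake decomposition of the truncated first moment of $Y_N$. Throughout I write $Y^{\ast}=\max_{1\leq n\leq N}Y_n$ and $A=\ens{Y^{\ast}>t}$, and I recall that by Definition~\ref{def:operateur_pq} one has $T_{1,\infty,0}\pr{g}\pr{x}=\int_1^\infty g\pr{xu}\D u=\frac1x\int_x^\infty g\pr{s}\D s$ after the substitution $s=xu$.

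First I would record the sharp form of Doob's maximal inequality, namely $t\,\PP\pr{A}\leq\E{Y_N\ind{A}}$; this follows by applying the submartingale property at the bounded stopping time $\tau=\min\ens{n\geq 1:Y_n>t}\wedge N$, using that $A\in\Fca_\tau$, that $Y_\tau>t$ on $A$, and that $\E{Y_N\mid\Fca_\tau}\geq Y_\tau$. Next I would bound $\E{Y_N\ind{A}}$ by splitting the identity $\E{Y_N\ind{A}}=\int_0^\infty\PP\pr{\ens{Y_N>s}\cap A}\D s$ at the threshold $s=t/2$: for $s\leq t/2$ the integrand is at most $\PP\pr{A}$, and for $s>t/2$ it is at most $\PP\pr{Y_N>s}$, so that $\E{Y_N\ind{A}}\leq\frac{t}{2}\PP\pr{A}+\int_{t/2}^\infty\PP\pr{Y_N>s}\D s$. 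Inserting this into $t\,\PP\pr{A}\leq\E{Y_N\ind{A}}$ and absorbing the term $\frac t2\PP\pr{A}$ into the left-hand side yields $\PP\pr{Y^{\ast}>t}\leq\frac2t\int_{t/2}^\infty\PP\pr{Y_N>s}\D s$.

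Finally I would convert this to the stated form. Since the integrand is non-negative, $\int_{t/2}^\infty\PP\pr{Y_N>s}\D s\leq\int_{t/4}^\infty\PP\pr{Y_N>s}\D s$, and by the substitution noted above $4\,T_{1,\infty,0}\pr{\tail{Y_N}}\pr{t/4}=\frac{16}{t}\int_{t/4}^\infty\PP\pr{Y_N>s}\D s$; as $\frac2t\leq\frac{16}{t}$, the bound of the previous paragraph is dominated by $4\,T_{1,\infty,0}\pr{\tail{Y_N}}\pr{t/4}$, which is the assertion. There is no genuine obstacle in this argument: the only points needing a little care are the justification of the sharp Doob inequality through optional stopping at a bounded time and the bookkeeping of constants in the change of variables, and since the constants $4$ and $t/4$ in the statement are deliberately generous, the crude threshold $t/2$ in the layer-cake step is already more than enough.
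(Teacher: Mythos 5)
Your proof is correct and follows essentially the same route as the paper: the classical sharp Doob bound $t\,\PP\pr{\max_{1\leq n\leq N}Y_n>t}\leq\E{Y_N\ind{\max_{1\leq n\leq N}Y_n>t}}$, a split at the level $t/2$ with absorption of the $\tfrac t2\PP\pr{A}$ term, and the identification $T_{1,\infty,0}\pr{\tail{Y_N}}\pr{t/4}=\tfrac4t\int_{t/4}^{\infty}\PP\pr{Y_N>s}\D{s}$ by a change of variables. The only cosmetic difference is that you split the layer-cake integral at $s=t/2$, whereas the paper splits the expectation according to whether $Y_N\leq t/2$; both give the same intermediate bound and the generous constants $4$ and $t/4$ then close the argument.
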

\begin{proof}
By classical Doob's inequality, 
\begin{equation}
t\tail{\max_{1\leq n\leq N}Y_n}\pr{t}
\leq \E{Y_N\ind{\max_{1\leq n\leq N}Y_n>t}}
\end{equation}
and splitting this expectation according to $Y_N\leq t/2$ or not, we derive that 
\begin{equation}
t\tail{\max_{1\leq n\leq N}Y_n}\pr{t}
\leq 2\E{Y_N\ind{Y_N>t/2}}=2\int_0^\infty \PP\pr{Y_N>\max\ens{t/2,s}}\D{s}
\end{equation}
hence 
\begin{equation}
t\tail{\max_{1\leq n\leq N}Y_n}\pr{t}
\leq t\PP\pr{Y_N>t/2}+t\int_{1}^\infty\PP\pr{Y_N>t/2s}\D{s}
\end{equation}
and using $t/4\PP\pr{Y_N>t/2}\leq \int_{t/4}^{t/2}\PP\pr{Y_N>s}\D{s}$ 
completes the proof.
\end{proof}

\subsubsection{Tail inequality for conditional moments}

\begin{Lemma}\label{lem:tail_ineq_cond}
 Let $\pr{\Fca_i}_{i=0}^N$ be an increasing sequence of
 sub-$\sigma$-fields on a probability space $\pr{\Omega,\Fca,\PP}$. Let
$\pr{Y_i}_{i\geq
 1}$ be a sequence of non-negative random variables such that $Y_i$ is
$\Fca_i$-measurable for each $i\geq 1$ and define the random variables
$S_n^{\operatorname{cond}}=\sum_{i=1}^n \E{Y_i\mid\Fca_{i-1}}$ and
$S_n=\sum_{i=1}^n Y_i$.
For each $q\geq 1$ and  $t>0$, the following inequality takes place:
\begin{equation}\label{eq:inegalite_varcib}
 \PP\pr{S_N^{\operatorname{cond}}>t}\leq \kappa\pr{q}
 \int_0^\infty \min\ens{1,v^{q-1}}
 \PP \ens{S_N>tv}dv.
\end{equation}
where $\kappa\pr{q}$ depends only on $q$. In other words,
\begin{equation}\label{eq:inegalite_varcib_Tpq}
 \tail{S_N^{\operatorname{cond}}}\pr{t}\leq
 \kappa\pr{q}T_{1,q,0}\pr{\tail{S_N}}\pr{t},
\end{equation}
where $T_{1,q,0}$ is defined as in \eqref{eq:def_operateur_pq}.
\end{Lemma}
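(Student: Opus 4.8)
The plan is to isolate the contribution of the large values $Y_i$ by a truncation, and to treat the remaining (truncated) predictable compensator by a classical $q$-th moment domination inequality; the truncation is precisely what produces the improvement over the trivial bound $\E{S_N^{\operatorname{cond}}}=\E{S_N}$.

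First I would note that we may assume $\E{S_N}<\infty$, since otherwise $\int_1^\infty\PP\pr{S_N>tv}\D{v}=t^{-1}\E{\pr{S_N-t}^{+}}=+\infty$ and the inequality is trivial. Write $Y_i=\overline{Y}_i+\widetilde{Y}_i$ with $\overline{Y}_i=Y_i\ind{Y_i\leq t}$, $\widetilde{Y}_i=Y_i\ind{Y_i>t}$, and denote by $\overline{S}_n,\overline{S}_n^{\operatorname{cond}},\widetilde{S}_n,\widetilde{S}_n^{\operatorname{cond}}$ the corresponding (conditional) partial sums, so that $S_N^{\operatorname{cond}}=\overline{S}_N^{\operatorname{cond}}+\widetilde{S}_N^{\operatorname{cond}}$ and it suffices to bound $\PP\pr{\overline{S}_N^{\operatorname{cond}}>t/2}$ and $\PP\pr{\widetilde{S}_N^{\operatorname{cond}}>t/2}$. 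For the latter, $\widetilde{S}_N^{\operatorname{cond}}\geq 0$ has expectation $\E{\sum_{i=1}^N Y_i\ind{Y_i>t}}\leq\E{S_N\ind{S_N>t}}$ (if some $Y_i>t$ then $S_N\geq Y_i>t$ and $\sum_i Y_i\ind{Y_i>t}\leq S_N$), and $\E{S_N\ind{S_N>t}}=t\PP\pr{S_N>t}+t\int_1^\infty\PP\pr{S_N>tv}\D{v}$; Markov's inequality together with $\PP\pr{S_N>t}\leq 2^q\int_{1/2}^1 v^{q-1}\PP\pr{S_N>tv}\D{v}$ then makes $\PP\pr{\widetilde{S}_N^{\operatorname{cond}}>t/2}$ of the announced form.

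For the main term I would split once more: $\PP\pr{\overline{S}_N^{\operatorname{cond}}>t/2}\leq\PP\pr{\overline{S}_N>t/4}+\PP\pr{\overline{S}_N^{\operatorname{cond}}>t/2,\ \overline{S}_N\leq t/4}$. Applying Markov's inequality to $\min\ens{\overline{S}_N,t/4}^q$ and using $\overline{S}_N\leq S_N$ gives $\PP\pr{\overline{S}_N>t/4}\leq\pr{4/t}^q\E{\min\ens{S_N,t}^q}$. For the joint event I would introduce the stopping times $\sigma=\inf\ens{n\geq 1:\overline{S}_n>t/4}$ and $\tau=\inf\ens{n\geq 1:\overline{S}_n^{\operatorname{cond}}>t/2}$ (note $\overline{S}_n^{\operatorname{cond}}$ is $\Fca_{n-1}$-measurable, so $\tau$ is a stopping time), set $T=\sigma\wedge\tau\wedge N$, and observe that $\pr{\overline{S}_{n\wedge T}^{\operatorname{cond}}}_n$ is a nonnegative nondecreasing \emph{predictable} sequence, equal to the compensator of $\pr{\overline{S}_{n\wedge T}}_n$, and bounded (on $\ens{T=\tau}$, $\overline{S}_\tau^{\operatorname{cond}}\leq\overline{S}_{\tau-1}^{\operatorname{cond}}+\E{\overline{Y}_\tau\mid\Fca_{\tau-1}}\leq 3t/2$ since $\overline{Y}_\tau\leq t$; otherwise it is $\leq t/2$). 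On $\ens{\overline{S}_N^{\operatorname{cond}}>t/2,\ \overline{S}_N\leq t/4}$ one has $\sigma>N\geq\tau$, hence $T=\tau$ and $\overline{S}_T^{\operatorname{cond}}>t/2$, so
\[
\pr{t/2}^q\PP\pr{\overline{S}_N^{\operatorname{cond}}>t/2,\ \overline{S}_N\leq t/4}\leq\E{\pr{\overline{S}_T^{\operatorname{cond}}}^q}.
\]
Now I would invoke the domination inequality for predictable compensators: if $A$ is a nonnegative nondecreasing predictable sequence with $A_0=0$ that is the compensator of a nonnegative nondecreasing sequence $X$, then $\E{A_N^q}\leq q^q\E{X_N^q}$ for $q\geq 1$. (This follows from $A_N^q=\sum_{n}\pr{A_n^q-A_{n-1}^q}\leq q\sum_n A_n^{q-1}\pr{A_n-A_{n-1}}$; since $A_n^{q-1}$ is $\Fca_{n-1}$-measurable and $\E{A_n-A_{n-1}\mid\Fca_{n-1}}=\E{X_n-X_{n-1}\mid\Fca_{n-1}}$, one gets $\E{A_N^q}\leq q\E{A_N^{q-1}X_N}\leq q\,\E{A_N^q}^{1-1/q}\E{X_N^q}^{1/q}$ by $A_n\leq A_N$ and Hölder.) Applied to $X=\overline{S}_{\cdot\wedge T}$, $A=\overline{S}_{\cdot\wedge T}^{\operatorname{cond}}$ this gives $\E{\pr{\overline{S}_T^{\operatorname{cond}}}^q}\leq q^q\E{\pr{\overline{S}_T}^q}$; and because $\overline{S}$ is nondecreasing with increments $\leq t$, $\overline{S}_T\leq\overline{S}_{\sigma\wedge N}\leq\min\ens{\overline{S}_N,t/4}+t\ind{\overline{S}_N>t/4}$, whence $\E{\pr{\overline{S}_T}^q}\leq 2^q\E{\min\ens{S_N,t}^q}+2^q t^q\PP\pr{S_N>t/4}$. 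Collecting the estimates and using the identities $t^{-q}\E{\min\ens{S_N,t}^q}=q\int_0^1 v^{q-1}\PP\pr{S_N>tv}\D{v}$, $t^{-1}\E{\pr{S_N-t}^{+}}=\int_1^\infty\PP\pr{S_N>tv}\D{v}$, together with $\PP\pr{S_N>t/4}\leq q4^q\int_0^1 v^{q-1}\PP\pr{S_N>tv}\D{v}$, yields \eqref{eq:inegalite_varcib} with $\kappa\pr{q}$ depending only on $q$; \eqref{eq:inegalite_varcib_Tpq} is then just the definition of $T_{1,q,0}$.

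The main obstacle is the truncation step: it is what keeps the overshoot of $\overline{S}$ at the stopping time $T$ below $t$, so that after the domination inequality the right-hand side is genuinely of order $\E{\min\ens{S_N,t}^q}$ and not $\E{S_N^q}$ (the naive bound, valid without truncation, only gives the case $q=1$). One must also be careful that stopping at $T$ preserves both the predictability of $\overline{S}_{\cdot\wedge T}^{\operatorname{cond}}$ and the fact that it is the compensator of $\overline{S}_{\cdot\wedge T}$, and that on the event under consideration $T$ really coincides with $\tau$, which is what guarantees $\overline{S}_T^{\operatorname{cond}}>t/2$.
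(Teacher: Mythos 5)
Your argument is correct, and it takes a genuinely different route from the paper's. The paper follows Neveu's scheme: the first-entrance decomposition of $\ens{S_N^{\operatorname{cond}}>t}$ into the slices $B_n$ yields $\E{\pr{S_N^{\operatorname{cond}}-t}\ind{S_N^{\operatorname{cond}}>t}}\leq\E{S_N\ind{S_N^{\operatorname{cond}}>t}}$, hence the one-step inequality $\PP\pr{S_N^{\operatorname{cond}}>2t}\leq\delta\PP\pr{S_N^{\operatorname{cond}}>t}+\E{\frac{S_N}{t}\ind{S_N>\delta t}}$, which is then iterated along dyadic levels with $\delta=2^{-q-1}$; summing the resulting geometric series is what creates the weight $\min\ens{1,v^{q-1}}$, and no $\el^q$ domination lemma is ever needed. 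You instead truncate the $Y_i$ at level $t$, dispose of $\sum_{i}\E{Y_i\ind{Y_i>t}\mid\Fca_{i-1}}$ by an $\el^1$ Markov bound (via $\sum_i Y_i\ind{Y_i>t}\leq S_N\ind{S_N>t}$), and control the truncated compensator by stopping at $T=\sigma\wedge\tau\wedge N$ and invoking the classical inequality $\E{A_N^q}\leq q^q\E{X_N^q}$ for the predictable compensator of an increasing adapted process, followed by Markov at order $q$. The steps check out: the stopped compensator is indeed predictable and compensates the stopped sum, the overshoot bounds $\overline{S}_T^{\operatorname{cond}}\leq 3t/2$ and $\overline{S}_{\sigma\wedge N}\leq t/4+t$ are exactly what the truncation buys (the first also guarantees $\E{\pr{\overline{S}_T^{\operatorname{cond}}}^q}<\infty$, which you implicitly use when dividing in the H\"older step), and the final identities convert everything into $\int_0^1v^{q-1}\PP\pr{S_N>tv}\D{v}$ and $\int_1^\infty\PP\pr{S_N>tv}\D{v}$, i.e.\ into $T_{1,q,0}\pr{\tail{S_N}}\pr{t}$ up to a constant depending only on $q$. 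In exchange for stopping times and the compensator-domination lemma, your proof exhibits the bound in the transparent Fuk--Nagaev shape $t^{-q}\E{\min\ens{S_N,t}^q}+t^{-1}\E{\pr{S_N-t}^{+}}$; the paper's proof is more elementary in its ingredients (one good-$\delta$ inequality plus a geometric iteration) and closer in spirit to the stopping-time-free philosophy it advocates for the multi-indexed setting, although in this one-dimensional lemma your use of stopping times is perfectly legitimate.
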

\begin{proof}

 We will first give a functional inequation for the tail of $Y_\infty$ in terms
of quantitites of the form $\E{Z\mathbf{1}_{\ens{Z>u}}}$. We will first follow
 the arguments of \cite{zbMATH03538599}, p. 175, but since we are dealing with
 finite sums of random variables, we do not need to introduce stopping times.
 Instead, we will first show that for each positive $t$,

 \begin{equation}\label{eq:control_sum_cond_tail_2}
 \E{\pr{S_N^{\operatorname{cond}}-t}\1{S_N^{\operatorname{cond}}>t}}\leq
\E{S_N\1{S_N^{\operatorname{cond}}>t}}.
\end{equation}
To do so, define for each $n\geq 1$ the events $A_n=\ens{Y_n>t}$ and
$B_{n}=A_n\setminus A_{n-1}$, $n\geq 2$ and $B_1=A_1$. Notice that $B_n$
is $\Fca_{n-1}$-measurable for each $n$ and $\ens{S_N^{\operatorname{cond}}>t}$
is the disjoint union of the sets $B_n$, $1\leq n\leq N$. Letting $Y_0=0$, we
also have that $Y_{n-1}\1{B_n}\leq t\ipr{B_n}$ hence
\begin{equation*}
 \E{\pr{S_N^{\operatorname{cond}}-t}\1{S_N^{\operatorname{cond}}>t}}
=\sum_{n=1}^N\E{\pr{S_N^{\operatorname{cond}}-t}\ipr{B_n}}
\leq \sum_{n=1}^N\E{\pr{S_N^{\operatorname{cond}}-S_{n-1}}\ipr{B_n}}.
\end{equation*}
Since
\begin{equation}
 \E{\pr{S_N^{\operatorname{cond}}-S_{n-1}}\ipr{B_n}\mid \Fca_{n-1} }
 = \E{\sum_{i=n}^N \E{Y_i\mid\Fca_{i-1}}\ipr{B_n}\mid \Fca_{n-1} }
\end{equation}
and $B_n$ is $\Fca_{n-1}$-measurable, one gets
\begin{align}
 \E{\pr{S_N^{\operatorname{cond}}-S_{n-1}}\ipr{B_n}\mid \Fca_{n-1} }
 &= \ipr{B_n}\E{\sum_{i=n}^N \E{Y_i\mid\Fca_{i-1}}\mid \Fca_{n-1} }\\
 &=\ipr{B_n}\E{\sum_{i=n}^N  Y_i\mid\Fca_{n-1}}  \\
 &\leq \ipr{B_n}\E{S_N\mid\Fca_{n-1}}.
\end{align}
Consequently,
\begin{equation}
 \E{\pr{S_N^{\operatorname{cond}}-t}\1{S_N^{\operatorname{cond}}>t}}
\leq \sum_{n=1}^N\E{\ipr{B_n}\E{S_N\mid\Fca_{n-1}}}.
\end{equation}
Using again $\Fca_{n-1}$-measurability of $B_n$ and the fact that
$\ens{S_N^{\operatorname{cond}}>t}$ is the disjoint union of the sets $B_n$,
$1\leq n\leq N$, we get \eqref{eq:control_sum_cond_tail_2}. Now, from the
inequalities
\begin{equation}
 \E{S_N^{\operatorname{cond}}\1{S_N^{\operatorname{cond}}>2t}}\leq
2\E{\pr{S_N^{\operatorname{cond}}-t}\1{S_N^{\operatorname{cond}}>2t}}
 \leq 2\E{\pr{S_N^{\operatorname{cond}}-t}\1{S_N^{\operatorname{cond}}>t}}
\end{equation}
and \eqref{eq:control_sum_cond_tail_2}, we derive that
\begin{equation}
  \E{S_N^{\operatorname{cond}}\1{S_N^{\operatorname{cond}}>2t}}\leq
2\E{S_N\1{S_N^{\operatorname{cond}}>t}}
\end{equation}
and spliting the expectation of $S_N\1{S_N^{\operatorname{cond}}>t}$ over the
set
$\ens{S_N\leq \delta t}$ and its complement, we arrive at the estimate
\begin{equation}\label{eq:delta_inequality}
 \PP\pr{S_N^{\operatorname{cond}}>2t}\leq \delta
\PP\pr{S_N^{\operatorname{cond}}>t}+\E{\frac{S_N}t\1{S_N>t\delta}},
\end{equation}
valid for each $t>0$ and each positive $\delta$. In order to get a bound
like in the right hand side of \eqref{eq:inegalite_varcib}, let us define for a
fixed $s>0$ and $k$ the numbers $a_k:=\PP\pr{S_N^{\operatorname{cond}}>2^st}$,
$b_k=\E{\frac{S_N}{2^ks}\1{S_N>2^ks\delta}}$
and $c_k=\delta^{-k}a_k$. Then \eqref{eq:delta_inequality} applied with
$t=2^ks$ translates as
\begin{equation}
 c_{k+1}=\delta^{-k-1}a_{k+1}\leq \delta^{-k-1}\pr{\delta a_k+b_k}
\end{equation}
hence $c_{k+1}-c_k\leq \delta^{-k-1}b_k$. It follows that
\begin{equation}
 c_n=c_0+\sum_{k=0}^{n-1}\pr{c_{k+1}-c_k}\leq
a_0+\sum_{k=0}^{n-1}\delta^{-k-1}b_k.
\end{equation}
Multiplying by $\delta^n$, this bound translates as
\begin{equation}
 \PP\pr{S_N^{\operatorname{cond}}>2^ns}
 \leq  \delta^n\PP\pr{S_N^{\operatorname{cond}}>t}
 +\sum_{k=0}^{n-1}\delta^{n-k-1}\E{\frac{S_N}{2^ks}\1{S_N>2^st\delta}}
\end{equation}
and applying this to $t=2^ns$ gives
\begin{equation}
 \PP\pr{S_N^{\operatorname{cond}}>t}
 \leq  \delta^n\PP\pr{S_N^{\operatorname{cond}}>t2^{-n}}
+\sum_{k=0}^{n-1}\delta^{n-k-1}\E{\frac{S_N}{2^{k-n}t}\1{S_N>2^kt2^{-n}\delta}}.
\end{equation}
The change of index $j=n-k$ leads to
\begin{equation}
 \PP\pr{S_N^{\operatorname{cond}}>t}
 \leq  \delta^n\PP\pr{S_N^{\operatorname{cond}}>u2^{-n}}
 +\sum_{j=0}^{n}\delta^{j-1}2^j\E{\frac{S_N}{ t}\1{S_N>2^{-j}t \delta}}.
\end{equation}
Letting $n$ going to infinity furnishes the estimate
\begin{equation}
 \PP\pr{S_N^{\operatorname{cond}}>t}
 \leq  \sum_{j=0}^{\infty}\delta^{j}2^j\E{\frac{S_N}{\delta t}\1{S_N>2^{-j}t
\delta}}.
\end{equation}
Choosing $\delta=2^{-q-1}$  gives
\begin{equation}
 \PP\pr{S_N^{\operatorname{cond}}>t}
 \leq  \sum_{j=0}^{\infty} 2^{-qj}\E{\frac{S_N}{ \delta t}\1{S_N>2^{-j}t
\delta}},
\end{equation}
and using the elementary inequality
\begin{equation}
 \sum_{j\geq 0}2^{-jq}\1{Y>2^{-j}}\leq C_q \1{Y>1}+C_q
 Y^{q-1}\1{Y\leq 1}
\end{equation}
with $Y=S_N/\pr{\delta t}$ gives, after having expressed the
expectation as an integral of the tail, the wanted inequality
\eqref{eq:inegalite_varcib}. Then \eqref{eq:inegalite_varcib_Tpq} follows from
a the substitution $v=e^u$. This ends the proof of
Lemma~\ref{lem:tail_ineq_cond}.
\end{proof}

We formulate an inequality of the spirit of Theorem~1.3 in \cite{MR4046858},
except that the term involving the sum of conditional moments of order $p$ is
replaced by the unconditional sum, which will turn out to be more convenient in
our context.

\begin{Proposition}\label{prop:deviation_martingales_power}
Let $1<r\leq 2$ and let $\pr{\B,\norm{\cdot}_{\B}}$ be a separable $r$-smooth
Banach space. For each $p\in (1,r]$, and $q>p$, there exists
a function $f_{p,q}\colon \R_+\to \R_+$ such that if $\pr{D_i}_{i\geq 1}$ is
a  $\B$-valued martingale difference sequence with respect to the filtration
$\pr{\Fca_{i}}_{i\in\Z}$  then for each
$1<p\leq r$,
$q>p$ and  $x>0$, the following inequality holds:
\begin{multline}\label{eq:deviation_martingale}
 \PP\pr{\max_{1\leq n\leq N}\norm{\sum_{i=1 }^n D_{i }  }_{\B} >t
 }\\
  \leq f_{p,q}\pr{C_{p,\B}}
 \int_0^\infty \min\ens{u^{q-1},u^{p-1}} \mathbb P\pr{
 \pr{\sum_{i=1}^N\norm{D_{i}}_{\B}^p}^{1/p}>tu  }
\mathrm{d}u.
\end{multline}
\end{Proposition}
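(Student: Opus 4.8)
The plan is to derive a good-$\lambda$ inequality by means of a stopping-time martingale transform, to iterate it, and finally to compare the resulting geometric sums with the weighted integral on the right-hand side of \eqref{eq:deviation_martingale}. Throughout put
\[
 X=\max_{1\le n\le N}\norm{\sum_{i=1}^n D_{i}}_{\B},\qquad
 Y=\pr{\sum_{i=1}^N\norm{D_{i}}_{\B}^{p}}^{1/p},
\]
and note that the constant $C_{p,\B}$ of \eqref{eq:definition_constant_Banach_space} is finite since $\B$ is $r$-smooth and $p\le r$.

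\emph{Good-$\lambda$ step.} Fix $\lambda>0$, $\beta=4$, $\delta=1$. Let $\mu=\inf\ens{n\ge1\colon\norm{\sum_{i=1}^nD_i}_{\B}>\lambda}$ and $\nu=\inf\ens{n\ge1\colon\norm{D_n}_{\B}>\delta\lambda}$, with $\inf\emptyset=+\infty$. Since $\mu,\nu$ are stopping times, $\ens{\mu<i}$ and $\ens{i\le\nu}$ lie in $\Fca_{i-1}$, so $D_i':=D_i\ind{\mu<i\le\nu}$ is again a $\B$-valued martingale difference sequence; write $T_n'=\sum_{i=1}^nD_i'$. On $\ens{X>\beta\lambda}\cap\ens{\max_{1\le i\le N}\norm{D_i}_{\B}\le\delta\lambda}$ one has $\nu=+\infty$ and $\norm{\sum_{i=1}^{\mu}D_i}_{\B}\le(1+\delta)\lambda$, hence $\max_{1\le n\le N}\norm{T_n'}_{\B}>(\beta-1-\delta)\lambda$ there. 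By Markov's inequality, Doob's $\el^p$-maximal inequality for the submartingale $\pr{\norm{T_n'}_{\B}}_n$, the definition of $C_{p,\B}$, and the bound $\norm{D_i'}_{\B}^{p}\le\norm{D_i}_{\B}^{p}\1{X>\lambda}$ valid for $1\le i\le N$,
\[
 \PP\pr{X>\beta\lambda,\ \max_{1\le i\le N}\norm{D_i}_{\B}\le\delta\lambda}
 \le K\lambda^{-p}\E{\1{X>\lambda}Y^{p}},\qquad K:=\pr{\tfrac{p}{p-1}}^{p}\frac{C_{p,\B}}{(\beta-1-\delta)^{p}};
\]
adding $\PP\pr{\max_i\norm{D_i}_{\B}>\delta\lambda}\le\PP\pr{Y>\delta\lambda}$ yields $\PP\pr{X>\beta\lambda}\le\PP\pr{Y>\delta\lambda}+K\lambda^{-p}\E{\1{X>\lambda}Y^{p}}$.

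\emph{Iteration and comparison.} Splitting $\E{\1{X>\lambda}Y^{p}}\le(\gamma\lambda)^{p}\PP\pr{X>\lambda}+\E{Y^{p}\1{Y>\gamma\lambda}}$ for a parameter $\gamma\in(0,1]$ gives, with $A:=K\gamma^{p}$,
\[
 \PP\pr{X>\beta\lambda}\le A\,\PP\pr{X>\lambda}+\PP\pr{Y>\delta\lambda}+K\lambda^{-p}\E{Y^{p}\1{Y>\gamma\lambda}}.
\]
Pick $\gamma$ small (depending only on $p,q,C_{p,\B}$) so that $A\le\beta^{-q}$; then $A<1$ and $A\beta^{p}\le\beta^{p-q}<1$. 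Iterating this display along $\lambda=t\beta^{-k}$, $k=1,2,\dots$ (the remainder $A^{n}\PP\pr{X>t\beta^{-n}}$ tends to $0$), and then using the layer-cake identity $\lambda^{-p}\E{Y^{p}\1{Y>c\lambda}}=c^{p}\PP\pr{Y>c\lambda}+p\int_c^{\infty}u^{p-1}\PP\pr{Y>\lambda u}\,du$ with $c\in\ens{\delta,\gamma}$, one reduces $\PP\pr{X>t}$ to sums of the two types $\sum_{k\ge1}A^{k-1}\PP\pr{Y>c\beta^{-k}t}$ and $\sum_{k\ge1}A^{k-1}\beta^{kp}\int_{c\beta^{-k}}^{\infty}u^{p-1}\PP\pr{Y>tu}\,du$. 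Because $u\mapsto\PP\pr{Y>u}$ is non-increasing and $A\le\beta^{-q}$, sums of the first type are $\le C\int_0^1 u^{q-1}\PP\pr{Y>tu}\,du$ (compare the integral with the blocks $(c\beta^{-k-1},c\beta^{-k})$). For the second type, interchanging summation and integration, the coefficient $\sum_{k:\,c\beta^{-k}<u}A^{k-1}\beta^{kp}$ is bounded by a constant for $u\ge c/\beta$ (the series converges since $A\beta^{p}<1$) and by $C(u/c)^{q-p}$ for $u<c/\beta$ (decay since $\log_{\beta}(A\beta^{p})\le p-q<0$); together with $u^{p-1}\le(c/\beta)^{p-q}u^{q-1}$ on $(c/\beta,1)$ this gives a bound $C\pr{\int_0^1 u^{q-1}\PP\pr{Y>tu}\,du+\int_1^{\infty}u^{p-1}\PP\pr{Y>tu}\,du}$. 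Since $q>p$, the right-hand side of \eqref{eq:deviation_martingale} equals exactly $\int_0^1 u^{q-1}\PP\pr{Y>tu}\,du+\int_1^{\infty}u^{p-1}\PP\pr{Y>tu}\,du$, so collecting all the constants—each a function of $p,q,C_{p,\B}$ only—into $f_{p,q}\pr{C_{p,\B}}$ proves \eqref{eq:deviation_martingale}.

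The routine part is the good-$\lambda$ step, which only requires checking that $D_i'=D_i\ind{\mu<i\le\nu}$ is a martingale difference sequence. The delicate part will be the bookkeeping in the comparison step: one must choose $\beta,\delta,\gamma$ so that the two convergence requirements $A\le\beta^{-q}$ (controlling the small-$u$ geometric blocks) and $A\beta^{p}<1$ (controlling the tail part produced by the layer-cake term) hold simultaneously, and then match the two geometric sums against the two distinct powers $u^{q-1}$ on $(0,1)$ and $u^{p-1}$ on $(1,\infty)$ of the target weight; the choice $\beta=4$, $\delta=1$ and $\gamma$ so small that $A=K\gamma^{p}\le4^{-q}$ makes both requirements follow from $q>p$. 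I would not route the argument through the martingale deviation inequality of \cite{MR4046858} and Lemma~\ref{lem:tail_ineq_cond}: passing from the conditional sum $\sum_i\E{\norm{D_i}_{\B}^{p}\mid\Fca_{i-1}}$ to the unconditional $\sum_i\norm{D_i}_{\B}^{p}$ that way composes two operators $T_{p,q,d}$ sharing the lower index $p$, which by Lemma~\ref{lem:operateur_pq} costs a spurious factor $(1+\log u)$, hence a weaker weight than the one claimed here.
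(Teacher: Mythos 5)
Your proof is correct, but it follows a genuinely different route from the paper. The paper does not argue from scratch: it quotes the deviation inequality of Theorem~1.3 in \cite{MR4046858} (which involves $\max_i\norm{D_i}_{\B}$ and the conditional sum $\sum_i\E{\norm{D_i}_{\B}^p\mid\Fca_{i-1}}$), then converts the conditional sum into the unconditional one via Lemma~\ref{lem:tail_ineq_cond}, and finally composes the resulting operators through Lemma~\ref{lem:operateur_pq} and the rescaling identity \eqref{eq:T_pq_tail_powers}. You instead prove the statement directly: a Burkholder-type good-$\lambda$ inequality obtained from the predictable transform $D_i'=D_i\ind{\mu<i\le\nu}$ (your verification that $\ens{\mu<i}$ and $\ens{i\le\nu}$ are $\Fca_{i-1}$-measurable is the right point, and the inclusion $\ens{\mu<i}\subset\ens{X>\lambda}$ for $i\le N$ gives exactly the factor $\E{\1{X>\lambda}Y^p}$), followed by geometric iteration along $\lambda=t\beta^{-k}$ and a careful matching of the two geometric series against the weights $u^{q-1}$ on $(0,1)$ and $u^{p-1}$ on $(1,\infty)$; the convergence requirements $A\le\beta^{-q}$ and $A\beta^p<1$ are indeed compatible precisely because $q>p$. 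This is exactly the stopping-time technique that the paper's introduction describes as the classical one-dimensional method (and discards only because it does not generalize to multi-indexed martingales), so your argument is legitimate for this $d=1$ proposition; what it buys is self-containedness (only Doob's maximal inequality and the constant $C_{p,\B}$ from $r$-smoothness are used), while the paper's route buys compatibility with the operator calculus $T_{p,q,d}$ that drives the induction on $d$. Two small points: first, your phrase ``$\nu=+\infty$'' should be ``$\nu>N$'', since only indices up to $N$ are controlled (this changes nothing); second, your closing criticism of the paper's route is inaccurate — in the composition actually used there the lower indices are $0$ and $1$, not both equal to $p$, so Lemma~\ref{lem:operateur_pq} produces no extra factor $1+\log u$, and the weight $\min\ens{u^{q-1},u^{p-1}}$ of \eqref{eq:deviation_martingale} is reached by either method.
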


\begin{proof}[Proof of Proposition~\ref{prop:deviation_martingales_power}]
 Let $\pr{\B,\norm{\cdot}_{\B}}$ be a separable
$r$-smooth Banach space, where $1<r\leq 2$.

According to  Theorem~1.3 in \cite{MR4046858}, for each $1<p\leq r$, each
$q>0$, $t>0$ and each $\B$-valued martingale differences sequence
$\pr{X_i,\Fca_i}_{i\geq 1}$, the following inequality holds:
 \begin{multline}\label{eq:deviation_inequality_non_stationary_martingale_diff}
\tail{  \max_{1\leqslant i\leqslant n}\norm{\sum_{j=1}^i X_j}_{\B} }
\pr{t}
    \leq \frac{2^{q}}{2^q-1}q2^{-p}T_{-\infty,q,0}
    \pr{\tail{\max_{1\leqslant i\leqslant
n}
    \norm{X_i}_{\B}}}  \pr{2^{-1-q/p}C_{p,\B}^{-1/p}t}
\\+\frac{2^q}{2^q-1}q2^{-p}T_{-\infty,q,0}\pr{
   \tail{ \pr{\sum_{i=1}^n
    \E{\norm{X_i}_{\B}^{p}\mid
\Fca_{i-1}}}^{1/p}}}\pr{2^{-1-q/p}C_{p,\B}^{-1/p}t} ,
   \end{multline}
 where $T_{-\infty,q,0}$ is defined as in \eqref{eq:dfn_T_moins_inf_q}
 and $\tail{\cdot}$ by \eqref{eq:def_tail}.
Bounding $\max_{1\leqslant i\leqslant
n}\norm{X_i}_{\B}$ by $\pr{\sum_{i=1}^n \norm{X_i}_{\B}^p}^{1/p}$, we infer that

   \begin{multline}
\label{eq:deviation_inequality_non_stationary_martingale_diff_ter}
   \tail{\max_{1\leqslant i\leqslant n}\norm{\sum_{j=1}^i X_j}_{\B}}\pr{t}
\leq \frac{2^{q-p}}{2^q-1}q
T_{0,q,0}\pr{\tail{\pr{\sum_{i=1}^n\norm{X_i}_{\B}^p}^{1/p}}}\pr{2^{-1-q/p}C_{p,
\B }^ {
-1/p }t}     \\+\frac{2^{q-p}}{2^q-1}q
T_{0,q,0}\pr{\tail{ \sum_{i=1}^n\E{\norm{X_i}_{\B}^p\mid\Fca_{i-1}  }
}}\pr{2^{-p-q}C_{p,\B}^{-1}t^p } =:A_1+A_2
   \end{multline}

 Therefore, we are reduced
to bound the last term of
\eqref{eq:deviation_inequality_non_stationary_martingale_diff_ter} by
an other one involving the tails of $\sum_{i=1}^n \norm{X_i}^p $. This
is done with the help of Lemma~\ref{lem:tail_ineq_cond} used in the
following setting: $Y_i=\norm{X_i}_{\B}$, $\widetilde{q}=q-p$ and
$\widetilde{t}=2^{-p-q}C_{p,\B}^{-1}t^pu^p$ . This allows the bound $A_2$ by
\begin{multline}
K\pr{ p, q,C_{p,\B}}
T_{0,q-p,0}\circ T_{1,q,0}\pr{\tail{ \sum_{i=1}^n \norm{X_i}_{\B}^p
}}\pr{2^{-p-q}C_{p,\B}^{-1}t^p }\\
=K\pr{ p, q,C_{p,\B}}p
T_{0,q-p,0}\circ T_{1,q,0}\pr{\tail{\pr{ \sum_{i=1}^n \norm{X_i}_{\B}^p}^{1/p}
}}\pr{2^{-1-q/p}C_{p,\B}^{-1/p}t },
\end{multline}
where the equality comes from \eqref{eq:T_pq_tail_powers}.
%
%

Then an application of Lemma~\ref{lem:operateur_pq} with $p_1=0$, $q_1=q-p$,
$p_2=1$,$q_2=q$ and $d_1=d_2=0$ gives the wanted result, as
$T_{1,q,0}\pr{g}\pr{x}\geq
T_{0,q,0}\pr{g}\pr{x}$. The proof of
Proposition~\ref{prop:deviation_martingales_power} is complete.
\end{proof}

\subsubsection{Convex ordering}
The following ordering was studied in \cite{MR606989}.
\begin{Definition}
 Let $X$ and $Y$ be two real-valued random variables. We say that $X\conv Y$ if
for each nondecreasing convex $\varphi\colon\R\to\R$ such that the expectations
$\E{\varphi\pr{X}}$ and $\E{\varphi\pr{Y}}$ exist, $\E{\varphi\pr{X}}
\leq \E{\varphi\pr{Y}}$.
\end{Definition}

The point of this ordering is that if $X\conv Y$, one can formulate a tail
inequality for $X$ in terms of tails of $Y$. More precisely, Lemma~2.1 in
\cite{MR4294337} gives that if $X$ and $Y$ are nonnegative random variables
such that $X\conv Y$, then for each $t$,
\begin{equation}\label{eq:conv_ordering_tails_integrales}
 \PP\pr{X>t}\leq \int_1^\infty\PP\pr{Y>tv/4}dv.
\end{equation}
In terms of operators $T_{p,q,d}$ defined as in
Definition~\ref{def:operateur_pq}, this reads
\begin{equation}\label{eq:conv_ordering_tails}
 X\conv Y\Rightarrow \tail{X}\pr{t}\leq T_{1,\infty,0}\pr{\tail{Y}}\pr{t/4}.
\end{equation}

\subsection{Technical tools}

Let $Y$ be a non-negative random variable and denote
$L\pr{x}=1+\abs{\log x}$. The following inequalities will be used in the sequel:
\begin{equation}\label{eq:sum_indicators_2skkd_leq}
 \sum_{k=0}^{+\infty}2^{k\pr{1-r/p}}\pr{k+1}^{d-1}\mathbf 1\ens{Y\leq
2^{k/p}}\leq c_{p,d}
\pr{1+L\pr{Y}}^{d-1}Y^{p-r}, p<r \mbox{ and }
\end{equation}
\begin{equation}\label{eq:sum_indicators_2skkd}
 \sum_{k=0}^{+\infty} 2^{sk} \pr{k+1}^{d-1}\mathbf{1}
 \ens{Y >2^{k/p}   }
\leq c_{p,s,d}
Y^{ps} L\pr{Y}^{d-1}\mathbf{1}\ens{Y>1} .
\end{equation}
 
Inequality \eqref{eq:sum_indicators_2skkd_leq} follows 
from the observation that the sum can be restricted to 
the indexes $k$ bigger than $p\log\pr{Y}/\log 2$.
 
Inequality \eqref{eq:sum_indicators_2skkd} follows from the 
following steps:
\begin{align*}
 \sum_{k=0}^{+\infty} 2^{sk} \pr{k+1}^{d-1}\mathbf{1}
 \pr{Y >2^{k/p}   }&= \sum_{k=0}^{+\infty} 2^{sk} \pr{k+1}^{d-1}\sum_{j\geq k} 
\mathbf{1}
 \ens{2^{j/p}<Y \leq 2^{\pr{j+1}/p}   }\\
 &=\sum_{j=0}^{+\infty} \sum_{k=0}^j 2^{sk} \pr{k+1}^{d-1}
\mathbf{1}
 \ens{2^{j/p}<Y \leq 2^{\pr{j+1}/p}   }\\
 &\leq C_{s,d}\sum_{j=0}^{+\infty}  2^{sj} j^{d-1}
\mathbf{1}
 \ens{2^{j/p}<Y \leq 2^{\pr{j+1}/p}   }\\
 &\leq c_{p,s,d}\sum_{j=0}^{+\infty} Y^{ps} L\pr{Y}^{d-1}
\mathbf{1}
 \ens{2^{j/p}<Y \leq 2^{\pr{j+1}/p}   }
\end{align*}

\end{appendices}

\def\cprime{$'$}
\providecommand{\bysame}{\leavevmode\hbox to3em{\hrulefill}\thinspace}
\providecommand{\MR}{\relax\ifhmode\unskip\space\fi MR }
\providecommand{\MRhref}[2]{%
  \href{http://www.ams.org/mathscinet-getitem?mr=#1}{#2}
}
\providecommand{\href}[2]{#2}

\end{document}